\documentclass{article}


\setlength{\hoffset}{-1in}
\setlength{\voffset}{-1in}

\setlength{\topmargin}{1.5cm}
\setlength{\headheight}{1cm}
\setlength{\headsep}{0.5cm}
\setlength{\topskip}{0cm}

\setlength{\oddsidemargin}{3cm}
\setlength{\evensidemargin}{2.5cm}

\setlength{\footskip}{1cm}

\setlength{\textwidth}{15cm}
\setlength{\textheight}{23cm}

   \renewcommand{\footnote}[1]{
\textsuperscript{
\addtocounter{footnote}{1}
(\thefootnote)
}
\footnotetext{#1}
}

\usepackage{url}
\usepackage{bm}
\usepackage{amsmath}
\usepackage{amsthm}
\usepackage{trfsigns}
\usepackage{wasysym}
\usepackage{amssymb}
\usepackage{amsfonts}
\usepackage{graphicx}  
\usepackage[frenchb,english]{babel}
\usepackage[utf8]{inputenc}
\usepackage[T1]{fontenc}
\usepackage{mathrsfs}
\usepackage{pdfsync}
\usepackage{enumerate}
\usepackage{version}
\usepackage{calc}
\usepackage{subfigure}

 \usepackage{pstricks,pstricks-add,pst-math,pst-xkey}
 \usepackage{float,graphics}
 \usepackage{indentfirst}
 \usepackage[colorlinks=true,linkcolor=blue,urlcolor=blue,citecolor=blue]{hyperref}

 


\newtheorem{prop}{Proposition}

\newtheorem{thm}{Theorem}
\newtheorem{lem}[prop]{Lemma}
\newtheorem{cor}[prop]{Corollary}

\theoremstyle{remark}
\newtheorem{remark}[prop]{Remark}
\newtheorem{notation}[prop]{Notation}

\newtheorem{defi}[prop]{Definition}


\def\mint{\mathop{\,\,\rlap{--}\!\!\!\int}\nolimits}



\def\b{\beta}

\def\v{\varepsilon}

\def\O{\Omega}

\def\p{\partial}

\def\w{\omega}
\def\o{\omega}
\def\R{\mathbb{R}}
\def\C{\mathbb{C}}
\def\S{\mathbb{S}}
\def\Div{{\rm  div}}
\def\deg{ {\rm deg}}
\def\n{\nabla}
\def\dist{{\rm dist}}
\def\tr{{\rm tr}}

\def\weak{\rightharpoonup}
\def\E{\mathcal{E}}

\def\N{\mathbb{N}}
\def\D{\mathbb{D}}
\def\Z{\mathbb{Z}}

\def\K{\mathcal{K}}
\def\I{\mathcal{I}}

\def\1{\textrm{1\kern-0.25emI}}
\def\num{P}
\def\Ring{\mathscr{R}}
\def\H{\mathscr{H}}

\def\grz{\gamma_{\rho,{\bf z},{\bf d}}}
\def\z{\zeta}
\def\dom{\mathcal{D}}
\def\ost{(\o^N)^*}
\def\dst{(\D^N)^*}
\def\omrz{\o_{\rho,{\bf z}}}
\def\deph{\xi_{{\bf z},{\bf d}}}
\def\di{\displaystyle}
\def\Deph{\Psi}
\def\B{\mathcal{B}}
\def\A{\mathcal{A}}

\setcounter{tocdepth}{1}     
\setcounter{secnumdepth}{3}  
\date{}

\title{Explicit expression of the microscopic renormalized energy for a pinned Ginzburg-Landau functional}
\author{Micka\"el {\sc Dos\,Santos}\footnote{Laboratoire d'Analyse et de Mathématiques Appliquées (LAMA). Universit\'e Paris Est-Cr\'eteil, 61 avenue du G\'en\'eral de Gaulle, 94010 Cr\'eteil Cedex, FRANCE}\\ {\tt mickael.dos-santos@u-pec.fr}}
\begin{document}
\maketitle

\begin{abstract}
We get a new expression of the microscopic renormalized energy for a pinned Ginzburg-Landau type energy modeling small impurities. This is done by obtaining a sharp decomposition for the minimal energy of a  Dirichlet type functional with an $L^\infty$-weight.

In particular we get an explicit expression of the microscopic renormalized energy for a circular impurity. We proceed also to the minimization of this renormalized  energy in some cases.
\end{abstract}

\noindent{\bf Keywords.} {Ginzburg-Landau type energy,  pinning,  renormalized energy}\\
\noindent{\bf Mathematics Subject Classification (2000).}  {49K20 \and 35J66 \and 35J50}

\section{Introduction}
\subsection{Main results}
The goal of this article is to give an explicit formula for a {\it microscopic renormalized energy} in the context of the study of a {\it pinned Ginzburg-Landau type energy}. 

This renormalized energy allows to know the location of vorticity defects inside small impurities in an heterogenous superconductor. The microscopic renormalized energy may be defined {\it via } an auxiliary minimization problem involving unimodular maps. 

The study of this auxiliary problem is the heart of this work. The main result of this article is the following theorem:\begin{thm}\label{ThmMicroEstm}
Let
\begin{itemize}
\item $\o\subset\R^2\simeq\C$ be a smooth  bounded simply connected open set s.t. $0\in\o$,
\item $N\in\N^*$ and  $\ost:=\{(z_1,...,z_N)\in\o^N\,|\,z_i\neq z_j\text{ for }i\neq j\}$, 
\item $B\in(0;1)$, $b\in[B;B^{-1}]$ and $\alpha\in L^\infty(\R^2,[B^2;B^{-2}])$ be s.t. $\alpha\equiv b^2$ in $\o$.
\end{itemize}
Then there exist
\begin{itemize}
\item $f\,:\,]R_0,\infty[\to\R^+$ which satisfies $B^2\pi\ln(R)\leq f(R)\leq B^{-2}\pi\ln(R)$ [with $R_0$ sufficiently large], 
\item $\begin{array}{cccc}W^{\rm micro}:&\ost\times\Z^N&\to&\R\\&({\bf z},{\bf d})&\mapsto& W^{\rm micro}({\bf z},{\bf d})\end{array}$
\end{itemize} s.t.  for ${\bf d}=(d_1,...,d_N)\in\Z^N$ and ${\bf z}=(z_1,...,z_N)\in\ost$, when $R\to\infty$ and $\rho\to0^+$, we have
\begin{eqnarray*}
\inf_{\substack{u\in H^1(B(0,R)\setminus\cup_i\overline{B(z_i,\rho)},\S^1)\\\deg_{\p B(z_i,\rho)}(u)=d_i,\,i=1,...,N}}\left\{\dfrac{1}{2}\int_{B(0,R)\setminus\overline{\o}}\alpha|\n u|^2+\frac{b^2}{2}\int_{\o\setminus\cup_i\overline{B(z_i,\rho)}}|\n u|^2\right\}\phantom{aaajsdjdhdjhdjdjkjqhdfkqsdhfq}\\\phantom{aaajsdjdhdjhdjdjkjqhdfkqsdhfq}=\left(\sum_{i=1}^Nd_i\right)^2f(R)+b^2\left(\sum_{i=1}^Nd_i^2\right)|\ln\rho|+W^{\rm micro}({\bf z},{\bf d})+o(1).
\end{eqnarray*}
\end{thm}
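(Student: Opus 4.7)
The plan is to implement the classical ``canonical map plus phase correction'' factorization $u=u_* e^{i\psi}$, in the spirit of Bethuel--Brezis--H\'elein, but adapted to the $L^\infty$ weight $\alpha$. Fix $R_0$ large enough that $\overline{\o}\subset B(0,R_0)$; then the domain $B(0,R)\setminus\bigcup_i\overline{B(z_i,\rho)}$ naturally splits into a microscopic region near each $B(z_i,\rho)$, a bounded mesoscopic region inside $B(0,R_0)$, and a macroscopic annulus $B(0,R)\setminus B(0,R_0)$ on which only the total degree $D=\sum_i d_i$ matters. These three pieces will contribute, respectively, the singular term $b^2\sum d_i^2|\ln\rho|$, the renormalized energy $W^{\mathrm{micro}}$, and the macroscopic term $D^2 f(R)$.

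I would first build a smooth unimodular model $u_*=u_*^{\rho,{\bf z},{\bf d}}$ with $\deg_{\p B(z_i,\rho)}(u_*)=d_i$, locally equal to $\bigl((z-z_i)/|z-z_i|\bigr)^{d_i}$ near each hole and asymptotically equal to $(z/|z|)^{D}$ near infinity, with an $\alpha$-adapted interpolation in $B(0,R_0)\setminus\overline{\o}$. Direct computation of $\frac{1}{2}\int\alpha|\n u_*|^2$ yields the singular asymptotics; in particular, one defines $f(R)$ as the minimum of $\frac{1}{2}\int_{B(0,R)\setminus B(0,R_0)}\alpha|\n v|^2$ over unimodular $v$ with $\deg_{\p B(0,R_0)}(v)=1$, and the two-sided bound $B^2\pi\ln R\le f(R)\le B^{-2}\pi\ln R$ follows from the comparison with the radial map $z/|z|$ together with $B^2\le\alpha\le B^{-2}$.

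The next step is to exploit that, since $u/u_*$ has trivial degree around every boundary component, any admissible competitor $u$ admits a factorization $u=u_* e^{i\psi}$ with $\psi\in H^1$ single-valued. Using $|u_*|\equiv 1$, one has the pointwise identity
\[
|\n u|^2=|\n u_*|^2+|\n\psi|^2+2\,j(u_*)\cdot\n\psi,\qquad j(u_*):=\mathrm{Im}(\overline{u_*}\n u_*),
\]
whence
\[
\tfrac12\!\int\alpha|\n u|^2=\tfrac12\!\int\alpha|\n u_*|^2+\tfrac12\!\int\alpha|\n\psi|^2+\int\alpha\,j(u_*)\cdot\n\psi.
\]
Minimizing in $\psi$ is a well-posed weighted Dirichlet problem: by Lax--Milgram applied to the coercive bilinear form $\int\alpha\,\n\cdot\,\n$ on $H^1$ (modulo constants), there exists a unique minimizer $\psi_*$ solving $\Div(\alpha(\n\psi_*+j(u_*)))=0$ with natural boundary conditions, and its contribution to the energy is uniformly bounded in $(R,\rho)$. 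The matching lower and upper bounds on the infimum then follow by the variational inequality and by inserting $u=u_* e^{i\psi_*}$ respectively.

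It remains to prove that the finite residual $\tfrac12\int\alpha|\n u_*|^2-D^2 f(R)-b^2\sum d_i^2|\ln\rho|$ plus the energy of $\psi_*$ converges, as $R\to\infty$ and $\rho\to 0^+$, to a finite limit depending only on $({\bf z},{\bf d})$; that limit is the desired $W^{\mathrm{micro}}({\bf z},{\bf d})$. The hard part will be precisely this decoupling: because $\alpha$ is only $L^\infty$ outside $\o$, there is no explicit conformal or separation-of-variables formula for the exterior contribution, and convergence must be established variationally, via sharp uniform-in-$(R,\rho)$ estimates on $\psi_*$ on bounded annuli, a removable-singularity argument for bounded $\alpha$-harmonic functions near the $z_i$, and a careful comparison of $j(u_*)$ with the limiting multi-vortex current $\sum_i d_i(x-z_i)^\perp/|x-z_i|^2$. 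The fact that $\alpha\equiv b^2$ on $\o$ is what allows the microscopic contributions $b^2\sum d_i^2|\ln\rho|$ to be cleanly extracted with the correct constant, while the roughness of $\alpha$ outside is confined to $f(R)$ and to the bounded quantity $W^{\mathrm{micro}}$.
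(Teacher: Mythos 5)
Your overall architecture (domain splitting, a canonical unimodular model, the factorization $u=u_*e^{i\psi}$, and a weighted Dirichlet problem for the phase) belongs to the right family of ideas and is close in spirit to the paper's proof. But there are two genuine gaps. First, because your $u_*$ is an ad hoc interpolation rather than a critical point of the weighted energy, the cross term $\int\alpha\,j(u_*)\cdot\n\psi$ does not vanish, and your claim that the contribution of $\psi_*$ is ``uniformly bounded in $(R,\rho)$'' is unsupported: the only obvious bound on the linear functional $L(\psi)=\int\alpha\,j(u_*)\cdot\n\psi$ goes through $\int\alpha|j(u_*)|^2=\int\alpha|\n u_*|^2\sim D^2f(R)+b^2\sum_i d_i^2|\ln\rho|\to\infty$, so controlling $\min_\psi\bigl(\tfrac12\int\alpha|\n\psi|^2+L(\psi)\bigr)$ uniformly requires exploiting the divergence structure of $\alpha\,j(u_*)$, which you have not established for your construction. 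The paper avoids this entirely by taking the model map to be the minimizer of the degree-constrained problem (the ``special solution''): its phase $\psi_0$ satisfies $-\Div(\alpha\n\psi_0)=0$ with $\p_\nu\psi_0=0$ on the whole boundary, so the cross term vanishes identically for every $\varphi\in H^1$ and the decoupling is exact (Proposition \ref{PropMainArgDec}). That exact orthogonality is the engine of the whole argument and is missing from yours.

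Second, and more fundamentally, the existence of the limit defining $W^{\rm micro}({\bf z},{\bf d})$ --- i.e.\ that the $O(1)$ residual actually converges as $R\to\infty$ and $\rho\to0^+$ to a quantity depending only on $({\bf z},{\bf d})$ --- is the entire content of the theorem beyond the two divergent terms, and you explicitly defer it (``the hard part will be precisely this decoupling\dots''). The paper's treatment of this point occupies most of its length: a Cauchy-in-$R$ property for the exterior special solutions obtained via a mean-value and cutoff comparison (Proposition \ref{PropTheArgum}, relying on Lemma \ref{AhhAhhAhh1}), the Lefter--R{\u{a}}dulescu asymptotics for the interior piece, convergence of the two phase-correction problems to limiting problems on $\R^2\setminus\overline{\o}$ and on $\o$ (Propositions \ref{P-MinAuxPahaseExtR} and \ref{PropConvEnInt}), a compatibility condition matching the two traces on $\p\o$ modulo $2\pi\Z$, and matching upper and lower bounds identifying the limit as $b^2W({\bf z},{\bf d})+\min_h\K(h)$. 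Invoking ``sharp uniform estimates'' and ``a removable-singularity argument'' does not substitute for these steps; as written, your proposal establishes the leading-order expansion but not the existence or well-definedness of $W^{\rm micro}$.
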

[Note that the {\it degree} of a function is defined in Section \ref{SectDeg}].
\begin{remark}\label{VeryFirstRk}
\begin{enumerate}
\item The expression of $W^{\rm micro}$ is given in \eqref{DefMicrEnREn}. The map $W^{\rm micro}:\ost\times\Z^N\to\R$ depends only on $\alpha$, $\o$ and $N$. 
\item\label{VeryFirstRk1} The function $f(\cdot)$ is defined by $\displaystyle f(R):=\inf_{\substack{v\in H^1(B_R\setminus \overline{\o},\S^1)\\\deg(v)=1}}\frac{1}{2}\int_{B_R\setminus \overline{\o}}\alpha|\n v|^2$. 
\end{enumerate}
\end{remark}
In the circular case, {\it i.e.}, the set $\o$ is the unit disk $\D$ and $\alpha\equiv1$ outside $\o$, we may obtain an explicit expression for $W^{\rm micro}$.
\begin{prop}\label{Prop-Exp-Circ}
If $\o$ is the unit disk $\D$ and $\alpha=\begin{cases}b^2&\text{if }x\in\o\\1&\text{if }x\notin\o\end{cases}$, then the microscopic renormalized energy with $N$ vortices $({\bf z},{\bf d})=\{(z_1,d_1), ...,(z_N,d_N)\}$ is 
\[
W^{\rm micro}({\bf z},{\bf d})=-b^2\pi\left[\sum_{i\neq j}d_id_j\ln|z_i-z_j|+\dfrac{1-b^2}{1+b^2}\sum_{j=1}^N d_j^2\ln(1-|z_j|^2)+\dfrac{1-b^2}{1+b^2}\sum_{i\neq j}d_id_j\ln|1-z_i\overline{z_j}|\right].
\]
\end{prop}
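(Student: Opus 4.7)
The plan is to solve explicitly, in the circular case, the auxiliary $\S^1$-valued minimization underlying $W^{\rm micro}$, and to read off the formula by matching the resulting energy expansion with the one in Theorem \ref{ThmMicroEstm}.

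\textit{Reduction to a scalar transmission problem.} Writing $u=e^{i\varphi}$ locally, the functional becomes $\tfrac12\int\alpha|\nabla\varphi|^2$, whose Euler--Lagrange equation is $\Div(\alpha\nabla\varphi)=0$ with winding $d_j$ around each $z_j$ and total winding $d:=\sum_j d_j$ on $\partial B(0,R)$. Introduce the stream function $H$ (defined up to an additive constant) by $\nabla^\perp H=\alpha\nabla\varphi$. Then $H$ is harmonic in $\D\setminus\{z_1,\dots,z_N\}$ and in $\R^2\setminus\overline{\D}$, has singularities $H(z)\sim b^2 d_j\ln|z-z_j|$ near $z_j$ and asymptotics $H(z)\sim d\ln|z|$ at infinity, and on $\partial\D$ satisfies the transmission conditions
\[
H_{\rm in}=H_{\rm out},\qquad \partial_\nu H_{\rm in}=b^2\,\partial_\nu H_{\rm out}.
\]
Moreover $\tfrac12\int\alpha|\nabla\varphi|^2=\tfrac12\int\alpha^{-1}|\nabla H|^2$, so the problem is now scalar.

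\textit{Solution by the method of images.} The identity $|e^{i\theta}-z_j|=|1-e^{i\theta}\bar z_j|$ on $\partial\D$ shows that the bounded harmonic extension of $\ln|\cdot-z_j|$ from $\partial\D$ is $\ln|1-z\bar z_j|$ into $\D$ and $\ln|1-z_j/z|$ into $\R^2\setminus\overline{\D}$. This motivates the ansatz
\begin{align*}
H_{\rm in}(z)&=b^2\sum_j d_j\ln|z-z_j|+A\sum_j d_j\ln|1-z\bar z_j|,\\
H_{\rm out}(z)&=d\ln|z|+B\sum_j d_j\ln|1-z_j/z|.
\end{align*}
Continuity of $H$ on $\partial\D$ forces $B=b^2+A$. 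A short Fourier computation on $\partial\D$ (using that $\partial_r\ln|z-z_j|$ is smooth across $\partial\D$ and $\partial_r\ln|1-z\bar z_j|\big|_{r=1}=-\partial_r\ln|1-z_j/z|\big|_{r=1}$), combined with the flux condition, then yields
\[
A=b^2\,\frac{1-b^2}{1+b^2},\qquad B=\frac{2b^2}{1+b^2}.
\]

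\textit{Energy expansion and identification of $W^{\rm micro}$.} By harmonicity of $H$ on each subdomain and the transmission conditions, the inner and outer contributions on $\partial\D$ cancel when $\tfrac12\int\alpha^{-1}|\nabla H|^2$ is integrated by parts, leaving
\[
E=-\frac{1}{2b^2}\sum_j\int_{\partial B(z_j,\rho)}H\,\partial_\nu H\,d\sigma+\frac12\int_{\partial B(0,R)}H\,\partial_\nu H\,d\sigma.
\]
Writing $H(z)=b^2 d_j\ln|z-z_j|+H^{\rm reg}_j(z)$ near $z_j$, with
\[
H^{\rm reg}_j(z_j)=b^2\sum_{i\neq j}d_i\ln|z_j-z_i|+A\sum_i d_i\ln|1-z_j\bar z_i|,
\]
the inner integral at $z_j$ contributes $\pi b^2 d_j^2|\ln\rho|-\pi d_j H^{\rm reg}_j(z_j)+o(1)$, and the outer integral contributes $\pi d^2\ln R+o(1)=d^2 f(R)+o(1)$ (here $f(R)=\pi\ln R$ since $e^{i\theta}$ is the degree-one minimizer outside $\D$ when $\alpha\equiv 1$). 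Separating the diagonal term $\ln|1-z_j\bar z_j|=\ln(1-|z_j|^2)$ from the off-diagonal $\ln|1-z_i\bar z_j|$ ($i\neq j$) in the image sum and substituting $A$ produces exactly the claimed formula. The one delicate point is the determination of $A$ and $B$ through the Fourier/flux argument; the structural prefactor $(1-b^2)/(1+b^2)$ is encoded there, after which the remainder of the calculation is routine bookkeeping.
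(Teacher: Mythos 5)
Your stream-function computation is internally correct: the transmission conditions $H_{\rm in}=H_{\rm out}$ and $\p_\nu H_{\rm in}=b^2\,\p_\nu H_{\rm out}$ on $\S^1$ are the right ones, the image ansatz solves them with $A=b^2\frac{1-b^2}{1+b^2}$ and $B=\frac{2b^2}{1+b^2}$, and the integration by parts does produce $\pi d^2\ln R+\pi b^2\sum_jd_j^2|\ln\rho|-\pi\sum_jd_jH_j^{\rm reg}(z_j)+o(1)$, which matches the claimed formula (with $f(R)=\pi\ln R$, consistent with $\gamma_\infty\equiv0$ in this case). This is a genuinely different route from the paper's: the paper never solves the full problem in one piece, but uses the decomposition $W^{\rm micro}=b^2W({\bf z},{\bf d})+\min_{\deg h=d}\K(h)$ from \eqref{DefMicrEnREn}, quotes Lefter--R{\u{a}}dulescu for the explicit $W$, and evaluates $\min\K$ by a mode-by-mode Fourier minimization of $|c_{0,n}|^2+b^2|c_{0,n}+\gamma_n|^2$ on $\S^1$, which is where the prefactor $\frac{b^2}{1+b^2}$ arises.

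There is, however, a gap. What you compute is the energy of one particular admissible configuration --- the canonical critical point with point singularities at the $z_j$ --- and not the infimum $I(R,\rho,{\bf z},{\bf d})$. Your $H$ does not satisfy the natural boundary conditions of the minimizer of \eqref{AuxMicroPb}: on $\p B(z_j,\rho)$ and on $\p B_R$ one has $\p_\tau H_j^{\rm reg}\neq0$ in general, so the corresponding map $e^{\imath\varphi}$ is merely a competitor. Consequently your expansion, combined with Theorem \ref{ThmMicroEstm}, only yields the upper bound $W^{\rm micro}\le$ (claimed formula). The matching lower bound --- that the canonical configuration is optimal up to $o(1)$ as $\rho\to0$ and $R\to\infty$ --- is exactly the nontrivial Bethuel--Brezis--H\'elein/Lefter--R{\u{a}}dulescu-type step, and in your writeup it is asserted rather than proved. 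Note that the decoupling of Proposition \ref{PropMainArgDec} cannot be applied directly to your $H$, since it requires the reference map to satisfy the Neumann condition on all of $\p\dom$; you would need either to show that the stream function of the true minimizer converges to your $H$ and pass to the liminf, or to route the lower bound through the splitting on $\p\o$ as the paper does.
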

\begin{remark} Section \ref{RadialSection} is dedicated to the case of the weight considered in Proposition \ref{Prop-Exp-Circ}. Proposition \ref{Prop-Exp-Circ} is proved Section \ref{ProofOf-Prop-Exp-Circ}. The minimization of the renormalized energy $W^{\rm micro}$ in this situation is presented in some particular cases Section \ref{PartMinCasPartDisque}.
\end{remark}

Theorem \ref{ThmMicroEstm} may have several applications. For us, the main motivation appears in the study of a pinned Ginzburg-Landau type energy modeling a superconductor with impurities.

 \subsection{Motivations}
\par {\bf Vorticity defects}

The superconductivity phenomenon is an impressive property that appears on some materials called {\it superconductors}. When a superconductor is cooled below a critical temperature,  it carries  electric currents without dissipation [no electrical resistance] and expels magnetic fields from its body [Meissner effect]. 

But if the conditions imposed on the material are too strong [{\it e.g.} a strong magnetic field] then the superconductivity properties may be destroyed: the material has a classical behavior. According to the response of the superconductor to intense magnetic fields, essentially two kinds of superconductors are distinguished. The {\it type I} superconductors are those which pass abruptly to the superconducting state everywhere to the normal state everywhere. The {\it type II} superconductors admits an intermediate state called {\it mixed state}. Namely, for a type II superconductor, there exists  intermediate critical fields $0<H_{c_1}<H_{c_2}$  s.t. if the intensity of the applied field $H$ is less than $H_{c_1}$ then the superconductor is everywhere in the superconducting state. While if $H>H_{c_2}$, then the superconductor is everywhere in the normal state. For the intermediate regime [$H_{c_1}<H<H_{c_2}$] there are "small" areas where the superconductivity is destroyed. While the rest of the sample  is in a superconducting state. See \cite{SS1} for a rigorous and quite complete presentation of these facts.

The areas where the superconductivity is destroyed are called {\it vorticity defects}. In an {\it homogeneous} superconductor, the vorticity defects arrange themselves into triangular Abrikosov lattice. In the presence of current,  vorticity defects may move,  generating dissipation, and destroying zero-resistance state. A way to prevent this motion is to trap the vorticity defects in small areas called {\it pinning sites}.  In practice, pinning sites are often impurities which are present in a non perfect sample or intentionally introduced by irradiation, doping of impurities. \\

 In order to prevent displacements in the superconductor, the key idea is to consider very small impurities. The heart of this article is to answer to the following question: {\it Once the vorticity defects are trapped by small impurities, what is their locations inside the impurities [microscopic location] ?}

\par {\bf The simplified Ginzburg-Landau functional}

The mathematical theory of the {superconductivity} knew a increasing popularity with the pioneering work of Bethuel, Brezis and Hélein \cite{BBH}$\&$\cite{BBH1}. They studied the minimizers of the simplified Ginzburg-Landau energy
\[
\begin{array}{cccc}
E_\v:&H^1(\O,\C)&\to&\R^+\\&u&\mapsto&\di\dfrac{1}{2}\int_\O|\n u|^2+\dfrac{1}{2\v^2}(1-|u|^2)^2
\end{array}
\]
submitted to a Dirichlet boundary condition in the asymptotic $\v\to0$. In their works $\O$ is a simply connected domain which is a cross section of an homogenous superconducting cylinder $\O\times\R$. The number $\v>0$ is a characteristic parameter of the superconductor; the case $\v\to0$ consists in considering {\it extrem type II superconductor}. 

In this simplified model, a map $u$ which minimizes $E_\v$ [under boundary conditions] models the state of the superconductor in the mixed state. The superconducting area is the set $\{|u|\simeq1\}$ and the vorticity defects are the connected components of $\{|u|\simeq0\}$. One may mention that a quantization of the vorticity defects may be done by observing the {\it degree} of a minimizers around the connected components of $\{|u|\simeq0\}$. In this context we say that $z$ is a {\it vortex} of $u$ when it is an isolated zero of $u$ with a non zero degree. Namely, a vorticity defect may be seen as a small disc [with radius of order of $\v$]  centered at a vortex. A Dirichlet boundary condition [with a non zero degree] mimics the application of a magnetic field by forcing the presence of vorticity defects.

A part of the main results of \cite{BBH}  concerns {\it quantization} $\&$ {\it location} of the {vorticity defects} and an asymptotic estimate of the energy of a minimizer. All these results are related with the crucial notion of {\it renormalized energy}. 
\begin{thm}\label{ThmBBH}[Bethuel-Brezis-Hélein]
Let $\O$ be a smooth and bounded simply connected open set and let $g\in C^\infty(\p\O,\S^1)$ with degree $d>0$.

For $\v>0$ we let $u_\v$ be a minimizer of ${E}_\v$ in $H^1_g$. 
Then:
\begin{enumerate}
\item There exist $\v_0>0$ and $C>1$ s.t. for $0<\v<\v_0$:
\begin{itemize}
\item[$\bullet$]  $u_\v$ has exactly $d$ zeros $x_1^\v,...,x_d^\v$ and $\{x\in\O\,|\,|u_\v(x)|\leq1/2\}\subset \cup_i B(x_i^\v,C\v)$; 
[Here $B(z,r)\subset\R^2$ is the open ball with center $z$ and radius $r$]
\item[$\bullet$] Each zero is of degree $1$: $\deg(u_\v,x_i^\v)=1$ for all $i=1,...,d$; 
\item As $\v\to0$, up to extraction of a subsequence, there exist $d$ distinct points $a_1,...,a_d\in\O$ s.t. (up to relabeling of the points $x_i^\v$) we have $x_i^\v\to a_i$.
\end{itemize}

\item There exists a smooth map $W_g:\{(x_1,...,x_d)\in\O^d\,|\, x_i\neq x_j\text{ for }i\neq j\}\to\R$, called {\it renormalized energy}, s.t.
\begin{itemize}
\item[$\bullet$]  $E_\v(u_\v)=\pi d|\ln\v|+W(a_1,...,a_d)+d\gamma+o_\v(1)$ where $\gamma\in\R$ is a universal constant;
\item[$\bullet$]  the set $(a_1,...,a_d)$ minimizes the renormalized energy $W_g$.
\end{itemize}
\end{enumerate}

\end{thm}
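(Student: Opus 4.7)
The plan is to follow the classical Bethuel--Brezis--H\'elein strategy, splitting the analysis into an upper bound driven by an explicit construction, an a priori control of the zero set (``bad discs''), and a matching lower bound that yields both the quantization of degrees and the renormalized energy expansion.

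First I would collect the standard a priori bounds on any minimizer $u_\v$. The maximum principle applied to the Euler--Lagrange equation $-\Delta u_\v = \v^{-2}(1-|u_\v|^2)u_\v$ yields $|u_\v|\le 1$, and a Gagliardo--Nirenberg / blow-up argument gives $|\n u_\v|\le C_{g,\Omega}/\v$. Next, to get the upper energy bound, I would build a competitor $v_\v$ by fixing $d$ distinct points $z_1,\ldots,z_d\in\Omega$, taking $v_\v(x)=\prod_i (x-z_i)/|x-z_i|$ near each $z_i$ on $B(z_i,\rho)\setminus B(z_i,\v)$ truncated linearly to $0$ inside $B(z_i,\v)$, and solving the $\S^1$-valued harmonic map problem with appropriate boundary data on $\Omega\setminus\cup_i\overline{B(z_i,\rho)}$ (matching $g$ on $\p\Omega$); a direct computation then gives
\[
E_\v(u_\v)\le E_\v(v_\v) = \pi d|\ln\v|+W_g(z_1,\ldots,z_d)+d\g+o_\v(1),
\]
which in particular yields the preliminary bound $E_\v(u_\v)\le \pi d|\ln\v|+O(1)$ and $\v^{-2}\int_\Omega(1-|u_\v|^2)^2=O(1)$.

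With these bounds in hand I would run the bad-discs construction: cover $\{|u_\v|\le 1/2\}$ by balls $B(x_i^\v,\v)$, extract (via Vitali) a finite disjoint subfamily $B(x_j^\v,\lambda\v)$ whose number $N_\v$ is controlled independently of $\v$ (using the gradient bound, on each bad disc $u_\v$ spends at least a fixed amount of energy coming from the potential term). Clustering the bad discs that stay at mutual distance $O(\v)$ gives ``final'' disjoint discs $B(b_j^\v,R\v)$; up to subsequences $b_j^\v\to a_j$ with $a_j\in\overline{\Omega}$ pairwise distinct. Since $|u_\v|\ge 1/2$ off these discs, the $\S^1$-valued lifting $u_\v/|u_\v|$ has a well-defined degree $D_j$ around each $a_j$, and $\sum_j D_j = d$ by the boundary condition.

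The heart of the proof is the sharp lower bound. On $\Omega\setminus\cup_j \overline{B(b_j^\v,R\v)}$, where $|u_\v|\ge 1/2$, I would write $u_\v=\varrho_\v e^{\imath\varphi_\v}$ locally and decompose
\[
\tfrac{1}{2}\int |\n u_\v|^2 = \tfrac{1}{2}\int \varrho_\v^2|\n\varphi_\v|^2 + \tfrac{1}{2}\int |\n\varrho_\v|^2,
\]
then compare $\varphi_\v$ with the canonical harmonic map with singularities of degrees $D_j$ at $b_j^\v$. A lower bound of BBH type gives
\[
\tfrac{1}{2}\int_{\Omega\setminus\cup B(b_j^\v,R\v)}|\n u_\v|^2 \ge \pi\sum_j D_j^2\ln\tfrac{1}{R\v} + W_g^{(D_j)}(b_1^\v,\ldots) - o(1).
\]
Inside each bad cluster, a rescaling $v(y)=u_\v(b_j^\v+\v y)$ plus a uniform lower bound for the Ginzburg--Landau energy on $B_R$ with degree $D_j$ on $\p B_R$ (namely $\pi D_j^2\ln R + D_j\g + o_R(1)$, proved independently via a blow-up/Pohozaev argument) gives the contribution of the cores. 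Combining with the upper bound and using $\sum_j D_j = d$ together with the convexity inequality $\sum_j D_j^2 \ge (\sum_j D_j)$ when $D_j\in\Z^*$ (attained iff each $D_j\in\{-1,+1\}$ and in fact $=+1$ given the total), forces the number of clusters to be exactly $d$, each of degree $+1$, none escaping to $\p\Omega$, and each containing only one bad disc (hence $u_\v$ has exactly $d$ zeros); moreover the remaining term is precisely $W_g(a_1,\ldots,a_d)+d\g$. Comparing this with the upper bound evaluated at arbitrary $(z_1,\ldots,z_d)$ yields $W_g(a_1,\ldots,a_d)\le W_g(z_1,\ldots,z_d)$, i.e.\ minimality.

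The main obstacle is the lower bound on $\Omega\setminus\bigcup B(b_j^\v,R\v)$, because the degrees $D_j$ are a priori unknown signed integers and the cluster centers $b_j^\v$ move with $\v$; one has to prove a lower bound that is \emph{uniform} in the configuration and that separates the leading $\pi\sum D_j^2|\ln\v|$ term from the $W_g$ remainder, typically via the dual formulation $\inf \tfrac12\int|\n\varphi|^2=\tfrac12\int|\n\Phi^*|^2$ with $\Phi^*$ the canonical singular harmonic conjugate, together with a careful treatment of the jump set. Once this quantitative lower bound is established, the rigidity ``$D_j=+1$ and $k=d$'' and the identification of the limit as a minimizer of $W_g$ follow by matching with the upper bound.
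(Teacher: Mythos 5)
This theorem is not proved in the paper at all: it is quoted verbatim as background from Bethuel--Brezis--H\'elein \cite{BBH}, so there is no in-paper argument to compare your proposal against. Your outline is the standard BBH strategy (upper bound by explicit test functions, potential bound, bad-disc construction, matching lower bound with the quantization $\sum_j D_j^2\ge\sum_j D_j$), and as a roadmap it is faithful to the classical proof.

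Two points in your rigidity step are stated too quickly to count as complete even at the level of a sketch. First, the inequality $\sum_j D_j^2\ge\sum_j D_j$ with equality iff each $D_j\in\{0,1\}$ does not by itself exclude clusters of degree $D_j=0$; in \cite{BBH} one must separately show that a bad disc (which by construction carries a definite amount of potential energy $\int(1-|u_\v|^2)^2\ge\mu_0\v^2$) cannot have zero degree without contradicting the energy expansion, and this uses more than the leading-order count --- it is where the ``exactly $d$ zeros, each of degree $1$'' statement actually comes from. Second, the claim that no vortex escapes to $\p\O$ needs the quantitative fact that the lower bound degrades (equivalently, that the renormalized energy tends to $+\infty$) as a singularity approaches the boundary; this requires a separate boundary estimate (in \cite{BBH} it is carried out via the conjugate harmonic function and a reflection/comparison argument near $\p\O$), not merely ``matching with the upper bound''. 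With those two ingredients supplied, your plan is the correct one.
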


\begin{remark}\begin{enumerate}
\item In the work \cite{BBH}, the renormalized energy $W_g$ plays an important role. It is defined {\it via} auxiliary minimization problems involving $\S^1$-valued maps: for $N\in\N^*$ and ${\bf z}=(z_1,...z_N)\in\O^N$ s.t. $z_i\neq z_j$ for $i\neq j$, $(d_1,...,d_N)\in\Z^N$ s.t. $\sum d_i=d$ and $\rho\to0^+$ we have:
\begin{eqnarray}\label{MinDeg}
\pi\sum_{i=1}^Nd^2_i|\ln\rho|+W_g({\bf z},{\bf d})&=&\inf_{\substack{u\in H^1(\O\setminus\cup_i\overline{B(z_i,\rho)},\S^1)\\\tr_{\p\O}(u)=g\\\deg_{\p B(z_i,\rho)}(u)=d_i,\,i=1,...,N}}\dfrac{1}{2}\int_{\O\setminus\cup_i\overline{B(z_i,\rho)}}|\n u|^2+o(1)
\\\label{MinDir}
&=&\inf_{\substack{u\in H^1(\O\setminus\cup_i\overline{B(z_i,\rho)},\S^1)\\\tr_{\p\O}(u)=g\\u(z_i+\rho\e^{\imath\theta})=\alpha_i\e^{\imath d_i\theta},\,\alpha_i\in\S^1,\\\,i=1,...,N}}\dfrac{1}{2}\int_{\O\setminus\cup_i\overline{B(z_i,\rho)}}|\n u|^2+o(1).
\end{eqnarray}
In Theorem \ref{ThmBBH}, we have $N=d$ and $d_i=1$ for all $i$ and we wrote $W_g(z_1,...,z_d):=W_g((z_1,1),...,(z_d,1))$.
\item The minimization of $E_\v$ with a Dirichlet boundary condition is not relevant from the physical point of view since the Dirichlet boundary condition is not gauge invariant. In particular, the renormalized energy $W_g$ is not physically relevant. But, in their work, Bethuel-Brezis-Hélein introduced systematic tools and asymptotic estimates to study vorticity defects. 
\end{enumerate}
\end{remark}
\par {\bf The simplified Ginzburg-Landau functional with a pinning term}

One may modify the above model in order to consider a superconducting cylinder with impurities. This is done with the help of a {\it pinning term} $a:\O\to\R^+$ by considering 
 the functional
 \[
\begin{array}{cccc}
E^{\rm pinned}_\v:&H^1(\O,\C)&\to&\R^+\\&u&\mapsto&\di\dfrac{1}{2}\int_\O|\n u|^2+\dfrac{1}{2\v^2}(a^2-|u|^2)^2
\end{array}.
\]
There are a lot of works which deal with a such energy.  Some variants are studied in the literature with the function $a$ which is "smooth" or piecewise constant; independent of $\v$ or depending on $\v$... See the Introduction of \cite{dos2015microscopic} for a more complete presentation of this models. 

In order to present the interpretation of the pinning term, we focus on the case of a pinning term $a:\O\to\R$ piecewise constant. Say, for some $b\in(0;1)$ we have $a(\O)=\{1;b\}$ and $\overline{a^{-1}(\{b\})}$ is a smooth compact subset of $\O$ whose connected components represent the impurities. 

A possible interpretation of a such pinning term is an heterogeneity in temperature. Letting $T_c$ be the critical temperature of the superconductor, if $T_1<T_c$ is the temperature in $a^{-1}(\{1\})$, then $T_b=(1-b^2)T_c+b^2 T_1$ is the temperature in $a^{-1}(\{b\})$. Here the impurities are "heat" areas [note that $T_1<T_2<T_c$]. See Section 2.2 of the Introduction of \cite{TheseDosSantos}.

In order to consider "small" impurities we need to use an $\v$-dependent pinning term [$a_\v:\O\to\{b;1\}$ with $b$ independent of $\v$]. Then we may model {\it shrinking} impurities: the diameter of the connected components of  $\overline{a^{-1}(\{b\})}$ tend to $0$.\\

Essentially three kinds of pinning term may be used. \\

{\bf First kind of pinning term.} The first kind of pinning term are those having a {\bf fixed number of impurities} $\num\in\N^*$:
\begin{itemize}
\item[$\bullet$] the impurities have the same form given by a smooth simply connected open set $\o\subset\R^2,\,0\in\o$;
\item[$\bullet$] the impurities are "centered" at some distinct points $y_1,...,y_\num\in\O$;
\item[$\bullet$] the impurities have size $\lambda=\lambda(\v)\to0$ as $\v\to0$.
\end{itemize}
This kind of pinning term is represented in Figure \ref{Fig.FixedNumber} and it is studied in \cite{Publi3}.
\begin{figure}[H]
\psset{xunit=.45cm,yunit=.450cm,algebraic=true,dotstyle=o,dotsize=3pt 0,linewidth=0.8pt,arrowsize=3pt 2,arrowinset=0.25}
\begin{pspicture*}(-13,-6.5)(13.5,6.2)
{
\rput{168.73}(-6.14,1.53){\psellipse(0,0)(6.91,4.03)}
\rput{-121.33}(-10.22,2.46){\psellipse(0,0)(0.29,0.15)}
\rput{-121.33}(-5.98,-0.6){\psellipse(0,0)(0.29,0.15)}
\rput{-121.33}(-3.43,2.47){\psellipse(0,0)(0.29,0.15)}
\rput{-121.33}(-10,-12){\psellipse(0,0)(0.29,0.15)}
\psline(-10.62,-11.37)(-10.62,-12.57)
\psline(-10.15,2.53)(-0.64,4.5)
\psline(-3.4,2.59)(-0.64,4.5)
\psline(-5.92,-0.54)(-0.64,4.5)
\psline(-9.59,-1.09)(-10.27,-1.98)
\psline(-4.75,-0.62)(4.01,-0.62)
\psline(4.01,3.69)(4.01,-4.92)
\psline(4.01,-4.92)(12.62,-4.92)
\psline(12.62,3.69)(12.62,-4.92)
\psline(4.01,3.69)(12.62,3.69)
\psline(-7.22,0.62)(-4.75,0.62)
\psline(-4.75,-1.85)(-4.75,0.62)
\psline(-7.22,-1.85)(-4.75,-1.85)
\psline(-7.22,-1.85)(-7.22,0.62)
\rput{-121.33}(8.35,-0.55){\psellipse(0,0)(0.87,0.46)}
\psline[linecolor=black]{|-|}(7.67,-1.83)(8.95,-1.83)
\psline[linecolor=black]{|-|}(4.01,-5.6)(12.62,-5.6)
\rput(-10.4,-2.3){ $\textcolor{black}{a_\v=1}$}
\rput(0.6,4.6){$\textcolor{black}{a_\v=b}$}
\rput(8.4,-6){\textcolor{black}{$\simeq1$}}
\rput(8.4,-2.4){\textcolor{black}{$\simeq\lambda$}}
\rput(-5,-4){$\textcolor{black}{\o_\v=\{a_\v=b\}=\cup_{i=1}^\num\o_i^\v},$}
\rput(-5,-5.5){$\textcolor{black}{\o_i^\v:=y_i+\lambda\cdot\o}$}
}

\end{pspicture*}\caption{A pinning term modeling  $\num=3$ small impurities ($\lambda=\lambda(\v)\underset{\v\to0}{\to}0^+)$}\label{Intro.TermedeChevillageManyIncl}\label{Fig.FixedNumber}
\end{figure}

{\bf Second kind of pinning term.} The second kind of pinning term correspond to the  {\bf periodic case}. This case is studied in \cite{Publi4}. For $\delta=\delta(\v)\to0$ as $\v\to0$ we consider a $\delta\times\delta$ squared grid covering $\R^2$. In the center of each cells entirely contained in $\O$ we insert an impurity with size $\lambda\delta$. Here $\lambda$ may be equal to $1$ or $\lambda\to0^+$ as $\v\to0$; it is a factor of dilution when $\lambda\to0$. [See Figure \ref{Intro.FigureTermeChevillage}]

\begin{figure}[H]
\subfigure[A $\delta\times\delta$-periodic pinning term]{
\psset{xunit=0.15cm,yunit=0.15cm,algebraic=true,dotstyle=*,dotsize=3pt 0,linewidth=0.8pt,arrowsize=3pt 2,arrowinset=0.25}
\begin{pspicture*}(-20,-14.3)(36.5,40)
\rput{64.54}(2.39,2.43){\psellipse(0,0)(0.72,0.64)}
\psline(12.45,17.15)(20,30)
\rput(25,31){$\textcolor{black}{a_\v=b}$}
\psline(-7,3)(-10,-5)
\rput(-10,-6){$\textcolor{black}{a_\v=1}$}
\psline(-4,1)(-10,-5)

\rput{64.54}(7.39,2.43){\psellipse(0,0)(0.72,0.64)}
\rput{64.54}(12.39,2.43){\psellipse(0,0)(0.72,0.64)}
\rput{64.54}(2.39,7.43){\psellipse(0,0)(0.72,0.64)}
\rput{64.54}(2.39,12.43){\psellipse(0,0)(0.72,0.64)}
\rput{64.54}(2.39,17.43){\psellipse(0,0)(0.72,0.64)}
\rput{64.54}(7.39,17.43){\psellipse(0,0)(0.72,0.64)}
\rput{64.54}(12.39,17.43){\psellipse(0,0)(0.72,0.64)}
\rput{64.54}(17.39,17.43){\psellipse(0,0)(0.72,0.64)}
\rput{64.54}(17.39,12.43){\psellipse(0,0)(0.72,0.64)}
\rput{64.54}(7.39,12.43){\psellipse(0,0)(0.72,0.64)}
\rput{64.54}(7.39,7.43){\psellipse(0,0)(0.72,0.64)}
\rput{64.54}(12.39,12.43){\psellipse(0,0)(0.72,0.64)}
\rput{64.54}(12.39,7.43){\psellipse(0,0)(0.72,0.64)}
\rput{64.54}(17.39,7.43){\psellipse(0,0)(0.72,0.64)}
\rput{64.54}(17.39,2.43){\psellipse(0,0)(0.72,0.64)}
\rput{-20.94}(7.62,11.61){\psellipse(0,0)(24.83,17.1)}

\rput{64.54}(-2.61,7.43){\psellipse(0,0)(0.72,0.64)}
\rput{64.54}(-7.61,7.43){\psellipse(0,0)(0.72,0.64)}
\rput{64.54}(-2.61,2.43){\psellipse(0,0)(0.72,0.64)}
\rput{64.54}(-2.61,12.43){\psellipse(0,0)(0.72,0.64)}
\rput{64.54}(-2.61,17.43){\psellipse(0,0)(0.72,0.64)}
\rput{64.54}(-2.61,22.43){\psellipse(0,0)(0.72,0.64)}
\rput{64.54}(-7.61,22.43){\psellipse(0,0)(0.72,0.64)}
\rput{64.54}(-7.61,17.43){\psellipse(0,0)(0.72,0.64)}
\rput{64.54}(-7.61,12.43){\psellipse(0,0)(0.72,0.64)}
\rput{64.54}(-12.61,17.43){\psellipse(0,0)(0.72,0.64)}
\rput{64.54}(2.39,22.43){\psellipse(0,0)(0.72,0.64)}
\rput{64.54}(7.39,22.43){\psellipse(0,0)(0.72,0.64)}
\rput{64.54}(12.39,22.43){\psellipse(0,0)(0.72,0.64)}
\rput{64.54}(22.39,12.43){\psellipse(0,0)(0.72,0.64)}
\rput{64.54}(22.39,7.43){\psellipse(0,0)(0.72,0.64)}
\rput{64.54}(27.39,7.43){\psellipse(0,0)(0.72,0.64)}
\rput{64.54}(22.39,2.43){\psellipse(0,0)(0.72,0.64)}
\rput{64.54}(7.39,-2.57){\psellipse(0,0)(0.72,0.64)}
\rput{64.54}(12.39,-2.57){\psellipse(0,0)(0.72,0.64)}
\rput{64.54}(17.39,-2.57){\psellipse(0,0)(0.72,0.64)}
\psline[linestyle=dotted,dash=18pt 18pt](-20,30)(35,30)
\psline[linestyle=dotted,dash=18pt 18pt](35,25)(-20,25)
\psline[linestyle=dotted,dash=18pt 18pt](-20,20)(35,20)
\psline[linestyle=dotted,dash=18pt 18pt](35,15)(-20,15)
\psline[linestyle=dotted,dash=18pt 18pt](35,10)(-20,10)
\psline[linestyle=dotted,dash=18pt 18pt](35,0)(-20,0)
\psline[linestyle=dotted,dash=18pt 18pt](35,-5)(-20,-5)
\psline[linestyle=dotted,dash=18pt 18pt](35,-10)(-20,-10)
\psline[linestyle=dotted,dash=18pt 18pt](35,35)(35,-10)
\psline[linestyle=dotted,dash=18pt 18pt](30,35)(30,-10)
\psline[linestyle=dotted,dash=18pt 18pt](25,35)(25,-10)
\psline[linestyle=dotted,dash=18pt 18pt](20,35)(20,-10)
\psline[linestyle=dotted,dash=18pt 18pt](15,35)(15,-10)
\psline[linestyle=dotted,dash=18pt 18pt](10,35)(10,-10)
\psline[linestyle=dotted,dash=18pt 18pt](5,35)(5,-10)
\psline[linestyle=dotted,dash=18pt 18pt](0,35)(0,-10)
\psline[linestyle=dotted,dash=18pt 18pt](-5,35)(-5,-10)
\psline[linestyle=dotted,dash=18pt 18pt](-10,35)(-10,-10)
\psline[linestyle=dotted,dash=18pt 18pt](-15,35)(-15,-10)
\psline[linestyle=dotted,dash=18pt 18pt](-20,35)(-20,-10)
\psline[linestyle=dotted,dash=18pt 18pt](-20,35)(35,35)
\psline[linestyle=dotted,dash=18pt 18pt](-20,5)(35,5)

\psline(5,-5)(20,-5)
\psline(25,0)(-5,0)
\psline(-10,5)(30,5)
\psline(-10,10)(30,10)
\psline(25,15)(-15,15)
\psline(-15,20)(25,20)
\psline(-10,25)(15,25)

\psline(-15,20)(-15,15)
\psline(-10,5)(-10,25)
\psline(-5,0)(-5,25)
\psline(0,0)(0,25)
\psline(5,-5)(5,25)
\psline(10,25)(10,-5)
\psline(15,25)(15,-5)
\psline(20,-5)(20,20)
\psline(25,0)(25,20)
\psline(30,5)(30,10)

\rput{64.54}(22.39,17.43){\psellipse(0,0)(0.72,0.64)}
\rput(32,-8){\textcolor{black}{$\delta$}}
\psline[linecolor=black]{|-|}(30,-10)(35,-10)
\psline[linecolor=black]{|-|}(35,-10)(35,-5)
\end{pspicture*}
}\hfill\vline\hfill
 \subfigure[$\lambda$ controls the size of an impurity inside a cell]
 {
\psset{xunit=0.15cm,yunit=0.15cm,algebraic=true,dotstyle=*,dotsize=3pt 0,linewidth=0.8pt,arrowsize=3pt 2,arrowinset=0.25}
\begin{pspicture*}(36,2)(60,35)
\psline(38,31)(60,31)
\psline(38,31)(38,9)
\psline(38,9)(60,9)
\psline(60,9)(60,31)
\rput{64.54}(49,20){\psellipse(0,0)(3.6,3.2)}
\psline[linecolor=black]{|-|}(38,7)(60,7)
\rput(49,5){\textcolor{black}{$\delta$}}
\psline[linecolor=black]{|-|}(45.5,16)(52.5,16)
\rput(49,14){\textcolor{black}{$\simeq\lambda\delta$}}
\end{pspicture*}
}
\caption{A periodic [rapidly oscillating] pinning term $(\lambda=\lambda(\v),\delta=\delta(\v)\underset{\v\to0}{\to}0)$}\label{Intro.FigureTermeChevillage}
\end{figure}
This periodic pinning term illustrate the fundamental notion of {\it dilution} when $\lambda\to0$. The {\it diluted impurities} are small impurities with the inter-distance between two impurities which is very larger than their diameters. Note that for the periodic pinning term and $\lambda\equiv1$, the size of the impurities is of order $\delta$ and two neighboring impurities have their inter-distance of order $\delta$. Consequently, despite the impurities are small, when $\lambda\equiv1$, the impurities are not diluted.\\

{\bf Third kind of pinning term.} The notion of diluted impurities leads to the third kind of pinning term: the {\bf general diluted pinning term} [See Figure \ref{RepresentationGenTC}]. This case is studied in \cite{Publi4}. The general diluted pinning term correspond to the presence of diluted impurities possibly having different sizes: $\lambda\delta,\dots,\lambda\delta^\num$ for some $\num\in\N^*$. 

\begin{figure}[h]
\begin{center}
\psset{xunit=.850cm,yunit=.850cm,algebraic=true,dotstyle=o,dotsize=3pt 0,linewidth=0.8pt,arrowsize=3pt 2,arrowinset=0.25}
\begin{pspicture*}(-14.5,-2.2)(-0.225,6.2)
\rput{-178.73}(-8.39,1.97){\psellipse(0,0)(5.72,4.07)}
\rput{0}(-11.7,3.72){\psellipse[fillstyle=hlines](0,0)(0.44,0.24)}
\rput{0}(-11.56,0.42){\psellipse[fillstyle=hlines](0,0)(0.44,0.24)}
\rput{0}(-8.2,-0.76){\psellipse[fillstyle=hlines](0,0)(0.44,0.24)}
\rput{0}(-5.54,2.12){\psellipse[fillstyle=hlines](0,0)(0.44,0.24)}
\rput{0}(-8.66,2.3){\psellipse[fillstyle=hlines](0,0)(0.18,0.09)}
\rput{0}(-6.66,4.84){\psellipse[fillstyle=hlines](0,0)(0.18,0.09)}
\rput{0}(-6.8,0.52){\psellipse[fillstyle=hlines](0,0)(0.18,0.09)}
\rput{0}(-10.82,2.32){\psellipse[fillstyle=hlines](0,0)(0.18,0.09)}
\rput{0}(-12.76,2.8){\psellipse[fillstyle=hlines](0,0)(0.18,0.09)}
\rput{0}(-9.2,3.6){\psellipse[fillstyle=hlines](0,0)(0.18,0.09)}
\rput{0}(-9.3,5.46){\psellipse[fillstyle=hlines](0,0)(0.18,0.09)}
\rput{0}(-6.94,3.1){\psellipse[fillstyle=hlines](0,0)(0.18,0.09)}
\rput{0}(-8.28,3.44){\psellipse[fillstyle=hlines](0,0)(0.18,0.09)}
\rput{0}(-3.88,3.06){\psellipse[fillstyle=hlines](0,0)(0.18,0.09)}
\rput{0}(-4.72,0.96){\psellipse[fillstyle=hlines](0,0)(0.18,0.09)}
\rput{0}(-9.8,1.18){\psellipse[fillstyle=hlines](0,0)(0.18,0.09)}
\psline(-1.94,2.88)(-3.88,3.06)
\psline(-5.54,2.12)(-1.94,2.88)
\psline(-5.38,4.64)(-1.94,4.6)
\rput(-1,2.88){$a_\v=b$}
\rput(-1,4.6){$a_\v=1$}

\psline{|-|}(-7.75,-1.2)(-8.65,-1.2)
\psline{|-|}(-6.6,.2)(-7,.2)

\psline{<->}(-8.55,-.7)(-11.2,0.3)
\rput(-9.5,.25){\small$\geq2\delta$}

\psline{<->}(-10.9,2.4)(-11.35,3.55)
\rput(-10,3){\small$\geq\delta+\delta^2$}

\rput(-8.2,-1.6){\small$\approx\lambda\delta$}
\rput(-6.6,-.2){\small$\approx\lambda\delta^2$}

\psline{<->}(-12.5,-.95)(-11.7,0.25)
\rput(-11.70,-.5){\small$\geq\delta$}
\rput(-1.5,2.8){\psline{<->}(-12.5,0)(-11.4,0)
\rput(-11.70,-.5){\small$\geq\delta^2$}}

\psline{<->}(-7.1,3.1)(-8.15,3.4)
\rput(-7.2,3.7){\small$\geq2\delta^2$}
\end{pspicture*}
\caption{Representation of the general diluted pinning term with $\num=2$}\label{RepresentationGenTC}
\end{center}
\end{figure}

For all these pinning terms, from \cite{Publi3} and \cite{Publi4} we may state the following theorem:
\begin{thm}
Let $\O$ be a smooth and bounded simply connected open set and let $g\in C^\infty(\p\O,\S^1)$ with degree $d>0$. Let $b\in(0;1)$ and $a_\v:\O\to\{b;1\}$ be as in Figure \ref{Fig.FixedNumber} or Figure \ref{Intro.FigureTermeChevillage} or Figure \ref{RepresentationGenTC}. 

Assume that $[\ln(\lambda)]^3/\ln(\v)\to0$ for the first kind of pinning term and $[\ln(\lambda\delta)]^3/\ln(\v)\to0$ for both other cases.

For $\v>0$ we let $u_\v$ be a minimizer of ${E}_\v$ in $H^1_g$. Then there exist $\v_0>0$ and $C>1$ s.t. for $0<\v<\v_0$:
\begin{enumerate}
\item Quantization informations
\begin{itemize}
\item[$\bullet$]  $u_\v$ has exactly $d$ zeros $x_1^\v,...,x_d^\v$ and $\{x\in\O\,|\,|u_\v(x)|\leq b/2\}\subset \cup_i B(x_i^\v,C\v)$; 
\item[$\bullet$] Each zero is of degree $1$: $\deg(u_\v,x_i^\v)=1$ for all $i=1,...,d$; 
\end{itemize}
\item Pinning
\begin{itemize}
\item For the first kind of pinning term: $\cup_i B(x_i^\v,\lambda/C)\subset \{a_\v=b\}$
\item For the second kind of pinning term: $\cup_i B(x_i^\v,\lambda\delta/C)\subset \{a_\v=b\}$
\item For the third kind of pinning term, in order to make a simple presentation of the situation, we assume that there is $\eta_0>0$ [independent of $\v$] s.t. for small $\v$
\begin{itemize}
\item  there are at least $d$ impurities with size $\lambda\delta$: $\o_1^\v,...,\o_d^\v$; 
\item  $\dist(\o_i^\v,\o_j^\v),\dist(\o_i^\v,\p\O)>\eta_0$ for all $i,j\in\{1,...,d\}$, $i\neq j$.
\end{itemize}
Under these extra assumptions we have $\cup_i B(x_i^\v,\lambda\delta/C)\subset \{a_\v=b\}$. In particular the vorticity defects are trapped by the largest impurities.
\end{itemize}
\item Macroscopic location
\begin{itemize}
\item[$\bullet$]  For the first kind of pinning term, the vortices are distributed in the impurities according to the most uniform way. Namely each impurities contain $[d/\num]$ or $[d/\num]+1$ vortices. Here for $x\in\R$, $[x]$ is the integer part of $x$.

The choice between $[d/\num]$ or $[d/\num]+1$ is done {\it via} the minimization of $W_g$.
\item[$\bullet$]  For the second kind of pinning term, each impurities contain at most one vortex. Moreover if $\lambda\to0$, then $(x_1^\v,...,x_d^\v)$ tends to minimize $W_g$ with all the degree equal to $1$. If $\lambda\equiv 1$,  then there is no sharp macroscopic information except some classical vortex/vortex Columbian repulsion and confinent effect for the vortices [repulsion effect of $\p\O$].
\item[$\bullet$] For the third kind of pinning term and under the above assumption on the largest impurities, the vortices are trapped by the largest impurities and each impurities contain at most one vortex. Moreover, the choice of the impurities containing a vortex is related with the minimization of the renormalized energy $W_g$ with all the degree equal to $1$.
\end{itemize}
\item Microscopic location

When $\lambda\to0$, for the three kinds of pinning term, the asymptotic location of a vortices inside an impurities tends to be independent on the Dirichlet boundary condition $g$. The microscopic location of the vortices trapped by an impurity tends to minimize a microscopic renormalized energy $W^{\rm micro}$ which depends only on $b$, the form of the impurity and the number of vortices trapped by the impurity.

In the non diluted case [a periodic pinning term with $\lambda\equiv1$], there is no sharp microscopic informations except some classical vortex/vortex Columbian repulsion and confinent effect for the vortices.
\end{enumerate}
\end{thm}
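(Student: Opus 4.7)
The strategy is a two-scale matched asymptotic expansion of $E_\v(u_\v)$, combining the classical Bethuel--Brezis--Hélein framework at the macroscopic scale with Theorem~\ref{ThmMicroEstm} at the microscopic scale. The guiding observation is that the leading-order cost of a vortex of degree $d_i$ placed where the pinning function equals $a$ is $\pi a^2 d_i^2|\ln\v|$: since $b<1$, a vortex is strictly cheaper inside an impurity, which is precisely what drives the pinning.

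\textbf{Upper bound.} For any admissible configuration ($d$ vortex centers of degree $\pm 1$ distributed among the impurities), build a test map as follows. Away from the impurities, use an $\S^1$-valued harmonic extension of $g$ with prescribed singularities of multiplicity $d_k=\sum_i d_{k,i}$ at the impurity centers, in the spirit of Theorem~\ref{ThmBBH}. Inside each impurity, rescale by $\lambda$ (resp.\ $\lambda\delta$) and insert a near-optimizer of the auxiliary problem of Theorem~\ref{ThmMicroEstm} with parameters $({\bf z}_k,{\bf d}_k)$, matching the two scales on an annular cut-off. Finally, glue the BBH vortex profile of core size $\v/b$ near each vortex, absorbing the $\gamma$-constant of \cite{BBH}. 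Summation yields
\[
E^{\rm pinned}_\v(u_\v) \le \pi b^2 d|\ln\v| + d b^2\gamma + b^2\sum_k W^{\rm micro}({\bf z}_k,{\bf d}_k) + W_g^{\rm macro} + o(1),
\]
where $W_g^{\rm macro}$ is the BBH renormalized energy attached to $g$, evaluated at the impurity centers carrying the merged degrees $d_k$.

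\textbf{Lower bound, quantization, pinning.} Apply the Jerrard--Sandier vortex-ball construction to enclose $\{|u_\v|\le b/2\}$ in a finite family of disjoint discs with controlled degrees and radii. On each ball centered at $x_i^\v$, the localized lower bound reads $\pi a_\v(x_i^\v)^2 d_i^2|\ln\v|-O(1)$. Comparing with the upper bound forces simultaneously that (a) every vortex lies in $\{a_\v=b\}$, and (b) equality holds in $\sum d_i^2 \ge d$, hence there are exactly $d$ vortices, each of degree~$1$. To upgrade the qualitative pinning $x_i^\v\in\{a_\v=b\}$ into the quantitative estimate $B(x_i^\v,\lambda/C)\subset\{a_\v=b\}$ (resp.\ $B(x_i^\v,\lambda\delta/C)$), refine the annular lower bound around each vortex: an annulus straddling $\partial\{a_\v=b\}$ costs strictly more than $\pi a^2|\ln(R/r)|$, quantified via the $L^\infty$-weight analysis of Theorem~\ref{ThmMicroEstm}, so that $\dist(x_i^\v,\partial\{a_\v=b\})\gtrsim\lambda$.

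\textbf{Macroscopic and microscopic location.} Passing to the limit in the matched expansion, the macroscopic partition $(d_k)$ and the selection of the host impurities must minimize $W_g^{\rm macro}$: in the first kind this yields the most uniform partition; in the second kind each impurity contains at most one vortex because the inter-impurity distance is $\delta\to 0$; in the third kind a next-order comparison in $|\ln(\lambda\delta)|$ selects the largest impurities. Finally, rescaling around each host impurity by $1/\lambda$ (resp.\ $1/(\lambda\delta)$), the rescaled maps become admissible competitors in the minimization problem of Theorem~\ref{ThmMicroEstm} on a disc $B(0,R(\v))$ with $R(\v)\to\infty$; optimality combined with the sharp decomposition of that theorem forces the microscopic configuration inside each impurity to minimize $W^{\rm micro}$.

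\textbf{Main obstacle.} The delicate point is the clean decoupling of scales: one must exhibit intermediate radii $\lambda\ll\rho\ll 1$ (resp.\ $\lambda\delta\ll\rho\ll\delta$) along which the matching error is $o(1)$ and along which the energy splits additively into a microscopic $W^{\rm micro}$-contribution and a macroscopic $W_g^{\rm macro}$-contribution with no crossed term. In the general diluted case an additional difficulty is that, at leading order $\pi b^2 d|\ln\v|$, any impurity hosts a vortex at the same cost, so selection of the largest impurities appears only in the next-order correction; controlling this correction is where the hypotheses $[\ln\lambda]^3/\ln\v\to 0$ and $[\ln(\lambda\delta)]^3/\ln\v\to 0$ are used.
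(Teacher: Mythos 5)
You should first be aware that this theorem is not proved in the present paper at all: it is a survey statement that the author explicitly imports from \cite{Publi3} and \cite{Publi4} ("from \cite{Publi3} and \cite{Publi4} we may state the following theorem"), so there is no in-paper argument to compare your proposal against. Judged on its own terms, your sketch does reproduce the right overall architecture of those references (vortex-ball lower bounds, matched upper bounds built from a macroscopic BBH-type map and a rescaled competitor for the auxiliary problem of Theorem \ref{ThmMicroEstm}, comparison of the two expansions). But it omits the first rigorous step on which everything rests, namely the Lassoued--Mironescu substitution $u=U_\v v$ giving $E^{\rm pinned}_\v(U_\v v)=E^{\rm pinned}_\v(U_\v)+F_\v(v)$ with $F_\v$ a \emph{weighted} Ginzburg--Landau functional of weight $U_\v^2\simeq a_\v^2$; without it, your opening claim that a vortex sitting where $a_\v=a$ costs $\pi a^2 d_i^2|\ln\v|$ is only a heuristic, since the gradient term of $E^{\rm pinned}_\v$ carries no weight.

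The more serious gap is in the energy expansion itself. Your upper bound
$\pi b^2 d|\ln\v|+db^2\gamma+b^2\sum_kW^{\rm micro}+W_g^{\rm macro}+o(1)$ skips the intermediate-scale term $\pi\bigl(\sum_k D_k^2\bigr)|\ln\lambda|$ (resp. $|\ln(\lambda\delta)|$), where $D_k$ is the total degree carried by the $k$-th impurity: between the impurity scale and the macroscopic scale the weight is $1$, not $b^2$, and the singularities seen there have the merged degrees $D_k$. Since $|\ln\lambda|\to\infty$, it is precisely this divergent term --- and not the bounded quantity $W_g$ --- that forces $\sum_k D_k^2$ to be minimal, i.e. the "most uniform" distribution for the first kind of pinning term and "at most one vortex per impurity" for the second and third kinds; $W_g$ only breaks the remaining ties, exactly as the statement asserts. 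As written, your argument attributes these conclusions to the minimization of $W_g^{\rm macro}$ alone, which cannot produce them, and the absent $|\ln\lambda|$-term is also where the hypotheses $[\ln\lambda]^3/\ln\v\to0$ and $[\ln(\lambda\delta)]^3/\ln\v\to0$ actually enter (they guarantee that this intermediate term and the error terms of the vortex-ball construction stay of lower order than $|\ln\v|$ while still diverging). Until the three-scale expansion ($\v\ll\lambda$ or $\lambda\delta\ll1$) is written with all its logarithmic layers, the macroscopic-location and pinning-selection parts of the statement do not follow.
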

In \cite{dos2015microscopic} [Section 2] it is explained in detailed the link between the  minimization problem considered in Theorem \ref{ThmMicroEstm} and the microscopic location of vortices in a diluted case. 
\begin{remark}
\begin{enumerate}
\item In \cite{Publi3}, the existence and the role of $W^{\rm micro}$ was established. But its expression was not really explicit.
\item In particular, in the easiest case, the case of an impurity which is a disk containing a unique vortex, we expect that the limiting location is the center of the disc. The expression of $W^{\rm micro}$ obtained in \cite{Publi3} does not allow to get this result easily. This result was obtained from scratch in \cite{dos2015microscopic}.
\item Theorem \ref{ThmMicroEstm} has a more general scope than needed. Indeed:
\begin{enumerate}[i.]
\item In Theorem \ref{ThmMicroEstm}, the points $z_i$'s corresponds to the location of the vortices inside an impurity. The weight $\alpha$ is $a^2_\v$ rescaled at the size of the impurity.  

Essentially, in the diluted case, we have to consider $\alpha=\begin{cases}1&\text{outside }\o\\b^2&\text{in }\o\end{cases}$ where $\o$ is the form of the impurity. 
\item With the help of the above theorem, in order to study $W^{\rm micro}$ in the context of a pinned Ginzburg-Landau type function, we may focus on the case $d_i=1$ for $i\in\{1,...,N\}$. But, since the minimization problem considered in Theorem \ref{ThmMicroEstm} is of its self-interest we treat the case of general degrees.
\end{enumerate}
\item In contrast with the renormalized energy $W_g$, we believe that the microscopic renormalized energy $W^{\rm micro}$ may play a role in a more physical problem.
\item If
\begin{itemize}
\item $\o\subset Y:=(-1/2;1/2]\times(-1/2;1/2]$ is as in Theorem \ref{ThmMicroEstm}, 
\item $\alpha=\begin{cases}1&\text{ in }Y\setminus\o\\b^2&\text{ in }\o\end{cases}$,
\item $\alpha$ is $1$-periodic, 
\end{itemize}
then $W^{\rm micro}$ [given in  Theorem \ref{ThmMicroEstm}] should govern the limiting location of vortices inside an impurity for the periodic non diluted case. But, there is no result which asserts that in the non diluted case the microscopic location of the vortices may be studied with this minimization problem. [Despite we believe that, in the non diluted periodic case, microscopic location of vortices should be given by minimal configuration of $W^{\rm micro}$ with degrees $1$]  

Note that in  \cite{dos2015microscopic} [Section 2] the key use of the dilution property is clearly mentioned.
\end{enumerate}
\end{remark}
\section{Notations and basic properties}
\subsection{General notations}
\subsubsection{Set and number}
 \begin{itemize}
\item[$\bullet$] For $z\in\C$ , $|z|$ is the modulus of $z$, ${\rm Re}(z)\in\R$ is the real part of $z$, ${\rm Im}(z)\in\R$ is the imaginary part of $z$, $\overline{z}$ is the conjugate of $z$. 
\item[$\bullet$] "$\wedge$" stands for the vectorial product in $\mathbb{C}$, \emph{i.e.} $z_1 \wedge z_2= {\rm Im}(\overline{z_1}z_2)$, $z_1,z_2\in\mathbb{C}$.
\item[$\bullet$]For $z\in\C$ and $r>0$, $B(z,r)=\{\tilde{z}\in\C\,|\,|z-\tilde{z}|<r\}$. When $z=0$ we write $B_r:=B(0,r)$.
\item[$\bullet$] For a set $A\subset\R^2\simeq\C$, we let $\overline{A}$ be the adherence of $A$ and $\p A$ be the boundary of $A$ [with respect to the usual Euclidean distance in $\R^2$].
\item[$\bullet$] We denote by $\D=B(0,1)$ the unit open disk and $\S^1=\p\D$ the unit circle.
\end{itemize}
\subsubsection{Asymptotic}
 \begin{itemize}
\item[$\bullet$] In this article $R>1$ is a "large" number and $\rho\in(0;1)$ is a small number. We are essentially interested in the asymptotic $R\to\infty$ and $\rho\to0^+$.
\item[$\bullet$] The notation $o_R(1)$ [resp. $o_\rho(1)$] means a quantity depending on $R$ [resp. $\rho$] which tends to $0$ when $R\to+\infty$ [resp. $\rho\to0^+$]. When there is no ambiguity we just write $o(1)$.
\item[$\bullet$] The notation $o[f(R)]$ [resp. $o[f(\rho)]$] means a quantity $g(R)$ [resp. $g(\rho)$] s.t. $\dfrac{g(R)}{f(R)}\to0$ when $R\to+\infty$ [resp. $\dfrac{g(\rho)}{f(\rho)}\to0$ when $\rho\to0$]. When there is no ambiguity we just write $o(f)$.
\item[$\bullet$] The notation $\mathcal{O}[f(R)]$ [resp. $\mathcal{O}[f(\rho)]$] means a quantity $g(R)$ [resp. $g(\rho)$] s.t. $\dfrac{g(R)}{f(R)}$ [resp. $\dfrac{g(\rho)}{f(\rho)}$] is bounded (independently of the variable) when $R$ is large [resp. $\rho>0$ is small]. When there is no ambiguity we just write $\mathcal{O}(f)$.
\end{itemize}

\subsection{Data of the problem}
Along this article we fix:
\begin{itemize}
\item[$\bullet$]$\o\subset\R^2\simeq\C$ be a smooth  bounded simply connected open set s.t. $0\in\o$,
\item[$\bullet$] $N\in\N^*$, ${\bf d}=(d_1,...,d_N)\in\Z^N$ and we let $ \di d:=\sum_{i=1}^N d_i\in\Z$,
\item[$\bullet$]${\bf z}\in\ost:=\{(z_1,...,z_N)\in\o^N\,|\,z_i\neq z_j\text{ for }i\neq j\}$,
\item[$\bullet$] $B\in(0;1)$, $b\in[B;B^{-1}]$ and $\alpha\in L^\infty(\R^2,[B^2;B^{-2}])$ s.t. $\alpha\equiv b^2$ in $\o$.
\end{itemize}
 We define
\begin{center}
$R_0:=\max\{1;10^2\cdot{\rm diameter}({\o})\}$ and $\di\rho_0:=10^{-2}\cdot\min\left\{1,\min_{i\neq j}|z_i-z_j|,\min_i\dist(z_i,\p\o)\right\}$.
\end{center}

 For $R>R_0$ and $\rho_0>\rho>0$, we denote  $\dom_{R,{\bf z}}=B_R\setminus\cup_{i=1}^N\overline{B(z_i,\rho)}$.

The main purpose of this article is the following minimization problem :

\begin{equation}\label{AuxMicroPb}
I(R,\rho,{\bf z},{\bf d}):=\inf_{u\in \I({R,\rho},{\bf z},{\bf d})}\frac{1}{2}\int_{\dom_{R,{\bf z}}}\alpha|\n u|^2
\end{equation}
where 
\[
\I(R,\rho,{\bf z},{\bf d}):=\{u\in H^1(\dom_{R,{\bf z}},\S^1)\,|\,\deg_{\p B(z_i,\rho)}(u)=d_i,\,i=1,...,N\}.
\]

Namely, we are interested in the asymptotic behavior of $I(R,\rho,{\bf z},{\bf d})$ when $R\to\infty$ and $\rho\to0$. 

Without loss of generality and for simplicity of the presentation, $R>R_0$ is considered as the major parameter writing $\rho=\rho(R)$.

Before going further we recall some basic facts related with this minimization problem.

\subsection{Test functions and degree}\label{SectDeg}
The functions we consider are essentially defined on {\it perforated domains}:
\begin{defi}\label{DefPerfDom}
We say that $\dom\subset\R^2$ is a {\it perforated domain} when $\dom=\O\setminus\cup_{i=1}^\num\overline{\o_i}$ where $  \num \in\N^*$ and $\O,\o_1,...,\o_\num $ are smooth simply connected bounded open sets s.t. for $i\in\{1,...,\num \}$ we have $\overline{\o_i}\subset\O$ and, for $i\neq j $, $\overline{\o_i}\cap\overline{\o_j}=\emptyset$. 

If $\num =1$ we say that $\dom$ is an annular type domain.
\end{defi}

In this article the test functions stand in the standard Sobolev space of order $1$ with complex values modeled on $L^2$, $H^1(\O,\C)$, where $\O$ is a smooth open set. 

We use the standard norm on $H^1(\O,\C)$ :
\[
\|u\|_{H^1}=\left(\int_\O|u|^2+|\n u|^2\right)^{1/2}.
\]

Our main interest is based on unimodular map, {\it i.e}, the test functions are $\S^1$-valued. Thus we focus on maps lying in 
\[
H^{1}(\O,\S^1):=\{u\in H^1(\O,\C)\,|\,|u|=1\,\text{a.e in }\O\}
\]
where $\O$ is a smooth open set.

For $\O\subset\R^2$ a smooth open set, we let $\tr_{\p \O}:H^1(\O,\C)\to H^{1/2}(\p \O,\C)$ be the {\it trace} operator. Here $H^{1/2}(\p \O,\C)$ is the {\it trace space}

Note if $u\in H^{1}(\O,\S^1)$ then $\tr_{\p \O}(u)\in H^{1/2}(\p \O,\S^1)$.

Recall that for $\Gamma\subset\R^2$ a Jordan curve and $g\in H^{1/2}(\Gamma,\S^1)$, the degree (winding number) of $g$ is defined as
\begin{equation}\label{defDegree}
\deg_{\Gamma}(g):=\frac{1}{2\pi}\int_{\Gamma}g\wedge\p_\tau g\in\Z.
\end{equation}
Here\begin{itemize}
\item[$\bullet$]  $\tau$ is the direct unit tangent vector of $\Gamma$ ($\tau=\nu^\bot$ where $\nu$ is the outward normal unit vector of ${\rm int}(\Gamma)$, the bounded open set whose boundary is $\Gamma$),
\item[$\bullet$] $\p_\tau:=\tau\cdot\n$ is the tangential derivative on $\Gamma$. For further use we denote $\p_\nu=\nu\cdot\n$ the normal derivative on $\Gamma$.
\end{itemize}
\begin{remark}
\begin{enumerate}
\item Note that \eqref{defDegree} may be understood {\it via} $H^{1/2}-H^{-1/2}$ duality. Another way to define the degree of an $H^{1/2}(\Gamma)$-map consists in using a density argument [see Appendix in \cite{boutet_georgescu_purice}].
\item Let $\dom=\O\setminus\cup_{i=1}^\num \overline{\o_i}$ be a perforated domain [see Definition \ref{DefPerfDom}]. The orientation with respect to which we calculate degrees is counter-clockwise on $\p\o_i$ and clockwise on $\p\O$.
\item  If $\dom$ is a perforated domain  and if $u\in H^1(\dom,\S^1)$ then we write 
\[
\deg(u):=(\deg_{\p\o_1}(u),...,\deg_{\p\o_\num }(u))\in\Z^\num .
\]
\end{enumerate}
\end{remark}
For the convenience of the reader we recall some basic properties related with the degree.
\begin{prop}[\cite{glcoursep6}]\label{PropRappelDeg}
Let $\Gamma\subset\R^2$ be a Jordan curve and let $\di\dom:=\O\setminus \cup_{i=1}^\num \overline{\o_i}$ be a perforated domain.
\begin{enumerate}
\item\label{PropRappelDeg0} For ${\bf d}\in\Z^\num $ we have 
\[
\E_{\bf d}:=\left\{u\in H^1(\dom,\S^1)\,|\,\deg(u)={\bf d}\right\}\neq\emptyset.
\]
\item\label{PropRappelDeg1} If $u\in H^{1/2}(\Gamma,\S^1)$ then we have
\[
\exists \phi\in H^{1/2}(\Gamma,\R)\text{ s.t. }u=\e^{\imath\phi}\Longleftrightarrow\deg_\Gamma(u)=0.
\]
Moreover, if for $\phi_1,\phi_2\in H^{1/2}(\Gamma,\R)$ we have $\e^{\imath\phi_1}=\e^{\imath\phi_2}$ then $\phi_1-\phi_2=\lambda\in2\pi\Z$.
\item\label{PropRappelDeg2} If $u,v\in H^{1/2}(\Gamma,\S^1)$, then we have 
\[
\deg_\Gamma(uv)=\deg_\Gamma(u)+\deg_\Gamma(v)\text{ and }\deg_\Gamma({1}/{u})=-\deg_\Gamma(u).
\]
\item\label{PropRappelDeg3} If $u\in H^1(\dom,\S^1)$ then $\di\deg_{\p\O}(u)=\sum_{i=1}^\num \deg_{\p\o_i}(u)$.
\item\label{PropRappelDeg4} If $u\in H^1(\dom,\S^1)$ then there exists $\varphi\in H^1(\dom,\R)$ s.t. $u=\e^{\imath\varphi}$ if and only if $\deg_{\p\o_i}(u)=0$ for $i\in\{1,...,\num \}$.
\begin{itemize}
\item[$\bullet$] In particular for $u_0\in H^1(\dom,\S^1)$ we have
\[
\left\{u\in H^1(\dom,\S^1)\,|\,\deg(u)=\deg(u_0)\right\}=\left\{u_0\e^{\imath\varphi}\,|\,\varphi\in H^1(\dom,\R)\right\}.
\]
\item[$\bullet$] Moreover, if for $\varphi_1,\varphi_2\in H^{1}(\dom,\R)$ we have $\e^{\imath\varphi_1}=\e^{\imath\varphi_2}$ then $\varphi_1-\varphi_2=\lambda\in2\pi\Z$.
\end{itemize}
\item\label{PropRappelDeg5}For ${\bf d}=(d_1,...,d_\num )\in\Z^\num $ and $(z_1,...,z_\num )\in\o_1\times\cdots\times\o_\num $ we have 
\[
\prod_{i=1}^\num \left(\dfrac{z-z_i}{|z-z_i|}\right)^{d_i}\in\E_{\bf d}.
\]
Locally, one may define $\theta_i$, a determination of the argument of $z-z_i$ in $\R^2\setminus\{z_i\}$. Note that $\n\theta_i$ is globally defined in $\dom$ and $\displaystyle\sum_{i=1}^\num  d_i\n\theta_i\in L^2(\dom,\R^2)$. 

Therefore, letting $\Theta:=\displaystyle\sum_{i=1}^\num  d_i\theta_i$, we have, for $u\in \E_{\bf d}$, the existence of $\varphi\in H^1(\dom,\R)$ s.t. $u=\e^{\imath(\Theta+\varphi)}$. 

In other words, for $u\in H^1(\dom,\S^1)$ there exists $\psi$ which is locally defined in $\dom$ and whose gradient is in $L^2(\dom,\R^2)$ s.t. $u={\rm e}^{\imath\psi}$.
\end{enumerate}
\end{prop}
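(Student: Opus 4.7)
Plan. The whole proposition revolves around the 1-form $\omega_u:=u\wedge du=u_1\,du_2-u_2\,du_1$. Differentiating $|u|^2=1$ yields $u_1\,du_1+u_2\,du_2=0$, hence $du_1\wedge du_2\equiv 0$ pointwise and $d\omega_u=2\,du_1\wedge du_2=0$ for smooth $\S^1$-valued $u$. The identity persists distributionally for $u\in H^1(\dom,\S^1)$ since $\omega_u\in L^2(\dom,\R^2)$ and smooth unimodular maps are dense in $H^1(\dom,\S^1)$ on a perforated planar domain (there is no topological obstruction). From this, assertion (3) follows from the pointwise identities $\omega_{uv}=\omega_u+\omega_v$ and $\omega_{1/u}=-\omega_u$ integrated over $\Gamma$ (with traces of $\omega_u$ interpreted by $H^{1/2}$--$H^{-1/2}$ duality, as announced in the definition of the degree), while (4) follows from Stokes' theorem applied to the closed form $\omega_u$ on $\dom$, after matching the orientation convention fixed in the remark preceding the proposition.

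For (6) and (1), consider the canonical map $\di u_{\bf d}(z):=\prod_{i=1}^\num \left(\tfrac{z-z_i}{|z-z_i|}\right)^{d_i}$. Since $z_i\in\o_i$, it is smooth on $\overline{\dom}$, and a polar computation around each $z_j$ combined with (3) gives $\deg_{\p\o_j}(u_{\bf d})=d_j$, whence $\E_{\bf d}\neq\emptyset$ for every ${\bf d}\in\Z^\num$. Any local determination $\theta_i$ of $\arg(z-z_i)$ on a simply connected subdomain of $\R^2\setminus\{z_i\}$ has globally defined gradient $\n\theta_i=|z-z_i|^{-2}\bigl(-(y-y_i),\,x-x_i\bigr)$, which belongs to $L^2(\dom,\R^2)$ because $z_i\notin\overline{\dom}$ and $\dom$ is bounded, so $\n\Theta:=\sum_i d_i\n\theta_i\in L^2(\dom,\R^2)$ and the identity $u_{\bf d}={\rm e}^{\imath\Theta}$ holds locally by construction.

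For (2), parametrising the Jordan curve $\Gamma$ by arc length reduces the question to the classical $H^{1/2}$-lifting theorem on $\S^1$: a map in $H^{1/2}(\S^1,\S^1)$ admits an $H^{1/2}(\S^1,\R)$ lift if and only if its degree vanishes (see \cite{boutet_georgescu_purice}). Uniqueness modulo $2\pi$ is clear since $\phi_1-\phi_2\in 2\pi\Z$ a.e.\ on the connected set $\Gamma$, and an $H^{1/2}(\Gamma,\R)$ function with essentially discrete range is a.e.\ constant. For (5), given $u\in H^1(\dom,\S^1)$ with every $\deg_{\p\o_i}(u)=0$, the $L^2$ form $\omega_u$ is closed with zero period on every inner boundary by hypothesis, and on $\p\O$ by (4); a Poincar\'e-type lemma for closed $L^2$ 1-forms on the perforated domain $\dom$ therefore produces $\varphi\in H^1(\dom,\R)$ with $\n\varphi=\omega_u$. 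One checks $\n(u\,{\rm e}^{-\imath\varphi})\equiv 0$, so that $u\,{\rm e}^{-\imath\varphi}$ is a.e.\ equal to a constant $c\in\S^1$ on the connected set $\dom$; absorbing $c$ into $\varphi$ yields $u={\rm e}^{\imath\varphi}$. For an arbitrary $u\in\E_{\bf d}$, apply this lifting to $u\,\overline{u_{\bf d}}\in\E_{\bf 0}$ (of vanishing degrees, by (3) and the computation above) to obtain $u={\rm e}^{\imath(\Theta+\varphi)}$ locally; this delivers the last part of (6) and the parametrisation $\{u_0{\rm e}^{\imath\varphi}\,|\,\varphi\in H^1(\dom,\R)\}$ of $\E_{\deg(u_0)}$, with uniqueness of $\varphi$ modulo $2\pi\Z$ as above.

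The only genuinely non-elementary ingredient is the $H^{1/2}$-lifting theorem on $\S^1$ invoked in (2), which I would quote from the literature rather than reconstruct. Everything else reduces to (i) the pointwise closedness and the algebraic identities satisfied by $\omega_u$ for smooth unimodular $u$, passed to $H^1$ by density, (ii) Stokes' theorem on $\dom$, (iii) the explicit polar computation of $\deg_{\p\o_j}(u_{\bf d})$, and (iv) a Poincar\'e-type lemma for closed $L^2$ 1-forms on a perforated planar domain.
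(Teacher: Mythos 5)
The paper does not prove this proposition: it is recalled from \cite{glcoursep6} as a list of standard facts ("for the convenience of the reader"), so there is no in-paper argument to measure yours against. Your reconstruction is correct and is essentially the canonical one: everything is driven by the closed $L^2$ one-form $u\wedge\n u$, Stokes' theorem for assertion 4, the explicit map $\prod_i\left(\frac{z-z_i}{|z-z_i|}\right)^{d_i}$ for assertions 1 and 6, and the two lifting statements obtained respectively from the $H^{1/2}(\S^1,\S^1)$ lifting theorem and from a Poincar\'e lemma for closed $L^2$ forms with vanishing periods, followed by the observation that $u{\rm e}^{-\imath\varphi}$ has vanishing gradient. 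Two points deserve slightly more care than you give them: (i) for assertion 3 the identity $(uv)\wedge\p_\tau(uv)=u\wedge\p_\tau u+v\wedge\p_\tau v$ cannot be read pointwise when $u,v$ are only $H^{1/2}$, since $\p_\tau u$ only lives in $H^{-1/2}$; one proves it for smooth maps and passes to the limit using the density of $C^\infty(\Gamma,\S^1)$ in $H^{1/2}(\Gamma,\S^1)$ and the continuity of $g\mapsto\int_\Gamma g\wedge\p_\tau g$ under $H^{1/2}$ convergence --- exactly the density route the paper alludes to after \eqref{defDegree}; (ii) "parametrising $\Gamma$ by arc length" presupposes rectifiability, which a bare Jordan curve does not grant, but which is harmless here because every curve in play is the boundary of a smooth domain. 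With these caveats, and granting the three quoted ingredients (density of smooth $\S^1$-valued maps in $H^1$ of a planar domain, the $H^{1/2}$ lifting theorem, the $L^2$ Poincar\'e lemma on a multiply connected domain), the argument is complete at the level of detail appropriate for a recalled proposition.
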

\begin{notation}
\begin{enumerate}
\item It is important to note that for $u\in H^1(\dom,\S^1)$, the function $\psi$ given by Proposition \ref{PropRappelDeg}.\ref{PropRappelDeg5} is locally defined modulo $2\pi$ while $\n\psi$ is   globally well defined. Namely $\n\psi=u\wedge\n u$.
\item For simplicity of the presentation, when there is no ambiguity, we may omit the dependance on the Jordan curve in the notation of the degree. For example:
\begin{itemize}
\item[$\bullet$] if $\Gamma$ is a Jordan curve and if $h\in H^{1/2}(\Gamma,\S^1)$, then we may write $\deg(h)$ instead of $\deg_\Gamma(h)$.
\item[$\bullet$] If $\dom=\O\setminus\overline{\o}$ is an annular type domain and $u\in H^1(\dom,\S^1)$, then $\deg_{\p\O}(u)=\deg_{\p\o}(u)$. Consequently, without ambiguity, we may write $\deg(u)$ instead of $\deg_{\p\O}(u)$ or $\deg_{\p\o}(u)$.
\end{itemize}
\end{enumerate}
\end{notation}
\subsection{Minimization problems}
One of the main issue in this article is the study of minimization problems of weighted Dirichlet functionals with prescribed degrees :
\begin{equation}\label{EqGenFormMinPb}
\inf_{u\in \E_{\bf d}}\frac{1}{2}\int_\dom\alpha|\n u|^2
\end{equation}
where
\begin{itemize}
\item[$\bullet$] $\dom:=\O\setminus\cup_{i=1}^\num \overline{\o_i}$ is a perforated domain as in Definition \ref{DefPerfDom},
\item[$\bullet$] ${\bf d}=(d_1,...,d_\num )\in\Z^\num $,
\item[$\bullet$] $\E_{\bf d}:=\{u\in H^1(\dom,\S^1)\,|\,\deg_{\p\o_i}(u)=d_i\text{ for }i\in\{1,...,\num \}\}$,
\item[$\bullet$] $\alpha\in L^\infty(\dom;[B^2;B^{-2}])$, $B\in(0;1)$.
\end{itemize}
Problem \eqref{EqGenFormMinPb} admits solutions which are  unique up to a constant rotation. Namely we have the following proposition:
\begin{prop}\label{PropExistResultMainTypPB}
Minimisation problem \eqref{EqGenFormMinPb} admits solutions. Moreover if $u$ is a solution of \eqref{EqGenFormMinPb} then $v$ is a solution of \eqref{EqGenFormMinPb} if and only if there exists $\lambda\in\S^1$ s.t. $v=\lambda u$. 

Moreover a minimizer $u_{\bf d}$ solves
\begin{equation}\label{EqGenEq}
\begin{cases}
-\Div(\alpha\n u_{\bf d})=\alpha u_{\bf d}|\n u_{\bf d}|^2&\text{ in }\dom
\\
\p_\nu u_{\bf d}=0&\text{ on }\p\dom
\end{cases}.
\end{equation}
\end{prop}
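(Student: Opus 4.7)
\textbf{Proof plan for Proposition \ref{PropExistResultMainTypPB}.}

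The plan is to convert the constrained minimization over $\mathcal{E}_{\bf d}$ into an unconstrained convex quadratic problem over a scalar potential. Using Proposition \ref{PropRappelDeg}.\ref{PropRappelDeg5}, fix the reference map $u_0:=\prod_{i=1}^\num\left(\frac{z-z_i}{|z-z_i|}\right)^{d_i}\in\mathcal{E}_{\bf d}$, with locally defined phase $\Theta=\sum_i d_i\theta_i$ whose gradient $\nabla\Theta\in L^2(\dom,\R^2)$ is globally well-defined. By Proposition \ref{PropRappelDeg}.\ref{PropRappelDeg4}, any $u\in\mathcal{E}_{\bf d}$ can be written as $u=u_0\e^{\imath\varphi}$ with $\varphi\in H^1(\dom,\R)$ (unique modulo $2\pi\Z$). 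A direct computation using $\nabla u_0=\imath u_0\nabla\Theta$ locally gives $|\nabla u|^2=|\nabla\Theta+\nabla\varphi|^2$, so problem \eqref{EqGenFormMinPb} is equivalent to
\[
\inf_{\varphi\in H^1(\dom,\R)}J(\varphi),\qquad J(\varphi):=\frac{1}{2}\int_\dom\alpha|\nabla\Theta+\nabla\varphi|^2.
\]

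\textbf{Existence and uniqueness.} The functional $J$ is invariant under $\varphi\mapsto\varphi+c$, so I restrict to $\varphi$ with zero mean on $\dom$. Since $\alpha\geq B^2>0$, the Poincaré-Wirtinger inequality gives coercivity on this subspace. Take a minimizing sequence $(\varphi_n)$; it is bounded in $H^1$, so (up to extraction) $\varphi_n\rightharpoonup\varphi_*$ weakly in $H^1$. By weak lower semicontinuity of the $L^2$-norm applied to $\nabla\Theta+\nabla\varphi_n$ (with fixed $\nabla\Theta$), $J(\varphi_*)\leq\liminf J(\varphi_n)$, giving a minimizer. For uniqueness, if $\varphi_1,\varphi_2$ both minimize, strict convexity of $y\mapsto|y|^2$ together with the weight $\alpha\geq B^2>0$ forces $\nabla\Theta+\nabla\varphi_1=\nabla\Theta+\nabla\varphi_2$ a.e., hence $\nabla(\varphi_1-\varphi_2)=0$. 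Since $\dom$ is connected, $\varphi_1-\varphi_2$ is a constant $c\in\R$, so $u_1=\e^{\imath c}u_2$ with $\e^{\imath c}\in\S^1$. This proves the first two assertions, with the multiplicative $\S^1$-action accounting exactly for the additive constant ambiguity.

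\textbf{Euler--Lagrange equation.} For $\psi\in C^\infty(\overline{\dom},\R)$ and $t\in\R$, the perturbation $u_t:=u_{\bf d}\e^{\imath t\psi}$ stays in $\mathcal{E}_{\bf d}$ (the degrees are unchanged since $\psi$ is single-valued). Differentiating $t\mapsto\frac{1}{2}\int\alpha|\nabla u_t|^2$ at $t=0$, using $u_{\bf d}\wedge\nabla u_{\bf d}\in L^2$, yields the weak formulation
\[
\int_\dom\alpha\,(u_{\bf d}\wedge\nabla u_{\bf d})\cdot\nabla\psi=0\qquad\forall\,\psi\in C^\infty(\overline{\dom},\R),
\]
that is $-\Div[\alpha(u_{\bf d}\wedge\nabla u_{\bf d})]=0$ in $\dom$ together with the natural Neumann condition $\alpha(u_{\bf d}\wedge\nabla u_{\bf d})\cdot\nu=0$ on $\p\dom$. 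To convert this into the stated form, I use that $|u_{\bf d}|=1$ implies $u_{\bf d}\cdot\nabla u_{\bf d}=0$ pointwise, so $\nabla u_{\bf d}=(u_{\bf d}\wedge\nabla u_{\bf d})\,u_{\bf d}^\perp$ and $|\nabla u_{\bf d}|^2=|u_{\bf d}\wedge\nabla u_{\bf d}|^2$. Expanding $\Div(\alpha\nabla u_{\bf d})$ componentwise and using $\p_j u_{\bf d}^\perp=-(u_{\bf d}\wedge\p_j u_{\bf d})\,u_{\bf d}$ gives
\[
\Div(\alpha\nabla u_{\bf d})=\Div[\alpha(u_{\bf d}\wedge\nabla u_{\bf d})]\,u_{\bf d}^\perp-\alpha|\nabla u_{\bf d}|^2\,u_{\bf d},
\]
and the weak formulation reduces this to $-\Div(\alpha\nabla u_{\bf d})=\alpha|\nabla u_{\bf d}|^2u_{\bf d}$ in $\dom$, together with $\p_\nu u_{\bf d}=(u_{\bf d}\wedge\p_\nu u_{\bf d})u_{\bf d}^\perp=0$ on $\p\dom$.

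\textbf{Main obstacle.} The only delicate point is justifying the algebraic identity turning the weak Neumann-type formulation into the ``vectorial'' equation \eqref{EqGenEq}: one must be careful to interpret $\nabla u_{\bf d}$ only almost everywhere (since $u_{\bf d}\in H^1$, not $C^1$) and check that the pointwise decomposition $\nabla u_{\bf d}=(u_{\bf d}\wedge\nabla u_{\bf d})u_{\bf d}^\perp$, which rests on the identity $u\cdot\nabla u=0$ obtained by differentiating $|u|^2=1$, is valid in the distributional sense. Once this is granted, the passage from the scalar weak equation to \eqref{EqGenEq} is straightforward.
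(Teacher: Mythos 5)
Your proposal is correct, and its backbone --- lifting $u=u_0\e^{\imath\varphi}$ against the reference map $u_0=\prod_i\left(\frac{z-z_i}{|z-z_i|}\right)^{d_i}$ and reducing \eqref{EqGenFormMinPb} to a scalar weighted Dirichlet problem for the dephasing $\varphi$ --- is exactly the mechanism the paper uses. The two proofs diverge only in emphasis. For existence the paper minimizes directly over $\E_{\bf d}$, invoking its closedness under weak $H^1$ convergence, while you minimize the equivalent scalar functional $J(\varphi)=\frac12\int_\dom\alpha|\n\Theta+\n\varphi|^2$ after a Poincar\'e--Wirtinger normalization; these are interchangeable. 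For uniqueness the paper first establishes the Euler--Lagrange system \eqref{EqGenEq}, rewrites it as $-\Div(\alpha\n\psi)=0$ with $\p_\nu\psi=0$ for a local phase $\psi$, and concludes that the difference of two minimizing phases solves the homogeneous Neumann problem, hence is constant by integration by parts; you instead obtain $\n\varphi_1=\n\varphi_2$ a.e.\ directly from strict convexity of $y\mapsto\alpha|y|^2$, which is slightly more economical since it bypasses the PDE entirely. Finally, where the paper dispatches the Euler--Lagrange equation with a citation to ``standard computations of directional derivatives,'' you carry out the algebra $\Div(\alpha\n u)=\Div[\alpha(u\wedge\n u)]\,u^{\perp}-\alpha|\n u|^2u$ explicitly; the computation is correct (including the reduction of the natural condition $\alpha\,(u\wedge\p_\nu u)=0$ to $\p_\nu u=0$), and the distributional caveat you flag is resolved by the standard facts that $u\cdot\n u=0$ a.e.\ for $H^1$ unimodular maps and that $\alpha|\n u|^2u\in L^1(\dom)$, so the identity holds against smooth test functions. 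No gap.
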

\begin{proof}Since from Proposition \ref{PropRappelDeg}.\ref{PropRappelDeg4}, the set $\E_{\bf d}$ is closed under the weak-$H^1$ convergence, the existence of solution of \eqref{EqGenFormMinPb} is obtained by direct minimization.

If $u_{\bf d}$ is  a solution of \eqref{EqGenFormMinPb}, then from standard computations of directional derivatives we get that \eqref{EqGenEq} is satisfied [see {\it e.g.} Appendix A in \cite{Publi4}].

Let $u_{\bf d}$ be a  solution of \eqref{EqGenFormMinPb}. From Proposition \ref{PropRappelDeg}.\ref{PropRappelDeg5}, there exists $\psi_{\bf d}$ which is locally defined in $\dom$ and whose gradient is in $L^2(\dom,\R^2)$ s.t. $u_{\bf d}=\e^{\imath\psi_{\bf d}}$. In terms of $\psi_{\bf d}$, Equations \eqref{EqGenEq} reads  :
\begin{equation}\label{EqGenEqPhase}
\begin{cases}
-\Div(\alpha\n \psi_{\bf d})=0&\text{ in }\dom
\\
\p_\nu \psi_{\bf d}=0&\text{ on }\p\dom
\end{cases}.
\end{equation}
Thus, if $v_{\bf d}$ is a minimizers, then, with the help of Proposition \ref{PropRappelDeg}.\ref{PropRappelDeg4}, there exists $\varphi\in H^1(\dom,\R)$ s.t. $v_{\bf d}=\e^{\imath(\psi_{\bf d}+\varphi)}$. Then, using the minimality of $v_{\bf d}$ we get
\[
\begin{cases}
-\Div[\alpha\n (\psi_{\bf d}+\varphi)]=0&\text{ in }\dom
\\
\p_\nu (\psi_{\bf d}+\varphi)=0&\text{ on }\p\dom
\end{cases}.
\]
Consequently, using \eqref{EqGenEqPhase} we obtain
\begin{equation}\label{EqGenEqDephase}
\begin{cases}
-\Div(\alpha\n\varphi)=0&\text{ in }\dom
\\
\p_\nu \varphi=0&\text{ on }\p\dom
\end{cases}.
\end{equation}
With the help an integration by parts, we easily get that $\varphi\in H^1(\dom,\R)$ solves \eqref{EqGenEqDephase} if and only if $\varphi$ is a constant. This argument yields  the uniqueness of the solution up to a constant rotation.

\end{proof}
\section[First step in the proof of Theorem \ref{ThmMicroEstm}]{First step in the proof of Theorem \ref{ThmMicroEstm}: splitting of the domain}\label{Section.FirstStep}
The first step in the proof of Theorem \ref{ThmMicroEstm} is standard. The strategy employed was already used in \cite{dos2015microscopic}. It consists in splitting the integral over $\dom_{R,{\bf z}}$ [in \eqref{AuxMicroPb}] in two parts: the integral over $\O_R:=B_R\setminus\overline{\o}$ and the one over $\o_{\rho,{\bf z}}:=\o\setminus\cup_{i=1}^N\overline{B(z_i,\rho)}$ [as presented in Theorem \ref{ThmMicroEstm}].

For each integrals we consider a mixed minimization problem by adding an arbitrary Dirichlet boundary condition on $\p\o$: $h\in H^{1/2}(\p\o,\S^1)$ s.t. $\deg(h)=d=\sum d_i$. 

We then claim that these mixed minimization problems admit "unique" solutions. 

In the next steps we will  solve these problems, we will minimize among $h\in H^{1/2}(\p\o,\S^1)$ s.t. $\deg(h)=d$ and finally we will decouple the minimal energy according to the different  data.

The splitting consists in the following obvious equality:
\begin{equation}\label{DecompositionEnergy}
I(R,\rho,{\bf z},{\bf d})= \inf_{\substack{h\in H^{1/2}(\p\o,\S^1)\\\text{s.t. }\deg(h)=d}}\left\{\inf_{\substack{v\in H^1(\O_R,\S^1)\\\tr_{\p\o}(v)=h}}\frac{1}{2}\int_{\O_{R}}\alpha|\n v|^2+\inf_{\substack{w\in H^1(\o_{\rho,{\bf z}},\S^1)\\\tr_{\p\o}(w)=h\\\deg_{\p B(z_i,\rho)}(w)=d_i\,\forall i}}\frac{b^2}{2}\int_{\o_{\rho,{\bf z}}}|\n w|^2\right\}.
\end{equation}
The three previous minimization problems admit "unique" solutions:
\begin{prop}\label{Exist+Eq-SplittingDomain}
\begin{enumerate}
\item Both minimization problems in \eqref{DecompositionEnergy} having a [partial] Dirichlet boundary condition $h\in H^{1/2}(\p\o,\S^1)$ in \eqref{DecompositionEnergy} admit each a unique solution. For $h\in H^{1/2}(\p\o,\S^1)$ let $v_{R,h}$ be the solution of 
\begin{equation}\label{GlobExt}
\inf_{\substack{v\in H^1(\O_R,\S^1)\\\tr_{\p\o}(v)=h}}\int_{\O_{R}}\alpha|\n v|^2
\end{equation}
and $w_{\rho,h}$ be the one of 
\begin{equation}\label{GlobInt}
\inf_{\substack{w\in H^1(\o_{\rho,{\bf z}},\S^1)\\\tr_{\p\o}(w)=h\\\deg_{\p B(z_i,\rho)}(w)=d_i\,\forall i}}\int_{\o_{\rho,{\bf z}}}|\n w|^2.
\end{equation}
Then $v_{R,h}$ is the unique solution of
\begin{equation}\label{EqGlobExt}
\begin{cases}
-\Div(\alpha\n v_{R,h})=\alpha v_{R,h}|\n v_{R,h}|^2&\text{ in }\O_R
\\v_{R,h}=h&\text{ on }\p\o
\\\p_\nu v_{R,h}=0&\text{ on }\p B_R
\end{cases}
\end{equation}
and $w_{\rho,h}$ is the unique solution of
\begin{equation}\label{EqGlobInt}
\begin{cases}
-\Delta w_{\rho,h}= w_{\rho,h}|\n w_{\rho,h}|^2&\text{ in }\o_{\rho,{\bf z}}
\\w_{\rho,h}=h&\text{ on }\p\o
\\\p_\nu w_{\rho,h}=0&\text{ on }\p B(z_i,\rho),\,i\in\{1,...,N\}
\\\deg_{\p B(z_i,\rho)}(w_{\rho,h})=d_i&i\in\{1,...,N\}
\end{cases}.
\end{equation}
\item The minimization problem in \eqref{DecompositionEnergy} among $h\in H^{1/2}(\p\o,\S^1)$ s.t. $\deg(h)=d$ admits solutions. Moreover if $h_0$ is a solution, then $\tilde{h}_0$ is a minimizer if and only if there exists $\lambda\in\S^1$ s.t. $\tilde{h}_0=\lambda h_0$.
\end{enumerate} 
\end{prop}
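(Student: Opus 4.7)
The plan is to mimic the existence-and-uniqueness argument of Proposition \ref{PropExistResultMainTypPB}, taking advantage of the fact that the Dirichlet datum $h$ on $\p\o$ rigidifies the phase and upgrades the usual ``uniqueness up to a rotation'' into genuine uniqueness for the two inner problems. Once part 1 is established, part 2 will follow from the tautological identity, built into \eqref{DecompositionEnergy}, that the outer infimum over $h$ coincides with $I(R,\rho,{\bf z},{\bf d})$, which is an instance of the general problem \eqref{EqGenFormMinPb} already treated in Proposition \ref{PropExistResultMainTypPB}.

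For part 1, existence of minimizers of \eqref{GlobExt} and \eqref{GlobInt} follows by direct minimization: the admissible sets are non-empty (constructed by an $H^1$-extension of $h$ inside $\O_R$ and $\o_{\rho,{\bf z}}$, using Proposition \ref{PropRappelDeg}.\ref{PropRappelDeg0}--\ref{PropRappelDeg3}), weakly closed in $H^1$ by Proposition \ref{PropRappelDeg}.\ref{PropRappelDeg4}, and the weighted Dirichlet functional is weakly lower semicontinuous. For uniqueness of $v_{R,h}$, given two minimizers $v_1,v_2$, the quotient $v_2\overline{v_1}$ has trace $1$ on $\p\o$, hence zero degree there, hence by Proposition \ref{PropRappelDeg}.\ref{PropRappelDeg4} admits a global phase $\varphi\in H^1(\O_R,\R)$ with $v_2=v_1{\rm e}^{\imath\varphi}$. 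Because ${\rm e}^{\imath\varphi}\equiv 1$ on the connected curve $\p\o$, Proposition \ref{PropRappelDeg}.\ref{PropRappelDeg1} forces $\tr_{\p\o}\varphi$ to be a constant in $2\pi\Z$; absorbing this constant, I assume $\varphi\equiv 0$ on $\p\o$. The directional-derivative computation from the proof of Proposition \ref{PropExistResultMainTypPB} then shows that $\varphi$ solves the mixed problem
\[
-\Div(\alpha\n\varphi)=0\ \text{in }\O_R,\qquad \varphi=0\ \text{on }\p\o,\qquad \p_\nu\varphi=0\ \text{on }\p B_R,
\]
and testing by $\varphi$ with an integration by parts yields $\int_{\O_R}\alpha|\n\varphi|^2=0$, so $\varphi\equiv 0$ and $v_1=v_2$. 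The Euler--Lagrange system \eqref{EqGlobExt} is obtained exactly as in Proposition \ref{PropExistResultMainTypPB}, the only change being that admissible variations must vanish on $\p\o$, so that only the natural Neumann condition on $\p B_R$ survives.

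The argument for $w_{\rho,h}$ is structurally identical, relying on the observation that if $w_1,w_2$ are two minimizers then $w_2\overline{w_1}$ has degree $d_i-d_i=0$ on each $\p B(z_i,\rho)$, so Proposition \ref{PropRappelDeg}.\ref{PropRappelDeg4} once more supplies a global phase $\varphi\in H^1(\o_{\rho,{\bf z}},\R)$; the same ``constant on $\p\o$, hence zero after a shift'' step and the analogous integration by parts for the mixed problem on $\o_{\rho,{\bf z}}$ conclude.

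For part 2, by \eqref{DecompositionEnergy} the outer infimum over $h$ equals $I(R,\rho,{\bf z},{\bf d})$; by Proposition \ref{PropExistResultMainTypPB} applied to the perforated domain $\dom_{R,{\bf z}}$, this infimum is attained, with minimizers unique up to multiplication by a constant in $\S^1$. The correspondence $h\leftrightarrow u$, where $u$ is defined by gluing $v_{R,h}$ on $\O_R$ with $w_{\rho,h}$ on $\o_{\rho,{\bf z}}$ in one direction (the gluing produces an $H^1$-function since both pieces share the trace $h$ on $\p\o$) and by $h=\tr_{\p\o}(u)$ in the other, is a bijection between minimizers: the inner infima are saturated precisely when $u$ minimizes the global problem. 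Existence and the rotational uniqueness statement for $h$ then transfer directly from those for $u$: two minimizers $h_0,\tilde h_0$ correspond to two minimizing $u_0,\tilde u_0$ with $\tilde u_0=\lambda u_0$, whence $\tilde h_0=\lambda h_0$. The main technical point in the whole argument is the uniqueness analysis of the phase for the two inner problems, which boils down to the unique solvability of the mixed Dirichlet/Neumann elliptic problem for $\varphi$; everything else is a repackaging of facts already assembled in Propositions \ref{PropRappelDeg} and \ref{PropExistResultMainTypPB}.
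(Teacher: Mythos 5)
Your proof is correct and follows essentially the same route as the paper: existence by direct minimization, uniqueness by writing the ratio of two solutions as $\e^{\imath\varphi}$ with $\varphi$ solving the linear mixed Dirichlet/Neumann problem $-\Div(\alpha\n\varphi)=0$ with vanishing data on $\p\o$ (hence $\varphi\equiv0$), and part 2 by transferring existence and rotational uniqueness from Proposition \ref{PropExistResultMainTypPB} through the tautological decomposition \eqref{DecompositionEnergy}. The only cosmetic difference is that the paper parametrizes all admissible competitors as $v_0\e^{\imath\varphi}$ with $\tr_{\p\o}(\varphi)=0$ rather than comparing two minimizers, which is the same computation.
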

\begin{proof}
It is clear [by considering a minimizing sequence] that \eqref{GlobExt} and \eqref{GlobInt} admit solutions. Moreover these minimizers solve the corresponding equations \eqref{EqGlobExt} and \eqref{EqGlobInt}.

We now prove that \eqref{EqGlobExt} admits a unique solution. The argument is similar to prove that the solution of \eqref{EqGlobInt} is unique. Let $v_0$ be a solution of \eqref{EqGlobExt} and $v\in H^1(\O_R,\S^1)$ s.t. $\tr_{\p\o}(v)=h$. On the one hand, writing $v_0=\e^{\imath\psi_0}$ where $\psi_0$ is locally defined in $\O_R$ and $\n\psi_0\in L^2(\O_R)$ is globally defined [Proposition \ref{PropRappelDeg}.\ref{PropRappelDeg5}], it standard to get that 
\[
\begin{cases}v\in H^1(\O_R,\S^1)\\\tr_{\p\o}(v)=h\end{cases}\Longleftrightarrow\begin{cases}v=\e^{\imath(\psi_0+\varphi)}\\\varphi\in H^1(\O_R,\R)\text{ s.t. }\tr_{\p\o}(\varphi)=0
\end{cases}.
\]  
On the other hand, from direct calculations, we have, for $v=\e^{\imath(\psi_0+\varphi)}$ s.t. $\varphi\in H^1(\O_R,\R)\&\tr_{\p\o}(\varphi)=0$, the following equivalence
\[
\begin{cases}-\Div(\alpha\n v)=\alpha v|\n v|^2\text{ in }\O_R\\\p_\nu v=0\text{ on }\p B_R\end{cases}\Longleftrightarrow\begin{cases}-\Div[\alpha\n(\psi_0+\varphi)]=0\text{ in }\O_R\\\p_\nu (\psi_0+\varphi)=0\text{ on }\p B_R\end{cases}.
\]
Thus $-\Div(\alpha\n\psi_0)=0$ in $\O_R$ and $\p_\nu\psi_0=0$ on $\p B_R$. Consequently, if $v=\e^{\imath(\psi_0+\varphi)}$ is a solution of \eqref{EqGlobExt}, then $\varphi$ solves 
\[
\begin{cases}-\Div(\alpha\n\varphi)=0\text{ in }\O_R\\\p_\nu\varphi=0\text{ on }\p B_R\end{cases}. 
\]
Noting that $\tr_{\p\o}(\varphi)=0$ we immediatly obtained that $\varphi=0$, {\it i.e.}, $v=v_0$.

The second part of the proposition is a direct consequence of Proposition \ref{PropExistResultMainTypPB} and of the first part of Proposition \ref{Exist+Eq-SplittingDomain}.
\end{proof}
\section[Second step in the proof of Theorem \ref{ThmMicroEstm}]{Second step in the proof of Theorem \ref{ThmMicroEstm}: the key ingredient}\label{SectionSpeSol}
The key ingredient in this article is the use of {\it special solutions}. In order to motivate their use, we focus on the fully radial homogeneous case: $\o=\D$,\,$\alpha\equiv1$, $N=1$, $z=0$.

It is easy to check that, letting $\Ring(R,\rho,0):= B_R\setminus \overline{B_\rho}$ with $R>\rho>0$, for $d\in\Z$, the map 
\[
\begin{array}{cccc}
u_d:&\Ring(R,\rho,0)&\to&\S^1\\&x&\mapsto&\left(\dfrac{x}{|x|}\right)^d
\end{array}
\]
is a global minimizer of the Dirichlet functional $\displaystyle\dfrac{1}{2}\int_{\Ring(R,\rho,0)}|\n\cdot|^2$ in the space 
\[
\E_d:=\{u\in H^1[\Ring(R,\rho,0),\S^1]\,|\,\deg(u)=d\}.
\]
Letting $\theta(x)$ be a determination of the argument of $x\in\C\setminus\{0\}$ which is locally defined in $\Ring(R,\rho,0)$ and whose gradient is globally defined, we have $u_d=\e^{\imath d\theta}$. 

Let $u\in\E_d$ and $\varphi\in H^1[\Ring(R,\rho,0),\R]$ be s.t. $u=u_d\e^{\imath\varphi}=\e^{\imath (d\theta+\varphi)}$ [Proposition \ref{PropRappelDeg}.\ref{PropRappelDeg4}]. Since $\theta$ solves $-\Delta\theta=0$ in $\Ring(R,\rho,0)$ and $\p_\nu\theta=0$ on $\p \Ring(R,\rho,0)$ with the help of an integration by parts we obtain
\begin{eqnarray*}
\dfrac{1}{2}\int_{\Ring(R,\rho,0)}|\n u|^2&=&\dfrac{1}{2}\int_{\Ring(R,\rho,0)}|\n (d\theta+\varphi)|^2
\\&=&\dfrac{d^2}{2}\int_{\Ring(R,\rho,0)}|\n \theta|^2+\dfrac{1}{2}\int_{\Ring(R,\rho,0)}|\n \varphi|^2
\\&=&\dfrac{1}{2}\int_{\Ring(R,\rho,0)}|\n u_d|^2+\dfrac{1}{2}\int_{\Ring(R,\rho,0)}|\n \varphi|^2.
\end{eqnarray*}
These calculations are standard and give an easy decoupling for the energy of $u=u_d\e^{\imath\varphi}$ as the energy of $u_d$ pulse those of the dephasing $\varphi$.

The main argument of  this article consists in the fact that this argument is not restricted to the fully radial homogeneous case. Indeed we have the following proposition:
\begin{prop}\label{PropMainArgDec}
Let $\dom$ be a perforated domain,  $B\in(0;1)$, $\alpha\in L^\infty(\dom;[B^2;B^{-2}])$ and ${\bf d}\in\Z^N$. We let $u_{\bf d}$ be a minimizer of \eqref{EqGenFormMinPb}.
Then for $\varphi\in H^1(\dom,\R)$ we have
\[
\frac{1}{2}\int_\dom\alpha|\n (u_{\bf d}\e^{\imath\varphi})|^2=\frac{1}{2}\int_\dom\alpha|\n u_{\bf d}|^2+\frac{1}{2}\int_\dom\alpha|\n \varphi|^2.
\]
\end{prop}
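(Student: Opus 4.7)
The plan is to reduce the claimed identity to a pointwise algebraic expansion plus one variational identity, both supplied by material already in the excerpt.

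First I will invoke Proposition \ref{PropRappelDeg}.\ref{PropRappelDeg5} to produce a locally defined phase $\psi_{\bf d}$ of $u_{\bf d}$ whose gradient $\n\psi_{\bf d}=u_{\bf d}\wedge\n u_{\bf d}$ is a globally defined element of $L^2(\dom,\R^2)$. Then $u_{\bf d}\e^{\imath\varphi}=\e^{\imath(\psi_{\bf d}+\varphi)}$, and a direct computation based on the convention $z_1\wedge z_2={\rm Im}(\overline{z_1}z_2)$ yields the pointwise identities $|\n u_{\bf d}|^2=|\n\psi_{\bf d}|^2$ and
\[
|\n(u_{\bf d}\e^{\imath\varphi})|^2=|\n(\psi_{\bf d}+\varphi)|^2=|\n\psi_{\bf d}|^2+2\,\n\psi_{\bf d}\cdot\n\varphi+|\n\varphi|^2.
\]
Multiplying by $\alpha/2$ and integrating on $\dom$, the statement of the proposition reduces to the cross-term cancellation
\[
\int_\dom\alpha\,\n\psi_{\bf d}\cdot\n\varphi=0\qquad\text{for every }\varphi\in H^1(\dom,\R).
\]

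This cancellation is precisely the weak form of the Euler--Lagrange equation \eqref{EqGenEqPhase} already established in the proof of Proposition \ref{PropExistResultMainTypPB}, namely $-\Div(\alpha\n\psi_{\bf d})=0$ in $\dom$ together with $\p_\nu\psi_{\bf d}=0$ on $\p\dom$; I can either cite that result, or re-derive the identity as a first-variation condition. For the latter, since $u_{\bf d}\e^{\imath t\varphi}\in\E_{\bf d}$ for every $t\in\R$ by Proposition \ref{PropRappelDeg}.\ref{PropRappelDeg4}, applying the pointwise expansion above with $t\varphi$ in place of $\varphi$ shows that $t\mapsto\int_\dom\alpha|\n(u_{\bf d}\e^{\imath t\varphi})|^2$ is a quadratic polynomial in $t$ whose linear coefficient is $2\int_\dom\alpha\,\n\psi_{\bf d}\cdot\n\varphi$; minimality of $u_{\bf d}$ at $t=0$ then forces this coefficient to vanish.

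The main --- very mild --- obstacle is purely notational: one must keep the wedge-product sign convention straight in identifying $\n\psi_{\bf d}=u_{\bf d}\wedge\n u_{\bf d}$, and keep in mind that although $\psi_{\bf d}$ itself is only locally single-valued, its gradient is globally in $L^2$, which is all the identity uses. Since $\alpha\in L^\infty$ and all objects lie in $H^1$, no approximation, regularity, or compactness argument is required.
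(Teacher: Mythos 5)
Your proposal is correct and follows essentially the same route as the paper: write $u_{\bf d}=\e^{\imath\psi_{\bf d}}$ via Proposition \ref{PropRappelDeg}.\ref{PropRappelDeg5}, expand $|\n(\psi_{\bf d}+\varphi)|^2$, and kill the cross term $\int_\dom\alpha\,\n\psi_{\bf d}\cdot\n\varphi$ using the equation $-\Div(\alpha\n\psi_{\bf d})=0$ with the Neumann condition. Your alternative first-variation derivation of the cross-term cancellation is just the weak formulation of that same Euler--Lagrange equation, so the two arguments coincide.
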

\begin{proof}
We fix $\dom,B,\alpha,{\bf d}$ be as in the proposition. First note that, from Proposition \ref{PropExistResultMainTypPB}, we get the existence of $u_{\bf d}$. 
Moreover $u_{\bf d}$ is a solution of
\[
\begin{cases}
-\Div(\alpha\n u_{\bf d})=\alpha u_{\bf d}|\n u_{\bf d}|^2&\text{ in }\dom
\\
\p_\nu u_{\bf d}=0&\text{ on }\p\dom
\end{cases}.
\]
We may write $u_{\bf d}=\e^{\imath\psi}$ where $\psi$ is locally defined in $\dom$ and $\n\psi\in L^2(\dom,\R^2)$ [Proposition \ref{PropRappelDeg}.\ref{PropRappelDeg5}]. 

Thus $\psi$ solves
\begin{equation}\label{EqPropPsiGeneric}
\begin{cases}
-\Div(\alpha\n \psi)=0&\text{ in }\dom
\\
\p_\nu \psi=0&\text{ on }\p\dom
\end{cases}.
\end{equation}
Let $\varphi\in H^1(\dom,\R)$. We have 
\begin{eqnarray*}
\frac{1}{2}\int_\dom\alpha|\n (u_{\bf d}\e^{\imath\varphi})|^2&=&\frac{1}{2}\int_\dom\alpha|\n (\e^{\imath(\psi+\varphi)})|^2
\\&=&\frac{1}{2}\int_\dom\alpha|\n (\psi+\varphi)|^2
\\&=&\frac{1}{2}\int_\dom\alpha|\n \psi|^2+\int_\dom\alpha\n\psi\cdot\n\varphi+\frac{1}{2}\int_\dom\alpha|\n \varphi|^2.
\end{eqnarray*}
From \eqref{EqPropPsiGeneric} and  an integration by parts  we get $\displaystyle\int_\dom\alpha\n\psi\cdot\n\varphi=0$ and this equality ends the proof of the proposition since
\[
\frac{1}{2}\int_\dom\alpha|\n \psi|^2=\frac{1}{2}\int_\dom\alpha|\n u_{\bf d}|^2.
\]
\end{proof}
\begin{remark}
It is easy to check that Proposition \ref{PropMainArgDec} allows to prove in a "different" way the uniqueness, up to a constant rotation, of a minimizer of \eqref{EqGenFormMinPb}.
\end{remark}
Because minimizers of \eqref{EqGenFormMinPb} are not unique, in order to fix such a minimizer we add an extra condition. This choice leads to the crucial notion of {\it special solution}.

In both next sections we define the special solutions in $\O_R= B_R\setminus\overline{\o}$ [Section \ref{Sec.SpecSolExt}] and in $\o_{\rho,{\bf z}}=\o\setminus\cup \overline{B(z_i,\rho)}$ [Section \ref{Sec.SpecSolIn}].
\subsection{The special solution in $\O_R$}\label{Sec.SpecSolExt}
In this section we focus on the annular type domain  $\O_R=B_R\setminus\overline{\o}$. We first treat the case  $d=1$ by considering:
\begin{equation}\label{MinGlobExt}
\inf_{\substack{v\in H^1(\O_R,\S^1)\\\deg(v)=1}}\frac{1}{2}\int_{\O_{R}}\alpha|\n v|^2.
\end{equation}
With the help of Proposition \ref{PropExistResultMainTypPB}, we may fix a map  $v_R\in H^1(\O_R,\S^1)$  s.t. $\deg(v_R)=1$ which is a solution of \eqref{MinGlobExt}. We freeze the non-uniqueness of  $v_R$ by letting $v_R$ be in the form
\begin{equation}\label{DefSolPartExtInclusmkj}
\text{$v_R=\dfrac{x}{|x|}{\rm e}^{\imath \gamma_R}$ with $\gamma_R\in H^{1}(\O_R,\R)$ s.t. $\int_{\p\o}\gamma_R=0$.}
\end{equation}

It is clear that such map  $v_R$ is uniquely and well defined. 

It is easy to check that, for  $d\in\Z$, we have $v_R^d$ which is a solution of the minimization problem: 
\begin{equation}\label{MinGlobExtDegd}
\inf_{\substack{v\in H^1(\O_R,\S^1)\\\deg(v)=d}}\frac{1}{2}\int_{\O_{R}}\alpha|\n v|^2.
\end{equation}
Moreover it is the unique solution of the form $v_R^d=\left(\dfrac{x}{|x|}\right)^d{\rm e}^{\imath \tilde\gamma}$ with $\tilde\gamma\in H^{1}(\O_R,\R)$ s.t. $\int_{\p\o}\tilde\gamma=0$.

We have the following proposition:
\begin{prop}
For $x=|x|{\rm e}^{\imath\theta}\in\O_R$ we have $v_R(x)={\rm e}^{\imath(\theta+\gamma_R(x))}$ with $\gamma_R\in H^1(\O_R)$ which is a solution of  
\begin{equation}\label{EquationExt}
\begin{cases}
-\Div\left[\alpha\n(\theta+\gamma_R)\right]=0\text{ in }\O_R
\\
\n(\theta+\gamma_R)\cdot\nu=0\text{ on }\p\O_R
\\
\int_{\p \o}\gamma_R=0
\end{cases}.
\end{equation}
\end{prop}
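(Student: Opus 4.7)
The plan is to read off the equation for $\gamma_R$ directly from the Euler--Lagrange system for $v_R$, exactly as in the proof of Proposition \ref{PropExistResultMainTypPB}, after justifying the global representation $v_R = \mathrm{e}^{\imath(\theta+\gamma_R)}$. Since $v_R$ is a solution of \eqref{MinGlobExt}, it is a minimizer of the weighted Dirichlet functional on the annular domain $\O_R$ with prescribed degree ${\bf d}=(1)$ (here $N=1$, $\o_1=\o$, there is no partial Dirichlet datum). Proposition \ref{PropExistResultMainTypPB} therefore gives
\[
\begin{cases}-\Div(\alpha\n v_R)=\alpha v_R|\n v_R|^2&\text{in }\O_R\\ \p_\nu v_R=0&\text{on }\p\O_R\end{cases},
\]
that is, Neumann conditions on both $\p B_R$ and $\p\o$.

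First, I would justify the representation $v_R=\dfrac{x}{|x|}\mathrm{e}^{\imath\gamma_R}$ with a globally defined $\gamma_R\in H^1(\O_R,\R)$. Since $0\in\o$, we have $0\notin\overline{\O_R}$, so $x\mapsto x/|x|$ belongs to $H^1(\O_R,\S^1)$ with degree $1$ on $\p\o$. By Proposition \ref{PropRappelDeg}.\ref{PropRappelDeg2}, the map $v_R\cdot|x|/x$ has degree $0$, and Proposition \ref{PropRappelDeg}.\ref{PropRappelDeg4} yields $v_R|x|/x=\mathrm{e}^{\imath\gamma_R}$ for some $\gamma_R\in H^1(\O_R,\R)$, uniquely determined up to an additive element of $2\pi\Z$. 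The normalization $\int_{\p\o}\gamma_R=0$ pins down this constant, and so $\gamma_R$ is uniquely defined by \eqref{DefSolPartExtInclusmkj}.

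Next, I would rewrite the Euler--Lagrange system for $v_R$ in terms of the phase $\psi:=\theta+\gamma_R$, where $\theta$ is a local determination of $\arg(x)$ in $\O_R$ (local only, but $\n\theta$ is globally defined in $\O_R$ and belongs to $L^2$, by Proposition \ref{PropRappelDeg}.\ref{PropRappelDeg5}). With $v_R=\mathrm{e}^{\imath\psi}$, direct computation gives $\n v_R=\imath v_R\n\psi$, $|\n v_R|^2=|\n\psi|^2$, and
\[
-\Div(\alpha\n v_R)=\alpha v_R|\n\psi|^2-\imath v_R\Div(\alpha\n\psi).
\]
Plugging this into the Euler--Lagrange equation, the potential terms cancel and we are left with $\Div(\alpha\n\psi)=0$ in $\O_R$. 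The Neumann condition $\p_\nu v_R=\imath v_R\p_\nu\psi=0$ on $\p\O_R$ becomes $\p_\nu\psi=0$ on $\p\O_R$. Together with the normalization already built into the definition of $\gamma_R$, this yields exactly \eqref{EquationExt}.

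The only genuinely delicate point is the passage from the $\C$-valued formulation to the scalar phase formulation: $\theta$ is only locally well defined, so one has to check that the identity $v_R=\mathrm{e}^{\imath(\theta+\gamma_R)}$ and the differential identities above are consistent across the $2\pi\Z$-ambiguity of local determinations. This is harmless because only $\n\theta$ enters the equation and $\n\theta$ is a globally defined $L^2$-vector field on $\O_R$; the remaining argument is then purely local and identical to the corresponding calculation in the proof of Proposition \ref{PropExistResultMainTypPB}. Uniqueness of $\gamma_R$ as a solution of \eqref{EquationExt} follows from the same reasoning as in that proof, since the difference of two solutions would solve the homogeneous Neumann problem for $-\Div(\alpha\n\cdot)$ with zero mean on $\p\o$.
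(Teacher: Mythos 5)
Your proof is correct and follows essentially the same route as the paper: the paper states this proposition without a separate proof precisely because it is the specialization to $\O_R$, ${\bf d}=(1)$, of Proposition \ref{PropExistResultMainTypPB} together with the definition \eqref{DefSolPartExtInclusmkj}, and your computation passing from the Euler--Lagrange system for $v_R$ to the phase equation for $\psi=\theta+\gamma_R$ reproduces exactly the step from \eqref{EqGenEq} to \eqref{EqGenEqPhase} in that proof. The only cosmetic remark is that the normalization $\int_{\p\o}\gamma_R=0$ is achieved by simultaneously choosing the rotation $\lambda\in\S^1$ of the minimizer and the additive constant of the lifting (a full real constant is available, not just an element of $2\pi\Z$), which is how \eqref{DefSolPartExtInclusmkj} fixes $v_R$ uniquely.
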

The special solution $v_R$ is fundamental in the analysis since it allows to get a decoupling of weighted Dirichlet energy. Namely, from Proposition \ref{PropMainArgDec} we have:
\begin{lem}\label{LemDecOmRmlkjh}
For $d\in\Z$ and $\varphi\in H^1(\O_R,\R)$ we have:
\[
\dfrac{1}{2}\int_{\O_R}\alpha|\n (v_R^d\e^{\imath\varphi})|^2=\dfrac{d^2}{2}\int_{\O_R}\alpha|\n v_R|^2+\dfrac{1}{2}\int_{\O_R}\alpha|\n\varphi|^2.
\]
\end{lem}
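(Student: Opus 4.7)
The plan is to invoke Proposition \ref{PropMainArgDec} applied to the annular domain $\dom = \O_R$ with degree $d$, and then to reduce the energy of the power $v_R^d$ to $d^2$ times that of $v_R$ by using the local phase representation. Since $\o$ is simply connected and $B_R$ is simply connected, $\O_R$ is a perforated domain in the sense of Definition \ref{DefPerfDom} (an annular type domain), so Proposition \ref{PropMainArgDec} is applicable.

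First, I would observe that, as noted just before the lemma, $v_R^d$ is a minimizer of the weighted Dirichlet energy \eqref{MinGlobExtDegd} in the class of $\S^1$-valued $H^1$-maps on $\O_R$ of degree $d$. Hence it plays the role of the map $u_{\bf d}$ of Proposition \ref{PropMainArgDec} with ${\bf d}=d$. Applying that proposition directly gives, for every $\varphi\in H^1(\O_R,\R)$,
\[
\frac{1}{2}\int_{\O_R}\alpha|\n(v_R^d\e^{\imath\varphi})|^2 = \frac{1}{2}\int_{\O_R}\alpha|\n v_R^d|^2 + \frac{1}{2}\int_{\O_R}\alpha|\n\varphi|^2.
\]

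Second, I would rewrite the first term on the right. By construction, $v_R=\e^{\imath(\theta+\gamma_R)}$ where $\theta+\gamma_R$ is a locally defined real phase of $v_R$ with $\n(\theta+\gamma_R)\in L^2(\O_R,\R^2)$ globally defined (Proposition \ref{PropRappelDeg}.\ref{PropRappelDeg5}). Then $v_R^d=\e^{\imath d(\theta+\gamma_R)}$, so pointwise a.e. in $\O_R$ one has
\[
|\n v_R^d|^2 = d^2|\n(\theta+\gamma_R)|^2 = d^2|\n v_R|^2,
\]
which yields $\frac12\int_{\O_R}\alpha|\n v_R^d|^2 = \frac{d^2}{2}\int_{\O_R}\alpha|\n v_R|^2$. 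Combining this identity with the decoupling from the first step gives exactly the claimed equality.

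No serious obstacle is expected: the whole argument is a direct composition of Proposition \ref{PropMainArgDec} with the elementary homogeneity $|\n\e^{\imath d\psi}|^2=d^2|\n\psi|^2$. The only point that deserves a remark is that one should not invoke an integration by parts directly for $v_R$ on $\O_R$ (since $\theta$ is only locally defined), but this is already built into the statement of Proposition \ref{PropMainArgDec}, so no additional work is required.
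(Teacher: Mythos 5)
Your proof is correct and follows the paper's own route: the paper derives this lemma directly from Proposition \ref{PropMainArgDec}, using the (previously asserted) fact that $v_R^d$ minimizes \eqref{MinGlobExtDegd} on the annular domain $\O_R$. Your only addition is to spell out the pointwise identity $|\n v_R^d|^2=d^2|\n v_R|^2$ via the local phase $d(\theta+\gamma_R)$, which the paper leaves implicit and which is exactly the right justification.
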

The above lemma allows to get a crucial information on the asymptotic behavior of $(\gamma_R)_R$:
\begin{prop}\label{PropTheArgum}
There exists $\gamma_\infty\in H^{1}_{\rm loc}(\R^2\setminus\o,\R)$ s.t. when $R\to\infty$ we have $\gamma_R\to\gamma_\infty$ in $H_{\rm loc}^{1}(\R^2\setminus\o)$.
\end{prop}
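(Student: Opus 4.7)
The plan is to show that $(\gamma_R)_R$ is a Cauchy family in $H^1(\O_{R_*})$ for every fixed $R_*>R_0$, identify its limit, and thus obtain the desired $H^1_{\rm loc}$-convergence to $\gamma_\infty$.

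\textbf{Step 1 (decoupling on nested domains).} Fix $R_0<R<R'$. From $v_R=\frac{x}{|x|}{\rm e}^{\imath\gamma_R}$ and $v_{R'}|_{\O_R}=\frac{x}{|x|}{\rm e}^{\imath\gamma_{R'}}$ we read off $v_{R'}|_{\O_R}=v_R\,{\rm e}^{\imath(\gamma_{R'}-\gamma_R)}$ with $\gamma_{R'}-\gamma_R\in H^1(\O_R,\R)$. Applying Lemma~\ref{LemDecOmRmlkjh} with $d=1$ and $\varphi=\gamma_{R'}-\gamma_R$ yields
\[
\tfrac12\int_{\O_R}\alpha|\n v_{R'}|^2=f(R)+\tfrac12\int_{\O_R}\alpha|\n(\gamma_{R'}-\gamma_R)|^2,
\]
and splitting $2f(R')=\int_{\O_{R'}}\alpha|\n v_{R'}|^2$ into its $\O_R$-part and the annular part on $B_{R'}\setminus B_R$ produces the fundamental identity
\[
f(R')-f(R)=\tfrac12\int_{\O_R}\alpha|\n(\gamma_{R'}-\gamma_R)|^2+\tfrac12\int_{B_{R'}\setminus B_R}\alpha|\n v_{R'}|^2,
\]
a balance between two non-negative quantities.

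\textbf{Step 2 (Cauchy estimate and $H^1_{\rm loc}$-convergence).} The technical heart of the argument is to show that the annular contribution $\tfrac12\int_{B_{R'}\setminus B_R}\alpha|\n v_{R'}|^2$ absorbs, up to an $o(1)$-error as $R,R'\to\infty$, the whole increment $f(R')-f(R)$. One direction is immediate from the BBH-type lower bound $\tfrac12\int_{B_{R'}\setminus B_R}\alpha|\n v_{R'}|^2\ge B^2\pi\ln(R'/R)$ valid for any $\S^1$-valued degree-$1$ map; the matching upper bound on $f(R')-f(R)$ is obtained by plugging into $f(R')$ an explicit near-radial competitor on $\O_{R'}$, equal to $v_R$ on $\O_R$ and close to $\frac{x}{|x|}$ near $\p B_{R'}$, with correction terms controlled by the $H^{1/2}(\p B_R)$-norm of the trace of $\gamma_R$. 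This yields $\tfrac12\int_{\O_R}\alpha|\n(\gamma_{R'}-\gamma_R)|^2=o(1)$ as $R,R'\to\infty$. Combined with the uniform ellipticity $\alpha\ge B^2$, the normalisation $\int_{\p\o}(\gamma_{R'}-\gamma_R)=0$ and the trace/Poincaré inequality on the fixed domain $\O_{R_*}$ (whose constant depends only on $R_*$ and $\o$), this upgrades to a Cauchy property in $H^1(\O_{R_*})$. Hence $\gamma_R$ converges in $H^1(\O_{R_*})$ to some $\gamma_\infty$ for every $R_*>R_0$, i.e., in $H^1_{\rm loc}(\R^2\setminus\o)$.

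\textbf{Step 3 (identification of the limit).} Passing to the limit in the weak formulation of~\eqref{EquationExt} shows that $\gamma_\infty$ solves the exterior Neumann problem $-\Div[\alpha\n(\theta+\gamma_\infty)]=0$ in $\R^2\setminus\o$, $\n(\theta+\gamma_\infty)\cdot\nu=0$ on $\p\o$, with $\int_{\p\o}\gamma_\infty=0$. Since the whole sequence (not merely a subsequence) is already Cauchy, no separate uniqueness argument for $\gamma_\infty$ is needed. The principal obstacle is the $o(1)$ comparison in Step~2: the bare bounds $B^2\pi\ln R\le f(R)\le B^{-2}\pi\ln R$ from Theorem~\ref{ThmMicroEstm} are not fine enough, and the argument requires constructing sharp quasi-optimal competitors on large annuli far from $\o$, together with careful bookkeeping of the boundary traces of $\gamma_R$ on $\p B_R$.
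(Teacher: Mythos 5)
Your Step 1 is correct and is exactly the paper's starting point (the decoupling \eqref{CroisonsFORT} together with the splitting of $f(R')$ over $\O_R$ and $B_{R'}\setminus\overline{B_R}$). The problem is Step 2, which is where the proof actually lives and which you do not carry out — as you concede yourself in your closing sentence. The comparison you sketch (a competitor equal to $v_R$ on $\O_R$ and near-radial on the outer annulus, with an error "controlled by the $H^{1/2}(\p B_R)$-norm of the trace of $\gamma_R$") only yields $f(R')\le f(R)+\frac12\int_{B_{R'}\setminus\overline{B_R}}\alpha|\n v_{R'}|^2+\mathcal{O}(1)$, because you have no a priori smallness of the oscillation of $\gamma_R$ on $\p B_R$ — obtaining such smallness is essentially equivalent to the statement you are trying to prove. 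That $\mathcal{O}(1)$ bound is precisely Lemma \ref{AhhAhhAhh1} of the paper, and it does not give $\frac12\int_{\O_R}\alpha|\n(\gamma_{R'}-\gamma_R)|^2=o(1)$. The lower bound $B^2\pi\ln(R'/R)$ you invoke is irrelevant here: the identity already tells you both terms on the right-hand side are nonnegative; what is missing is an upper bound for $f(R')-f(R)$ matching the \emph{actual} annular energy of $v_{R'}$ up to $o(1)$, and nothing in your sketch produces it.

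The paper closes the gap differently, and the key point is that it never proves smallness of $\int_{\O_R}\alpha|\n\varphi_R|^2$ on the whole of $\O_R$ — only on $\O_{R^{1/4}}$, which suffices for $H^1_{\rm loc}$ convergence. Starting from the uniform bound of Lemma \ref{AhhAhhAhh1}, a mean-value argument on the annulus $B_{\sqrt R}\setminus\overline{B_{R^{1/4}}}$ produces a radius $r\in(R^{1/4},\sqrt R)$ on whose circle $\p_\theta\varphi_R$ has $L^2$-norm $\mathcal{O}(1/\sqrt{\ln R})$; one then freezes $\varphi_R$ to its mean inside $B_{r/2}$ by an interpolation of energy $\mathcal{O}(1/\ln R)$ (see \eqref{KeyEstPolarRRR}), uses the resulting map glued with $v_{R'}$ outside $B_R$ as a competitor for $v_{R'}$, and — crucially — cancels the common contribution of $\varphi_R$ and $\tilde\varphi_R$ on $B_R\setminus\overline{B_r}$ to conclude $\int_{\O_{R^{1/4}}}\alpha|\n\varphi_R|^2=\mathcal{O}(1/\ln R)$. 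This "localize, then compare" device is the missing idea; without it (or a genuine substitute) your argument stops at the $\mathcal{O}(1)$ level and the Cauchy property does not follow. Your Step 3 is harmless but not needed for the statement, which only asserts the existence of the $H^1_{\rm loc}$ limit.
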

\begin{proof}
Let $R'>R>R_0$ and $\varphi_R=\gamma_{R'}-\gamma_R$ in order to have $v_{R'}=v_R\e^{\imath\varphi_R}$ in $\O_R$.

From Lemma \ref{LemDecOmRmlkjh} we have 
\begin{equation}\label{CroisonsFORT}
\dfrac{1}{2}\int_{\O_R}\alpha|\n v_{R'}|^2=\dfrac{1}{2}\int_{\O_R}\alpha|\n (v_R\e^{\imath\varphi_R})|^2=\dfrac{1}{2}\int_{\O_R}\alpha|\n v_R|^2+\dfrac{1}{2}\int_{\O_R}\alpha|\n\varphi_R|^2.
\end{equation}
We need the following lemma:
\begin{lem}\label{AhhAhhAhh1}
There exists a constant $C_{B,\o}>0$ depending only on $B$ and $\o$ s.t.
\begin{equation}\nonumber
\dfrac{1}{2}\int_{\O_R}\alpha|\n\varphi_R|^2\leq C_{B,\o}.
\end{equation}
\end{lem}

For the convenience of the reader the proof of this lemma in postponed in Appendix [see Appendix \ref{SecProofAsyBeha}].

From Lemma \ref{AhhAhhAhh1} we have
\[
\dfrac{1}{2}\int_{B_{\sqrt R}\setminus\overline{B_{R^{1/4}}}}\alpha|\n\varphi_R|^2\leq C_{B,\o}.
\]
\begin{notation}
In the rest of this proof, $C_0$ stands for a constant depending only on $\o$ and $B$ derived from $C_{B,\o}$ and with universal multiplicative constants. Its values may change from line to line.
\end{notation}
Therefore, with the help of a mean value argument, we have the existence of  $r\in(R^{1/4},\sqrt R)$ and of a constant $C_0$ depending only on $B$ and $\o$ s.t.:
\[
\int_0^{2\pi}|\p_\theta\varphi_R(r\e^{\imath\theta})|^2\,{\rm d}\theta\leq\frac{C_0}{\ln R}.
\]
We denote $\di m_R:=\mint_0^{2\pi}\varphi_R(r\e^{\imath\theta})\,{\rm d}\theta$.

From the above estimate and with the help of a  Poincaré-Wirtinger inequality, we have  
\[
\int_0^{2\pi}\left(\varphi_R(r\e^{\imath\theta})-m_R\right)^2\,{\rm d}\theta\leq\frac{C_0}{\ln R}.
\]

We now define $\tilde\varphi_R\in H^1(B_R,\R)$ using polar coordinates: 
\[
\tilde\varphi_R(s,\theta)=\begin{cases}m_R&\text{for }s\in[0,r/2]
\\\di\dfrac{s-r/2}{r/2}\varphi_R(r,\theta)+\dfrac{r-s}{r/2}m_R&\text{for }s\in]r/2,r[ 
\\\varphi_R(s,\theta)&\text{for } s\in[r,R[
\end{cases}.
\]
It is easy to check that $\tilde\varphi_R\in H^1(B_R,\R)$ and with direct calculations we obtain:
\begin{eqnarray}\nonumber
\int_{B_{r}}|\n\tilde\varphi|^2&=&\int_{B_{r}\setminus{B_{r/2}}}|\n\tilde\varphi|^2
\\\nonumber&\leq&\int_{r/2}^{r}s\,{\rm d}s\int_0^{2\pi}\left\{\dfrac{4}{r^2}\left(\varphi_R(r,\theta)-m_R\right)^2+\dfrac{1}{s^2}\left[\frac{2(s-r)}{r}\right]^2|\p_{\theta}\varphi_R(r,\theta)|^2\right\}{\rm d}\theta
\\\label{KeyEstPolarRRR}&\leq&\frac{C_0}{\ln R}.
\end{eqnarray}
By noting that $\tr_{\p B_R}(v_R\e^{\imath\tilde\varphi_R})=\tr_{\p B_R}(v_R\e^{\imath\varphi_R})=\tr_{\p B_R}(v_{R'})$, with the help of $\tilde\varphi_R$ we construct $\tilde{v}_R\in H^1(\O_{R'},\S^1)$ :
\[
\tilde{v}_R=
\begin{cases}
v_{R'}&\text{in }B_{R'}\setminus\overline{B_R}
\\
v_{R}\e^{\imath\tilde\varphi_R}&\text{in }\O_R
\end{cases}.
\] 
From the minimality of  $v_{R'}$ and Lemma \ref{LemDecOmRmlkjh} we get
\begin{eqnarray}\nonumber
\dfrac{1}{2}\int_{\O_{R'}}\alpha|\n  v_{R'}|^2&\leq&\dfrac{1}{2}\int_{\O_{R'}}\alpha|\n \tilde v_R|^2
\\\nonumber&=&\dfrac{1}{2}\int_{\O_{R'}\setminus\overline{\O_R}}\alpha|\n \tilde v_R|^2+\dfrac{1}{2}\int_{\O_{R}}\alpha|\n \tilde v_R|^2
\\\label{CroisonslesDOIGTS}&=&\dfrac{1}{2}\int_{\O_{R'}\setminus\overline{\O_R}}\alpha|\n  v_{R'}|^2+\dfrac{1}{2}\int_{\O_{R}}\alpha|\n  v_R|^2+\int_{\O_R}\alpha|\n \tilde\varphi_R|^2.
\end{eqnarray}
Estimate \eqref{CroisonslesDOIGTS} implies: 
\[
\dfrac{1}{2}\int_{\O_{R}}\alpha|\n  v_{R'}|^2\leq\dfrac{1}{2}\int_{\O_{R}}\alpha|\n  v_R|^2+\dfrac{1}{2}\int_{\O_R}\alpha|\n \tilde\varphi_R|^2.
\]
The above inequality coupled with \eqref{CroisonsFORT} gives:
\[
\dfrac{1}{2}\int_{\O_R}\alpha|\n\varphi_R|^2\leq\dfrac{1}{2}\int_{\O_R}\alpha|\n \tilde\varphi_R|^2.
\]
On the other hand, from the definition of  $\tilde\varphi_R$ we have $\tilde\varphi_R=\varphi_R$ in $B_R\setminus\overline{B_{r}}$. Consequently we deduce:
\[
\dfrac{1}{2}\int_{\O_{r}}\alpha|\n\varphi_R|^2\leq\dfrac{1}{2}\int_{\O_{r}}\alpha|\n \tilde\varphi_R|^2.
\]
With \eqref{KeyEstPolarRRR} and since $r\in(R^{1/4},\sqrt R)$ we may conclude
\[
\dfrac{1}{2}\int_{\O_{R^{1/4}}}\alpha|\n\varphi_R|^2\leq \dfrac{C_0}{\ln R}.
\]
In particular, for a compact set $K\subset\R^2\setminus\o$ s.t. $\p\o\subset\p K$ we have for sufficiently large $R$
\[
\dfrac{1}{2}\int_{K}\alpha|\n\varphi_R|^2\leq \dfrac{C_0}{\ln R}.
\]
Since $\di\mint_{\p\o}\varphi_R=0$, we may use a  Poincaré type inequality to get:
\[
\|\varphi_R\|_{H^1(K)}\to0\text{ when $R\to\infty$ independently of $R'>R$.}
\]
It suffices to note that $\varphi_R=\gamma_{R'}-\gamma_R$ in order to conclude that $(\gamma_R)_R$ is a Cauchy family in $H^1(K)$. Then $(\gamma_R)_R$ is a Cauchy family in $H_{\rm loc}^1(\R^2\setminus\o)$. The completeness of $H_{\rm loc}^1(\R^2\setminus\o,\R)$ allows to get the existence of $\gamma_\infty\in H_{\rm loc}^{1}(\R^2\setminus\o,\R)$ s.t. $\gamma_R\to\gamma_\infty$ in $H_{\rm loc}^{1}(\R^2\setminus\o)$.
\end{proof}
\begin{cor}\label{Cor-CVGammaR}   We have two direct consequences of Proposition \ref{PropTheArgum} :
\begin{enumerate}
\item $\tr_{\p\o}(\gamma_R)\to \tr_{\p\o}(\gamma_\infty)$ in $H^{1/2}(\p\o)$,
\item $v_R=\dfrac{x}{|x|}{\rm e}^{\imath\gamma_R}\to v_\infty:=\dfrac{x}{|x|}{\rm e}^{\imath\gamma_\infty} $ in $H_{\rm loc}^{1}(\R^2\setminus\o)$.
\end{enumerate}
\end{cor}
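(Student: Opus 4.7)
Both claims follow from Proposition \ref{PropTheArgum} by standard continuity arguments: assertion (1) is a direct consequence of the continuity of the trace operator on a bounded Lipschitz domain whose boundary contains $\p\o$, while assertion (2) relies on the continuity of the composition $\gamma \mapsto {\rm e}^{\imath \gamma}$ from $H^1$ into $H^1$ on bounded sets, combined with multiplication by the smooth bounded map $x/|x|$.

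\textbf{Step 1 (Trace convergence).} Fix a bounded smooth open set $K \subset \R^2 \setminus \o$ (for instance a Lipschitz annular neighborhood of $\p\o$ sitting outside $\o$) whose boundary contains $\p\o$. By Proposition \ref{PropTheArgum}, $\gamma_R \to \gamma_\infty$ in $H^1(K)$. Since the trace operator $\tr_{\p\o}: H^1(K) \to H^{1/2}(\p\o)$ is continuous, one obtains $\tr_{\p\o}(\gamma_R) \to \tr_{\p\o}(\gamma_\infty)$ in $H^{1/2}(\p\o)$.

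\textbf{Step 2 (Convergence of $v_R$).} Fix any compact $K \subset \R^2 \setminus \o$. Since $\o$ is open with $0\in\o$ and $K$ is disjoint from $\o$, one has $\dist(K,0)>0$, so $x/|x|$ and $\n(x/|x|)$ are uniformly bounded on $K$. By the Leibniz rule it suffices to show ${\rm e}^{\imath \gamma_R} \to {\rm e}^{\imath \gamma_\infty}$ in $H^1(K)$. The $L^2$-part follows at once from the pointwise bound $|{\rm e}^{\imath \gamma_R} - {\rm e}^{\imath \gamma_\infty}| \leq |\gamma_R - \gamma_\infty|$ and the $L^2(K)$-convergence of $\gamma_R$. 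For the gradient, decompose
\[
\n({\rm e}^{\imath \gamma_R}) - \n({\rm e}^{\imath \gamma_\infty}) = \imath\, {\rm e}^{\imath \gamma_R}(\n\gamma_R - \n\gamma_\infty) + \imath\,({\rm e}^{\imath \gamma_R} - {\rm e}^{\imath \gamma_\infty})\, \n\gamma_\infty.
\]
The $L^2(K)$-norm of the first term equals $\|\n\gamma_R - \n\gamma_\infty\|_{L^2(K)} \to 0$ by Proposition \ref{PropTheArgum}. For the second, use $|{\rm e}^{\imath \gamma_R} - {\rm e}^{\imath \gamma_\infty}| \leq 2$ and extract, from any given subsequence, a sub-subsequence along which $\gamma_R \to \gamma_\infty$ a.e. on $K$; dominated convergence with dominating function $4|\n\gamma_\infty|^2 \in L^1(K)$ yields $L^2$-convergence to $0$ along that sub-subsequence, and since every subsequence admits such a sub-subsequence, the full family converges. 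The only subtlety is that $\gamma \mapsto {\rm e}^{\imath\gamma}$ is not Lipschitz from $H^1$ into $H^1$, which is precisely why the gradient estimate is handled by dominated convergence rather than by a single quantitative bound; no genuine obstacle is expected.
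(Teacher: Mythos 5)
Your proof is correct and is exactly the standard argument the paper leaves implicit (the corollary is stated without proof as a "direct consequence" of Proposition \ref{PropTheArgum}): trace continuity on a compact neighborhood $K$ of $\p\o$ with $\p\o\subset\p K$ gives (1), and the Leibniz/chain-rule decomposition with the Lipschitz bound $|{\rm e}^{\imath\gamma_R}-{\rm e}^{\imath\gamma_\infty}|\leq|\gamma_R-\gamma_\infty|$ plus dominated convergence along subsequences gives (2). No gaps.
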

\subsection{The special solution in $\o_{\rho,{\bf z}}$}\label{Sec.SpecSolIn}
As for the special solution in $\O_R$, we first consider the minimization problem:
\begin{equation}\label{MinGlobInt}
\inf_{\substack{w\in H^1(\o_{\rho,{\bf z}},\S^1)\\\deg_{\p B(z_i,\rho)}(w)=d_i\,\forall i}}\frac{1}{2}\int_{\o_{\rho,{\bf z}}}|\n w|^2.
\end{equation}
From Proposition \ref{PropExistResultMainTypPB}, we may fix $w_{\rho,{\bf z},{\bf d}}$, a unique solution of \eqref{MinGlobInt}, by imposing 
\begin{equation}\label{DefSolPartDansInclusmkj}
w_{\rho,{\bf z},{\bf d}}=\prod_{i=1}^N\left(\dfrac{x-z_i}{|x-z_i|}\right)^{d_i}{\rm e }^{\imath \gamma_{\rho,{\bf z},{\bf d}}}\text{ with }\int_{\p\o}\grz=0.
\end{equation}
For $i\in\{1,...,N\}$, we may locally define $\theta_i$ in $\R^2\setminus\{ z_i\}$ as a lifting  of $\dfrac{x-z_i}{|x-z_i|}$, {\it i.e.}, $\e^{\imath\theta_i}=\dfrac{x-z_i}{|x-z_i|}$. Moreover $\n\theta_i$ is globally defined.

We denote $\Theta:=d_1\theta_1+...+d_N\theta_N$ which is locally defined in $\R^2\setminus\{z_1,...,z_N\}$ and whose gradient is globally defined in $\R^2\setminus\{z_1,...,z_N\}$. It is clear that
\[
\prod_{i=1}^N\left(\dfrac{x-z_i}{|x-z_i|}\right)^{d_i}=\e^{\imath\Theta}\text{ in  }\R^2\setminus\{z_1,...,z_N\}.
\]
From the definition of $w_{\rho,{\bf z},{\bf d}}$ we have the following proposition.
\begin{prop}
$w_{\rho,{\bf z},{\bf d}}={\rm e}^{\imath(\Theta+\grz)}$ with $\grz\in H^1(\omrz)$ which is a solution of  
\begin{equation}\label{EquationInt}
\begin{cases}
-\Delta\grz=0\text{ in }\omrz
\\
\n(\Theta+\grz)\cdot\nu=0\text{ on }\p\omrz
\\
\int_{\p \o}\grz=0
\end{cases}.
\end{equation}
\end{prop}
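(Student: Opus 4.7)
The claim asserts that $\grz$ is globally well-defined as an element of $H^1(\omrz,\R)$ and that it solves a Laplace-Neumann system on $\omrz$. I would derive both facts in two steps: first establish the factorization by a degree argument, then translate the Euler-Lagrange equation for the minimizer $w_{\rho,{\bf z},{\bf d}}$ into an equation for its phase and peel off the canonical (multivalued) phase $\Theta$.

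For the factorization, I consider the auxiliary map $u:=w_{\rho,{\bf z},{\bf d}}\cdot{\rm e}^{-\imath\Theta}\in H^1(\omrz,\S^1)$. Because $\deg_{\p B(z_i,\rho)}(w_{\rho,{\bf z},{\bf d}})=d_i$ and the $d_i$-fold power of $(x-z_i)/|x-z_i|$ has degree $d_i$ on $\p B(z_i,\rho)$, I get $\deg_{\p B(z_i,\rho)}(u)=0$; Proposition \ref{PropRappelDeg}.\ref{PropRappelDeg3} then forces $\deg_{\p\o}(u)=0$ as well. Proposition \ref{PropRappelDeg}.\ref{PropRappelDeg4} therefore yields a global lifting $\grz\in H^1(\omrz,\R)$ satisfying $u={\rm e}^{\imath\grz}$, unique up to an additive constant; the choice $\int_{\p\o}\grz=0$ imposed in \eqref{DefSolPartDansInclusmkj} fixes that constant.

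For the system \eqref{EquationInt}, I apply Proposition \ref{PropExistResultMainTypPB} (with $\dom=\omrz$ and weight $\alpha\equiv 1$): the minimizer $w_{\rho,{\bf z},{\bf d}}$ solves
\[
-\Delta w_{\rho,{\bf z},{\bf d}}=w_{\rho,{\bf z},{\bf d}}|\n w_{\rho,{\bf z},{\bf d}}|^2 \text{ in } \omrz, \qquad \p_\nu w_{\rho,{\bf z},{\bf d}}=0 \text{ on } \p\omrz.
\]
Setting $\psi:=\Theta+\grz$ (locally defined, with $\n\psi\in L^2(\omrz,\R^2)$ globally defined), the computation recalled in the proof of Proposition \ref{PropExistResultMainTypPB} converts this system into $-\Delta\psi=0$ in $\omrz$ and $\p_\nu\psi=0$ on $\p\omrz$; this is exactly the Neumann condition $\n(\Theta+\grz)\cdot\nu=0$ of \eqref{EquationInt}. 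For the interior equation, I note that each local determination $\theta_i$ of $\arg(x-z_i)$ is harmonic on $\R^2\setminus\{z_i\}$, so $\Theta=\sum_i d_i\theta_i$ is harmonic on $\omrz$ (only $\n\Theta$ must be globally defined, which it is). Consequently $-\Delta\grz=-\Delta(\psi-\Theta)=0$ in $\omrz$, and the normalization $\int_{\p\o}\grz=0$ comes directly from the definition \eqref{DefSolPartDansInclusmkj}.

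The proof is essentially routine, parallel to the derivation of \eqref{EquationExt} performed in Section \ref{Sec.SpecSolExt}. The only point that requires some care is the local-modulo-$2\pi$ nature of $\Theta$; this creates no obstacle because every PDE manipulation involves only $\n\Theta$, which is a smooth vector field globally defined on $\omrz$.
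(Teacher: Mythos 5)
Your proof is correct and follows exactly the route the paper intends: the paper states this proposition without proof ("From the definition of $w_{\rho,{\bf z},{\bf d}}$..."), relying on the lifting machinery of Proposition \ref{PropRappelDeg} and the Euler--Lagrange system \eqref{EqGenEq}--\eqref{EqGenEqPhase} from Proposition \ref{PropExistResultMainTypPB} specialized to $\alpha\equiv1$, which is precisely what you spell out. Your added observation that $\Theta$ is harmonic (so that the interior equation can be written for $\grz$ alone rather than for $\Theta+\grz$) is the one detail worth making explicit, and you handle it correctly.
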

In contrast with the previous section, the asymptotic behavior of  $w_{\rho,{\bf z},{\bf d}}$ is well known when $\rho\to0$. For example Lefter and R{\u{a}}dulescu proved the following theorem.
\begin{thm}\label{THM-LR}[Theorem 1 \cite{LR1}]
For  $\rho_0>\rho>0$ we let $w_\rho$ be a minimizer of \eqref{MinGlobInt} and we consider a sequence $\rho_n\downarrow0$. Up to pass to a subsequence, there exists $w_0\in C^\infty(\overline{\o}\setminus\{z_1,...,z_N\},\S^1)$ s.t. $w_{\rho_n}\to w_0$ dans $C^k_{\rm loc}(\overline{\o}\setminus\{z_1,...,z_N\})$ for all $k\geq0$.

Moreover the limits $w_0$ are unique up to the multiplication by a constant in $\S^1$.
\end{thm}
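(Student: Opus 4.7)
The plan is to reformulate the convergence in terms of the phase. By \eqref{DefSolPartDansInclusmkj}, any minimizer $w_\rho$ of \eqref{MinGlobInt} may be written, after multiplication by a unimodular constant, as $w_\rho = e^{\imath(\Theta + \gamma_\rho)}$ with $\gamma_\rho := \gamma_{\rho,{\bf z},{\bf d}}$ harmonic in $\o_{\rho,{\bf z}}$ and normalized by $\int_{\p \o} \gamma_\rho = 0$, as in \eqref{EquationInt}. The statement thus reduces to proving that the family $(\gamma_\rho)_\rho$ is precompact in $C^k_{\mathrm{loc}}(\overline{\o} \setminus \{z_1,\ldots,z_N\})$ for every $k \geq 0$, and that any two subsequential limits agree up to an additive constant; this residual freedom accounts exactly for the indeterminacy by a constant in $\S^1$ advertised in the statement.

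The first step is a uniform a priori energy bound, modeled on Lemma \ref{AhhAhhAhh1}. Fix a compact $K \subset \overline{\o} \setminus \{z_1,\ldots,z_N\}$ and $r>0$ with $B(z_i,2r)\cap K = \emptyset$ for every $i$. For $\rho<r$ one constructs an admissible competitor $\tilde w_\rho$ for \eqref{MinGlobInt} that coincides with $w_\rho$ outside $\bigcup_i B(z_i,r)$ and replaces $w_\rho$ on each annulus $B(z_i,r)\setminus\overline{B(z_i,\rho)}$ by the canonical radial map $[(x-z_i)/|x-z_i|]^{d_i}$ times a gauge factor interpolating to match the trace of $w_\rho$ on $\p B(z_i,r)$. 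Invoking Proposition \ref{PropMainArgDec} on each annulus to decouple the gauge contribution, together with the minimality of $w_\rho$, yields $\int_{\o_{\rho,{\bf z}}} |\n \gamma_\rho|^2 \leq C$ uniformly in small $\rho$. Combined with $\int_{\p \o} \gamma_\rho = 0$ and a Poincar\'e inequality on $K$, this upgrades to a uniform bound $\|\gamma_\rho\|_{H^1(K)} \leq C_K$.

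The second step promotes this $H^1$ bound to local $C^k$ estimates. Since $\gamma_\rho$ solves $-\Delta \gamma_\rho = 0$ in $\o_{\rho,{\bf z}}$ and the Neumann datum on $\p\o$ equals $-\p_\nu \Theta|_{\p\o}$, which is smooth and $\rho$-independent (the $z_i$ lie strictly inside $\o$), interior and boundary Schauder regularity for the Neumann problem furnishes uniform $C^k$ bounds on every compact $K'\subset\overline{\o}\setminus\{z_1,\ldots,z_N\}$. An exhaustion of $\overline{\o}\setminus\{z_1,\ldots,z_N\}$ by nested compact sets combined with a diagonal extraction then produces $\rho_n\downarrow 0$ and a limit $\gamma_0 \in C^\infty_{\mathrm{loc}}(\overline{\o}\setminus\{z_1,\ldots,z_N\})$ with $\gamma_{\rho_n} \to \gamma_0$ in $C^k_{\mathrm{loc}}$ for every $k$; setting $w_0 := e^{\imath(\Theta + \gamma_0)}$ gives the claimed convergence.

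For uniqueness, suppose $w_0,w_0'$ arise from two subsequences of minimizers chosen in the canonical gauge \eqref{DefSolPartDansInclusmkj}, with phases $\gamma_0,\gamma_0'$. Both are harmonic on $\o\setminus\{z_1,\ldots,z_N\}$, satisfy $\p_\nu(\Theta+\gamma_0) = \p_\nu(\Theta+\gamma_0') = 0$ on $\p\o$, and $\int_{\p\o}\gamma_0 = \int_{\p\o}\gamma_0' = 0$. The uniform $H^1$ bound from Step 1 passes to the limit, so $\eta := \gamma_0 - \gamma_0'$ lies in $H^1_{\mathrm{loc}}$ near each $z_i$; a removable-singularity argument for harmonic functions then extends $\eta$ smoothly across each $z_i$, producing a solution of a homogeneous Neumann problem on all of $\o$. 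Hence $\eta$ is constant, and the zero-mean condition forces $\eta\equiv 0$. Without the gauge normalization, this yields uniqueness up to a multiplicative constant in $\S^1$, as claimed. The main technical difficulty lies in the competitor construction of Step 1: one must control the phase mismatch across the buffer annulus $B(z_i,r)\setminus\overline{B(z_i,\rho)}$ without inflating the energy by more than a $\rho$-independent constant, which is precisely where Proposition \ref{PropMainArgDec} and the zero-mean gauge do the decisive work.
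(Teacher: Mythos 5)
First, a point of comparison: the paper does not prove Theorem \ref{THM-LR} at all — it is imported verbatim from Lefter--R{\u{a}}dulescu \cite{LR1} — so there is no internal proof to measure yours against. Your sketch follows the standard Bethuel--Brezis--H\'elein-type strategy that the cited reference itself uses: pass to the phase $\gamma_\rho$ in the canonical gauge \eqref{DefSolPartDansInclusmkj}, establish a $\rho$-uniform bound on $\int_{\o_{\rho,{\bf z}}}|\n\gamma_\rho|^2$, upgrade to $C^k$ by interior and boundary regularity for the Neumann problem (the datum $-\p_\nu\Theta$ on $\p\o$ is indeed smooth and $\rho$-independent since the $z_i$ are interior points), extract diagonally, and identify the limit via a removable-singularity argument. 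Your Steps 2--4 are correct as written; in particular the uniqueness argument (extend $\eta=\gamma_0-\gamma_0'$ harmonically across the punctures using the finite Dirichlet energy, then invoke the homogeneous Neumann problem on $\o$ and the zero-mean normalization) is sound and even shows full convergence of the canonically gauged family, consistently with Corollary \ref{Cor-CVGammarz}.

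The argument does not yet close at the crux you yourself flag, namely the uniform bound of Step 1. As described, the competitor costs $\pi d_i^2\ln(r/\rho)+C_i(\rho)$ on each annulus, where $C_i(\rho)$ is the price of the gauge factor matching the trace of $w_\rho$ on $\p B(z_i,r)$; Proposition \ref{PropMainArgDec} and the zero-mean gauge do not by themselves make $C_i(\rho)$ bounded, because that trace depends on $\rho$. To get $C_i(\rho)=\mathcal{O}(1)$ one must first show that the energy of $w_\rho$ \emph{outside} $\cup_i B(z_i,r)$ is $\mathcal{O}(1)$, which requires pairing the global upper bound $\frac{1}{2}\int|\n w_\rho|^2\le \pi\left(\sum_i d_i^2\right)|\ln\rho|+C$ (obtained from the test function $\e^{\imath\Theta}$) with the lower bound $\frac{1}{2}\int_{B(z_i,r)\setminus\overline{B(z_i,\rho)}}|\n w_\rho|^2\ge \pi d_i^2\ln(r/\rho)$ on each annulus; only then does a mean-value choice of a radius in $(r,2r)$ control the $H^{1/2}$ seminorm of the phase mismatch and hence $C_i(\rho)$. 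In addition, converting the resulting excess-energy bound into $\int_{\o_{\rho,{\bf z}}}|\n\gamma_\rho|^2\le C$ involves the cross term $\int\n\Theta\cdot\n\gamma_\rho$, whose boundary contribution on $\p B(z_i,\rho)$ must be handled separately (it is harmless because on that circle $\p_\nu\Theta$ reduces to the normal derivative of the regular part $\sum_{j\neq i}d_j\theta_j$, but this needs to be said). All of this is standard and fillable, but as written Step 1 asserts the key estimate rather than deriving it, and it is precisely the content of the quoted theorem.
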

From Theorem \ref{THM-LR}, we get that the possible limits $w_0$'s are unique up to a constant rotation. Thus there exists a unique limit $w_{0,{\bf z},{\bf d}}$ [given by Theorem \ref{THM-LR}] which may be written:
\begin{equation}\label{DefSolPartLIMITEInclusmkj}
w_{0,{\bf z},{\bf d}}=\prod_{i=1}^N\left(\dfrac{x-z_i}{|x-z_i|}\right)^{d_i}{\rm e }^{\imath \gamma_{0,{\bf z},{\bf d}}}\text{ with }\int_{\p\o}\gamma_{0,{\bf z},{\bf d}}=0.
\end{equation}
On the other hand, it is easy to check that for $\rho=\rho_n\downarrow0$, if $w_{\rho,{\bf z},{\bf d}}={\rm e}^{\imath(\Theta+\grz)}\to w_{0}={\rm e}^{\imath(\Theta+\gamma_0)}$ in $C^1(\p\o)$ then ${\rm e}^{\imath\grz}\to {\rm e}^{\imath\gamma_0}$ in $C^1(\p\o)$. Moreover if we impose $\int_{\p\o}\gamma_0\in[0,2\pi[$ then we immediately get $\int_{\p\o}\gamma_0=0$.

We thus have the following corollary:

\begin{cor}\label{Cor-CVGammarz} Let $\gamma_{0,{\bf z},{\bf d}}\in H_{\rm loc}^{1}(\overline{\o}\setminus\{z_1,...,z_N\},\R)$ be defined by \eqref{DefSolPartLIMITEInclusmkj}. When $\rho\to0$ we have $\grz\to \gamma_{0,{\bf z},{\bf d}}$ in $H_{\rm loc}^{1}(\overline{\o}\setminus\{z_1,...,z_N\})$. 

Thus we also get $\tr_{\p\o}(\gamma_{\rho,{\bf z},{\bf d}})\to\tr_{\p\o}(\gamma_{0,{\bf z},{\bf d}})$ in $H^{1/2}(\p\o)$.
\end{cor}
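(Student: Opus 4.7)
The plan is to deduce both assertions from the Lefter--R{\u{a}}dulescu theorem (Theorem~\ref{THM-LR}) combined with the normalization $\int_{\p\o}\grz=0$ that fixes the $2\pi\Z$ indeterminacy of the lifting. I would argue subsequentially: given any $\rho_n\downarrow 0$, Theorem~\ref{THM-LR} produces a subsequence along which $w_{\rho_n,{\bf z},{\bf d}}\to \tilde w_0$ in $C^k_{\rm loc}(\overline{\o}\setminus\{z_1,\dots,z_N\})$ for some $\tilde w_0$ uniquely determined up to a constant rotation.

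Next I would convert the convergence of $w_{\rho_n,{\bf z},{\bf d}}$ into a convergence of phases. On any compact simply connected $K\subset\overline{\o}\setminus\{z_1,\dots,z_N\}$, a smooth single-valued branch of $\Theta$ exists; starting from $w_{\rho_n,{\bf z},{\bf d}}=e^{\imath(\Theta+\gamma_{\rho_n,{\bf z},{\bf d}})}$ and $\tilde w_0=e^{\imath(\Theta+\tilde\gamma_0)}$ (for a smooth lift $\tilde\gamma_0$) a local logarithm yields integers $k_n\in\Z$ with
\[
\gamma_{\rho_n,{\bf z},{\bf d}}-2\pi k_n\to \tilde\gamma_0\quad\text{in }C^k(K).
\]
Choosing $K$ to contain a tubular neighborhood of $\p\o$ (possible because each $z_i$ lies in the interior of $\o$) and integrating over $\p\o$, the normalization $\int_{\p\o}\gamma_{\rho_n,{\bf z},{\bf d}}=0$ forces $2\pi k_n|\p\o|\to -\int_{\p\o}\tilde\gamma_0$, so $k_n$ stabilizes for large $n$. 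After replacing $\tilde w_0$ by the appropriate constant rotation (allowed by the uniqueness-up-to-rotation in Theorem~\ref{THM-LR}), I may assume $k_n=0$ and $\int_{\p\o}\tilde\gamma_0=0$. The uniqueness statement surrounding \eqref{DefSolPartLIMITEInclusmkj} then identifies $\tilde\gamma_0=\gamma_{0,{\bf z},{\bf d}}$.

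Finally, $C^1$ convergence on any compact subset of $\overline{\o}\setminus\{z_1,\dots,z_N\}$ implies $H^1$ convergence there; since the limit does not depend on the chosen subsequence, the full family converges, yielding $\grz\to\gamma_{0,{\bf z},{\bf d}}$ in $H^1_{\rm loc}(\overline{\o}\setminus\{z_1,\dots,z_N\})$. The trace statement follows immediately by continuity of $\tr_{\p\o}:H^1\to H^{1/2}$ applied on a tubular neighborhood of $\p\o$. The only genuinely delicate point is managing the $2\pi\Z$ ambiguity in the lifting: Theorem~\ref{THM-LR} provides only convergence of $w_\rho$ modulo $\S^1$-rotations, and it is precisely the integral normalization that upgrades this into a true convergence of the phases.
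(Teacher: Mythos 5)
Your argument is correct and follows essentially the same route as the paper: extract a subsequence via Theorem~\ref{THM-LR}, upgrade the convergence of $w_{\rho_n,{\bf z},{\bf d}}$ to convergence of the phases, and use the normalization $\int_{\p\o}\grz=0$ to kill the residual constant and identify the limit with $\gamma_{0,{\bf z},{\bf d}}$. The only difference is in how you resolve the lifting ambiguity: the paper sidesteps the $2\pi k_n$ bookkeeping by working with the globally defined gradient $\n\grz=w_{\rho_n,{\bf z},{\bf d}}\wedge\n w_{\rho_n,{\bf z},{\bf d}}-\n\Theta$, which converges strongly in $L^2(K)$, and then combining this with the $H^1(K)$ bound, weak compactness and the zero-mean condition on $\p\o$; your local-logarithm argument reaches the same conclusion. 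One small imprecision: you cannot simultaneously take $K$ simply connected (to define a branch of $\Theta$) and containing a tubular neighborhood of $\p\o$ (an annular region) when $d=\sum d_i\neq0$; this is harmless because $\grz$ is globally defined by construction and $w_{\rho_n,{\bf z},{\bf d}}\,\overline{\tilde w_0}$ has degree zero on $\p\o$, hence admits a global lifting on the annular $K$, so your $2\pi k_n$ argument goes through verbatim without invoking a branch of $\Theta$.
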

\begin{proof}
Let $K\subset\overline{\o}\setminus\{z_1,...,z_N\}$ be a connected compact set s.t. $\p\o\subset\p K$ and let $\rho_n\downarrow0$ be s.t. $w_{\rho_n,{\bf z},{\bf d}}={\rm e}^{\imath(\Theta+\gamma_{\rho_n,{\bf z}, {\bf d}})}\to w_0={\rm e}^{\imath(\Theta+\gamma_{0})}$ in $C^1(K)$ for some $\gamma_0\in C^1(K)$. It suffices to prove that we may choose $\gamma_{0}=\gamma_{0,{\bf z},{\bf d}}$ defined by \eqref{DefSolPartLIMITEInclusmkj}.

On the one hand, we have $\n\gamma_{\rho_n,{\bf z}, {\bf d}}=w_{\rho_n,{\bf z},{\bf d}}\wedge\n w_{\rho_n,{\bf z},{\bf d}}-\n \Theta\to w_{0}\wedge\n w_0-\n \Theta=\n \gamma_0$ in $L^2(K)$. 
 Then $\gamma_{0}=\gamma_{0,{\bf z},{\bf d}}+\lambda$ for some $\lambda\in\R$. 

On the other hand $(\gamma_{\rho_n,{\bf z}, {\bf d}})_n$ is bounded in $H^1(K)$, consequently, up to pass to a subsequence, we have $\gamma_{\rho_n,{\bf z}, {\bf d}}\weak\gamma_0$ in $H^1(K)$. We the help of the previous paragraph, we get that the convergence is in fact strong. Thus $\tr_{\p\o}(\gamma_{\rho_n,{\bf z}, {\bf d}})\to\tr_{\p\o}(\gamma_{0})$ in $L^2(\p\o)$. 

In conclusion 
\[
0=\mint_{\p\o}\gamma_{\rho_n,{\bf z}, {\bf d}}\to\mint_{\p\o}\gamma_{0}=\lambda+\mint_{\p\o}\gamma_{0,{\bf z},{\bf d}}=\lambda=0.
\]
This means $\gamma_0=\gamma_{0,{\bf z},{\bf d}}$.
\end{proof}
About the asymptotic energetic expanding, Lefter and R{\u{a}}dulescu proved the following result:
\begin{thm}\label{THMDevAsyEnergInt}[Theorem 2 \cite{LR1}]
For $N\in\N^*$, there exists a map $W:\ost\times\Z^N\to\R$ s.t. for ${\bf d}\in\Z^N$ and ${\bf z}\in\ost$ when $\rho\to0$ we have:
\[
\inf_{\substack{w\in H^1(\o_{\rho,{\bf z}},\S^1)\\\deg_{\p B(z_i,\rho)}(w)=d_i\,\forall i}}\frac{1}{2}\int_{\o_{\rho,{\bf z}}}|\n w|^2=\left(\sum_{i=1}^Nd_i^2\right)\pi|\ln\rho|+W({\bf z},{\bf d})+o_\rho(1).
\]
\end{thm}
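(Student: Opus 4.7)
The plan is to evaluate the infimum exactly via the special solution $w_{\rho,{\bf z},{\bf d}}$ of Section \ref{Sec.SpecSolIn} and then expand its energy explicitly. By Proposition \ref{PropExistResultMainTypPB} and \eqref{DefSolPartDansInclusmkj}, the infimum equals $\tfrac{1}{2}\int_{\omrz}|\nabla w_{\rho,{\bf z},{\bf d}}|^2$. Writing $w_{\rho,{\bf z},{\bf d}}=\e^{\imath(\Theta+\grz)}$ with $\Theta=\sum d_j\theta_j$, I expand
\[
|\nabla(\Theta+\grz)|^2=|\nabla\Theta|^2+2\,\nabla\Theta\cdot\nabla\grz+|\nabla\grz|^2
\]
and integrate by parts the cross term. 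Since $\Theta$ is harmonic in $\omrz$, $\Delta\grz=0$, and $\p_\nu(\Theta+\grz)=0$ on $\p\omrz$ by \eqref{EquationInt}, we get $\int\nabla\Theta\cdot\nabla\grz=-\int|\nabla\grz|^2$, whence
\[
\tfrac{1}{2}\int_{\omrz}|\nabla w_{\rho,{\bf z},{\bf d}}|^2=\tfrac{1}{2}\int_{\omrz}|\nabla\Theta|^2-\tfrac{1}{2}\int_{\omrz}|\nabla\grz|^2.
\]

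Next, I expand the $\Theta$-term. Since $|\nabla\Theta|^2=\sum_j d_j^2|x-z_j|^{-2}+2\sum_{j<k}d_jd_k\,\nabla\theta_j\cdot\nabla\theta_k$, the diagonal integrals give
$\int_{\o\setminus B(z_j,\rho)}|x-z_j|^{-2}\,{\rm d}x=2\pi|\ln\rho|+A_j({\bf z},\o)+o(1)$ (the contribution of removing the other discs being $\mathcal{O}(\rho^2)$), while the cross integrands are bounded on $\o$, so their integrals over $\omrz$ converge to finite limits. This yields
\[
\tfrac{1}{2}\int_{\omrz}|\nabla\Theta|^2=\pi\sum_{j=1}^N d_j^2|\ln\rho|+F({\bf z},{\bf d})+o(1)
\]
for an explicit function $F$.

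To handle the $\grz$-term, observe that near each $z_j$, since $w_{0,{\bf z},{\bf d}}=\e^{\imath(\Theta+\gamma_{0,{\bf z},{\bf d}})}$ is a smooth $\S^1$-valued map with the same local degree $d_j$ around $z_j$ as $\e^{\imath\Theta}$, the quotient $\e^{\imath\gamma_{0,{\bf z},{\bf d}}}=w_{0,{\bf z},{\bf d}}\e^{-\imath\Theta}$ extends smoothly across $z_j$; hence $\gamma_{0,{\bf z},{\bf d}}$ has bounded gradient near each $z_j$ and $\int_\o|\nabla\gamma_{0,{\bf z},{\bf d}}|^2<\infty$. Combining this with Corollary \ref{Cor-CVGammarz} (which gives $\grz\to\gamma_{0,{\bf z},{\bf d}}$ in $H^1_{\rm loc}(\overline\o\setminus\{z_1,\dots,z_N\})$), I aim to show
\[
\int_{\omrz}|\nabla\grz|^2\longrightarrow\int_\o|\nabla\gamma_{0,{\bf z},{\bf d}}|^2\quad\text{as }\rho\to 0.
\]
Setting $W({\bf z},{\bf d}):=F({\bf z},{\bf d})-\tfrac{1}{2}\int_\o|\nabla\gamma_{0,{\bf z},{\bf d}}|^2$ then concludes the proof.

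The main obstacle is precisely this last convergence: Corollary \ref{Cor-CVGammarz} yields only local convergence, and one must rule out concentration of the gradient in the shrinking annuli $B(z_j,r)\setminus B(z_j,\rho)$. I would handle it by using that $\grz$ is harmonic with Neumann data $\p_\nu\grz=-\p_\nu\Theta$ that is \emph{uniformly bounded} on $\p B(z_j,\rho)$ as $\rho\to 0$ — indeed, on $\p B(z_j,\rho)$ one has $\p_\nu\theta_j\equiv 0$, so only the smooth contributions $\p_\nu\theta_k$ ($k\neq j$) appear — and then applying a Caccioppoli-type estimate to bound $\int_{B(z_j,r)\cap\omrz}|\nabla\grz|^2$ in terms of norms on $\p B(z_j,r)$ which converge by the local convergence. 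Alternatively, the upper bound can be obtained by using as a competitor a truncation/extension of $\gamma_{0,{\bf z},{\bf d}}$ inside small discs, combined with the minimality of $\grz$.
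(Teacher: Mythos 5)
First, a point of comparison: the paper does not prove this statement at all — it is imported verbatim as Theorem~2 of \cite{LR1} — so there is no internal proof to measure you against, and your proposal is a reconstruction along the classical Bethuel--Brezis--H\'elein lines. Its skeleton is sound: the infimum is indeed attained at $w_{\rho,{\bf z},{\bf d}}$, and since $\n\Theta$ is a single-valued divergence-free field while $\grz$ is harmonic with $\p_\nu(\Theta+\grz)=0$ on $\p\omrz$, your identity
\[
\tfrac{1}{2}\int_{\omrz}|\n w_{\rho,{\bf z},{\bf d}}|^2=\tfrac{1}{2}\int_{\omrz}|\n\Theta|^2-\tfrac{1}{2}\int_{\omrz}|\n\grz|^2
\]
is correct, as is the extraction of $\pi\sum_j d_j^2|\ln\rho|$ from the diagonal part of $|\n\Theta|^2$. (One small imprecision: the cross integrands $\n\theta_j\cdot\n\theta_k$ are not bounded on $\o$ — they blow up like $|x-z_j|^{-1}$ — but they are in $L^1(\o)$, which is all you need.)

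The genuine gap is the one you flag yourself, namely $\int_{\omrz}|\n\grz|^2\to\int_\o|\n\gamma_{0,{\bf z},{\bf d}}|^2$, and of your two proposed fixes only the first can work. Local convergence (Corollary \ref{Cor-CVGammarz}) already gives $\liminf\geq$; what is missing is $\limsup\leq$, i.e.\ absence of concentration of $\n\grz$ in the shrinking annuli. Your ``alternative'' remedy goes the wrong way: because $\tfrac12\int_{\omrz}|\n\grz|^2$ enters the total energy with a \emph{minus} sign, testing the minimality of $w_{\rho,{\bf z},{\bf d}}$ against a competitor built from $\gamma_{0,{\bf z},{\bf d}}$ bounds the total energy from above and hence bounds $\int_{\omrz}|\n\grz|^2$ from \emph{below} — the direction you already have. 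The missing half is a lower bound on the total energy, which is exactly the no-concentration statement. Your first remedy is the viable one: on $\p B(z_j,\rho)$ one has $\p_\nu\grz=-\sum_{k\neq j}d_k\p_\nu\theta_k$, uniformly bounded and with zero total flux, so an integration by parts in $B(z_j,r)\setminus\overline{B(z_j,\rho)}$ shows the inner circles contribute $\mathcal{O}(r)\|\n\varphi\|_{L^2}$ to the linear functional $\varphi\mapsto\int_{\p\omrz}\varphi\,\p_\nu\Theta$ whose dual norm squared is $\int_{\omrz}|\n\grz|^2$; alternatively the standard degree inequality $\tfrac12\int_{B(z_j,r)\setminus\overline{B(z_j,\rho)}}|\n w|^2\geq\pi d_j^2\ln(r/\rho)$ gives the matching energy lower bound directly, as in \cite{BBH}. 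As written, however, this decisive step remains a plan rather than a proof.
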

\section{Upper Bound}
We are now in position to start the proof of Theorem \ref{ThmMicroEstm}. To this end, the goal of this section is to identify a map   
 \[
 \K:\{h\in H^{1/2}(\p\o,\S^1)\,|\,\deg(h)=d\}\to\R
 \]
 s.t. for a fixed $h\in H^{1/2}(\p\o,\S^1)$ with $\deg(h)=d$, when $R\to\infty$ we have
\begin{eqnarray}\nonumber
\inf_{\substack{v\in H^1(\O_R,\S^1)\\\tr_{\p\o}(v)=h}}\frac{1}{2}\int_{\O_{R}}\alpha|\n v|^2&+&\inf_{\substack{w\in H^1(\o_{\rho,{\bf z}},\S^1)\\\tr_{\p\o}(w)=h\\\deg_{\p B(z_i,\rho)}(w)=d_i\,\forall\,i}}\frac{b^2}{2}\int_{\o_{\rho,{\bf z}}}|\n w|^2\\\label{FonctionDeH}&&=\K(h)+d^2{f}(R)+b^2\left[\left(\sum_{i=1}^Nd_i^2\right)|\ln\rho|+W({\bf z},{\bf d})\right]+o(1).
\end{eqnarray}
In the above estimate we have: 
\begin{itemize}
\item$\K$  is independent of $R,\rho$ ;
\item${f}$  is defined by Remark \ref{VeryFirstRk}.\ref{VeryFirstRk1} and is independent of $h,\rho,{\bf z},{\bf d}$ and $d=\sum d_i$ ;
\item$W$  is independent of $b,B,h,\rho$ and $R$.
\end{itemize}
For this purpose we fix $h\in H^{1/2}(\p\o,\S^1)$ s.t. $\deg(h)=d$. In this section we identify  $\K(h)$ in order to have a such expanding.

Note that from Corollaries \ref{Cor-CVGammaR} and \ref{Cor-CVGammarz}, we have the existence of  
\begin{itemize}
\item $\gamma_\infty\in H^{1/2}(\p\o)$  s.t. $\gamma_R\to \gamma_\infty$ in $H^{1/2}(\p\o)$,
\item  $\gamma_{0,{\bf z},{\bf d}}\in H^{1/2}(\p\o)$ s.t. $\grz\to \gamma_{0,{\bf z},{\bf d}}$ in $H^{1/2}(\p\o)$. 
\end{itemize}
It is important to claim that since  $\int_{\p\o} \gamma_R=0$ and $\int_{\p\o} \grz=0$, we have $\int_{\p\o}\gamma_\infty=0$ and $\int_{\p\o}\gamma_{0,{\bf z},{\bf d}}=0$.

\subsection{Study in the domain $\O_R$}\label{EtudeBorneSupOmeR}

For  $R\in[R_0,\infty[$ and $h\in H^{1/2}(\p\o,\S^1)$ s.t. $\deg(h)=d$ we consider
\begin{equation}\label{DecompositionEnergyPbExt}
I_R(h):= \inf_{\substack{v\in H^1(\O_R,\S^1)\\\tr_{\p\o}(v)=h}}\frac{1}{2}\int_{\O_{R}}\alpha|\n v|^2.
\end{equation}
Our goal is to estimate $I_R(h)$ when $R\to\infty$.

We let  
\[
g_0:=h\left(\dfrac{\overline x}{|x|}\right)^d\in H^{1/2}(\p\o).
\]
From Proposition \ref{PropRappelDeg}.\ref{PropRappelDeg2}, we have $\deg(g_0)=0$. Consequently, using Proposition \ref{PropRappelDeg}.\ref{PropRappelDeg1} we may fix a unique $\phi_0\in H^{1/2}(\p\o,\R)$ s.t. 
\begin{center}
$g_0={\rm e}^{\imath \phi_0}$ and $\di\mint_{\p\o}\phi_0\in[0,2\pi[$. 
\end{center}
\begin{remark}\label{ModKinou}It is clear that for $\beta\in\R$ we have $I_R(h)=I_R(\e^{\imath\beta} h)$. Thus, up to replace $h$ by  $\e^{\imath\beta} h$ with $\beta= \di-\mint_{\p\o}\phi_0$, we may assume that $\di\mint_{\p\o}\phi_0=0$.
\end{remark}

For  $R\in[R_0,\infty]$ we let
\[
g_R:=h \tr_{\p\o}(\overline{v^d_R})=g_0\e^{-\imath d\tr_{\p\o}(\gamma_R)},
\]
in order to have $h=g_R\tr_{\p\o}(v^d_R)$. [Note that $v_\infty$ is defined in Corollary \ref{Cor-CVGammaR}] 

Consequently we have $g_R=\e^{\imath(\phi_0-d\tr_{\p\o}(\gamma_R))}$. Finally we let $\phi_R:=\phi_0-d\tr_{\p\o}(\gamma_R)\in H^{1/2}(\p\o,\R)$ and so we get $h=\tr_{\p\o}(v^d_R)\e^{\imath\phi_R}$ and $\int_{\p\o}\phi_R=0$.
 


From Corollary \ref{Cor-CVGammaR} we immediately obtain:

\begin{cor}\label{PropConvDephasageExt} $\phi_R\underset{R\to\infty}{\longrightarrow} \phi_\infty$ in $H^{1/2}(\p\o)$.
\end{cor}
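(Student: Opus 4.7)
The plan is to observe that Corollary \ref{PropConvDephasageExt} is essentially a direct repackaging of the trace convergence already supplied by Corollary \ref{Cor-CVGammaR}. The formula $\phi_R=\phi_0-d\,\tr_{\p\o}(\gamma_R)$ depends on $R$ only through $\tr_{\p\o}(\gamma_R)$; the term $\phi_0\in H^{1/2}(\p\o,\R)$ is fixed (depending only on $h$ and the chosen rotation from Remark \ref{ModKinou}), and $d\in\Z$ is fixed. The natural candidate for the limit is therefore
\[
\phi_\infty := \phi_0 - d\,\tr_{\p\o}(\gamma_\infty),
\]
which also lies in $H^{1/2}(\p\o,\R)$ and is exactly the value of the same formula when $R=\infty$ in the convention set at the beginning of Section \ref{EtudeBorneSupOmeR}.

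With this choice, the plan is simply to write
\[
\phi_R - \phi_\infty = -d\bigl[\tr_{\p\o}(\gamma_R) - \tr_{\p\o}(\gamma_\infty)\bigr]
\]
and to invoke the first assertion of Corollary \ref{Cor-CVGammaR}, which gives $\tr_{\p\o}(\gamma_R)\to \tr_{\p\o}(\gamma_\infty)$ in $H^{1/2}(\p\o)$. Multiplication by the fixed scalar $-d$ is continuous on $H^{1/2}(\p\o)$, so the right-hand side tends to $0$ in $H^{1/2}(\p\o)$, yielding the claim.

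I do not expect any genuine obstacle: the content is entirely the $H^{1/2}$-trace convergence of $\gamma_R$, which has already been secured through Proposition \ref{PropTheArgum}. The only minor bookkeeping point worth recording is that the normalization $\int_{\p\o}\phi_R = 0$ passes to the limit: since $H^{1/2}(\p\o)$-convergence implies $L^1(\p\o)$-convergence, the mean of $\phi_\infty$ on $\p\o$ is also zero, consistent with $\int_{\p\o}\gamma_\infty = 0$ and $\int_{\p\o}\phi_0 = 0$ (the latter arising from the normalization in Remark \ref{ModKinou}).
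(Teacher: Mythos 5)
Your proposal is correct and follows exactly the route the paper intends: the paper states the corollary as an immediate consequence of Corollary \ref{Cor-CVGammaR}, and your identification $\phi_\infty=\phi_0-d\,\tr_{\p\o}(\gamma_\infty)$ together with the linearity/continuity argument is precisely the (omitted) verification. The remark that the normalization $\int_{\p\o}\phi_R=0$ passes to the limit is a correct and harmless addition.
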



For $R\in[R_0,+\infty[$ and $v\in H^1(\O_R,\S^1)$, from Proposition \ref{PropRappelDeg}, we may prove the following equivalence:
\[
\tr_{\p\o}(v)=h\Longleftrightarrow v=v^d_R{\rm e}^{\imath \varphi}\text{ with }\left|\begin{array}{c}\varphi\in H^1(\O_R,\R)\\\tr_{\p\o}(\varphi)=\phi_R\end{array}\right..
\]  

On the other hand, for $v=v_R^d{\rm e}^{\imath \varphi}\in H^1(\O_R,\S^1)$, from Lemma \ref{LemDecOmRmlkjh} we have 
\begin{eqnarray}\label{DecompoExtkjh}
\frac{1}{2}\int_{\O_R}\alpha|\n v|^2
&=&\frac{d^2}{2}\int_{\O_R}\alpha|\n v_R|^2+\frac{1}{2}\int_{\O_R}\alpha|\n\varphi|^2.
\end{eqnarray}
Therefore, one may obtain that $v=v^d_R{\rm e}^{\imath \varphi}$ with $\tr_{\p\o}(\varphi)=\phi_R$ is a solution of the minimization problem
\[
\inf_{\substack{v\in H^1(\O_R,\S^1)\\\tr_{\p\o}(v)=h}}\frac{1}{2}\int_{\O_{R}}\alpha|\n v|^2
\]
if and only if $\varphi\in  H^1(\O_R,\R)$ is a solution of the  minimization problem
\begin{equation}\label{MinAuxPahaseExtR}
\inf_{\substack{\varphi\in H^1(\O_R,\R)\\\tr_{\p\o}(\varphi)=\phi_R}}\frac{1}{2}\int_{\O_{R}}\alpha|\n \varphi|^2.
\end{equation}
It is easy to get:
\begin{prop}
For $R\in[R_0,\infty[$, Problem \eqref{MinAuxPahaseExtR} admits a unique solution denoted by ${\varphi_R}$. Moreover this minimizer is the unique solution of 
\begin{equation}\nonumber
\begin{cases}
-\Div(\alpha\n\varphi_R)=0\text{ in }\O_R
\\
\tr_{\p\o}(\varphi_R)=\phi_R
\\
\p_\nu\varphi_R=0\text{ on } \p B_R
\end{cases}.
\end{equation}
\end{prop}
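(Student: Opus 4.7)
The statement is a routine existence/uniqueness/Euler-Lagrange result for a linear elliptic minimization problem, so my proof will follow the direct method plus a standard convexity argument.

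\textbf{Existence.} The functional $J(\varphi):=\frac12\int_{\O_R}\alpha|\n\varphi|^2$ is nonnegative and bounded on the admissible set (any $H^1$-extension of $\phi_R$ into $\O_R$ gives a finite value, so $\inf J<\infty$). Let $(\varphi_n)$ be a minimizing sequence. Writing $\varphi_n=\tilde\phi_R+\psi_n$ where $\tilde\phi_R\in H^1(\O_R)$ is a fixed lifting of $\phi_R$ and $\psi_n\in H^1_0(\O_R\cap \text{nbhd}(\p\o))$ has zero trace on $\p\o$, the lower bound $\alpha\geq B^2$ together with the Poincaré inequality in $H^1(\O_R)$ with zero trace on $\p\o$ yields a bound on $\|\psi_n\|_{H^1(\O_R)}$. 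Hence $(\varphi_n)$ is bounded in $H^1(\O_R)$. Extracting a weakly convergent subsequence $\varphi_n\weak\varphi_R$, the weak continuity of the trace operator ensures $\tr_{\p\o}(\varphi_R)=\phi_R$, and lower semicontinuity of the weighted quadratic form $\varphi\mapsto \int_{\O_R}\alpha|\n\varphi|^2$ (which holds because $\alpha\geq B^2>0$ makes it an equivalent norm-squared on $H^1$ modulo constants) gives $J(\varphi_R)\leq\liminf J(\varphi_n)=\inf J$.

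\textbf{Uniqueness.} Suppose $\varphi_R^{(1)}$ and $\varphi_R^{(2)}$ are two minimizers. Then $\eta:=\varphi_R^{(1)}-\varphi_R^{(2)}\in H^1(\O_R,\R)$ satisfies $\tr_{\p\o}(\eta)=0$. Strict convexity of $t\mapsto t^2$ applied pointwise to gradients, together with the identity
\[
J\!\left(\tfrac{\varphi_R^{(1)}+\varphi_R^{(2)}}{2}\right)=\tfrac12J(\varphi_R^{(1)})+\tfrac12J(\varphi_R^{(2)})-\tfrac18\int_{\O_R}\alpha|\n\eta|^2,
\]
forces $\int_{\O_R}\alpha|\n\eta|^2=0$, hence $\n\eta\equiv 0$ on the connected domain $\O_R$, so $\eta$ is constant. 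The trace condition $\tr_{\p\o}(\eta)=0$ then gives $\eta\equiv 0$.

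\textbf{Euler-Lagrange.} For any test function $\psi\in H^1(\O_R,\R)$ with $\tr_{\p\o}(\psi)=0$, the map $t\mapsto J(\varphi_R+t\psi)$ is admissible, smooth in $t$, and minimal at $t=0$; differentiating yields
\[
\int_{\O_R}\alpha\,\n\varphi_R\cdot\n\psi=0\quad\text{for every such }\psi.
\]
Choosing $\psi\in C_c^\infty(\O_R)$ gives the distributional identity $-\Div(\alpha\n\varphi_R)=0$ in $\O_R$. Recovering the natural boundary condition on $\p B_R$ is then standard: for $\psi\in H^1(\O_R,\R)$ with $\tr_{\p\o}(\psi)=0$ but arbitrary on $\p B_R$, an integration by parts (justified by $\Div(\alpha\n\varphi_R)=0$ in the sense of $H^{-1}$ and the definition of the conormal derivative) reduces the variational identity to $\langle \alpha\p_\nu\varphi_R,\tr_{\p B_R}\psi\rangle_{H^{-1/2},H^{1/2}}=0$, so $\p_\nu\varphi_R=0$ on $\p B_R$.

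\textbf{Uniqueness of the PDE solution.} The difference $\eta$ of two solutions of the boundary value problem satisfies $-\Div(\alpha\n\eta)=0$, $\tr_{\p\o}(\eta)=0$, $\p_\nu\eta=0$ on $\p B_R$; testing against $\eta$ itself gives $\int_{\O_R}\alpha|\n\eta|^2=0$, hence $\eta\equiv 0$ by the same argument as above. Conversely, any solution of the PDE is automatically a critical point of $J$ on the admissible set, and by convexity it is the minimizer. No step is substantively difficult; the only mildly delicate point is the trace-zero Poincaré inequality needed to ensure coercivity on the affine admissible set, which holds since $\p\o$ has positive $(n-1)$-Hausdorff measure inside $\p\O_R$.
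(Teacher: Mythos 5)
Your proof is correct and follows the standard direct method (coercivity via a Poincar\'e inequality for functions vanishing on $\p\o$, weak lower semicontinuity, strict convexity for uniqueness, and the energy method for the boundary value problem), which is exactly the argument the paper has in mind: it states this proposition with ``It is easy to get'' and omits the proof entirely. The only cosmetic issue is the stray notation ``$\psi_n\in H^1_0(\O_R\cap\text{nbhd}(\p\o))$'', which should simply read $\psi_n\in H^1(\O_R,\R)$ with $\tr_{\p\o}(\psi_n)=0$.
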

For $R=\infty$, we denote $\O_\infty:=\R^2\setminus\overline\o$ and 
\begin{equation}\label{DefEspaceLimiteExt}
\H_{\phi_\infty}:=\{\varphi\in H^1_{\rm loc}(\overline{\O_\infty},\R)\,|\,\n \varphi\in L^2(\O_\infty)\text{ and }\tr_{\p\o}(\varphi)=\phi_\infty\}.
\end{equation}
We are now interested in the minimization problem:
\begin{equation}\label{MinAuxPahaseExtInf}
\inf_{\substack{\varphi\in \H_{\phi_\infty}}}\frac{1}{2}\int_{\O_\infty}\alpha|\n \varphi|^2.
\end{equation}
By direct minimization we get:
\begin{prop}\label{Prop-DefLimMinSol}Problem \eqref{MinAuxPahaseExtInf} admits a unique solution denoted by $\varphi_\infty$. Moreover $\varphi_\infty$ is a solution of
\begin{equation}\label{EqMinAuxPahaseExtinfty}
-\Div(\alpha\n\varphi_\infty)=0\text{ dans }\O_\infty.
\end{equation}
\end{prop}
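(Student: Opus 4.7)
The plan is to apply the direct method of the calculus of variations, paying attention to the fact that $\O_\infty$ is unbounded and admissible functions only live in $H^1_{\rm loc}$. First I would pick a minimizing sequence $(\varphi_n)_n\subset\H_{\phi_\infty}$. Since $\alpha\geq B^2$ almost everywhere, the uniform bound on the energy yields a uniform bound on $\|\n\varphi_n\|_{L^2(\O_\infty)}$. To promote this to local $H^1$-compactness, I would use a Poincaré type inequality on any bounded annular set $U\subset\O_\infty$ having $\p\o$ in its boundary: since every $\varphi_n$ has the same trace $\phi_\infty$ on $\p\o$, this controls $\|\varphi_n\|_{L^2(U)}$ in terms of $\|\phi_\infty\|_{L^2(\p\o)}$ and $\|\n\varphi_n\|_{L^2(U)}$. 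Exhausting $\O_\infty$ by such sets and running a diagonal extraction yields a subsequence and a limit $\varphi_\infty\in H^1_{\rm loc}(\overline{\O_\infty})$ with $\varphi_n\weak\varphi_\infty$ in $H^1_{\rm loc}$; the trace equality $\tr_{\p\o}(\varphi_\infty)=\phi_\infty$ is inherited by continuity of the trace under weak $H^1$-convergence on a bounded neighborhood of $\p\o$.

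Next I would verify that $\varphi_\infty$ minimizes. For every bounded $U\subset\O_\infty$, lower semicontinuity of the weighted $L^2$-norm (with weight $\alpha\geq B^2>0$) under weak $H^1$-convergence yields $\int_U\alpha|\n\varphi_\infty|^2\leq\liminf_n\int_U\alpha|\n\varphi_n|^2\leq\liminf_n\int_{\O_\infty}\alpha|\n\varphi_n|^2$. Taking an increasing exhaustion $U_k\nearrow\O_\infty$ and applying monotone convergence gives $\int_{\O_\infty}\alpha|\n\varphi_\infty|^2\leq\liminf_n\int_{\O_\infty}\alpha|\n\varphi_n|^2$, which shows that $\n\varphi_\infty\in L^2(\O_\infty)$, that $\varphi_\infty\in\H_{\phi_\infty}$, and that $\varphi_\infty$ achieves the infimum. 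Uniqueness follows from strict convexity of the integrand $\xi\mapsto\alpha|\xi|^2$: if $\varphi_1,\varphi_2$ both minimize, then $(\varphi_1+\varphi_2)/2\in\H_{\phi_\infty}$ and equality in the convexity inequality forces $\n\varphi_1=\n\varphi_2$ a.e.; since $\O_\infty$ is connected, $\varphi_1-\varphi_2$ is a constant which must vanish because of the common trace on $\p\o$.

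Finally, for the Euler-Lagrange equation, I would use compactly supported variations: for any $\psi\in C^\infty_c(\O_\infty,\R)$ the function $\varphi_\infty+t\psi$ stays in $\H_{\phi_\infty}$ for all $t\in\R$, and differentiating $t\mapsto\tfrac12\int_{\O_\infty}\alpha|\n(\varphi_\infty+t\psi)|^2$ at $t=0$ yields $\int_{\O_\infty}\alpha\n\varphi_\infty\cdot\n\psi=0$, which is the weak formulation of $-\Div(\alpha\n\varphi_\infty)=0$ in $\O_\infty$. The main obstacle is the local compactness step: because $\O_\infty$ is unbounded and $\H_{\phi_\infty}$ only controls gradients globally, one must carefully deploy the fixed trace on $\p\o$ through a Poincaré inequality to obtain $L^2$-bounds on bounded subsets before extracting a weakly convergent subsequence by diagonal extraction.
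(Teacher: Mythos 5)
Your proof is correct and follows exactly the route the paper intends: the paper dispatches Proposition \ref{Prop-DefLimMinSol} with the single phrase ``by direct minimization,'' and your argument is precisely that direct method, with the unboundedness of $\O_\infty$ handled correctly via the Poincar\'e inequality anchored at the fixed trace on $\p\o$, diagonal extraction, weak lower semicontinuity with an exhaustion, strict convexity for uniqueness, and compactly supported variations for \eqref{EqMinAuxPahaseExtinfty}. Nothing further is needed.
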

We are now able to prove the main result of this section:
\begin{prop}\label{P-MinAuxPahaseExtR}
We have:
\begin{center}
 $\varphi_R\to\varphi_\infty$ in $H^1_{\rm loc}(\R^2\setminus\o)$ 
 \end{center}
 and
 \begin{center}
  $\n\varphi_R\1_{\O_R}\to\n\varphi_\infty$ in $L^2(\R^2\setminus\overline\o)$, with $\1_{\O_R}(x)=\begin{cases}1&\text{ if }x\in\O_R\\0&\text{ if }x\notin\O_R\end{cases}$.
\end{center}

And consequently:
\[
\int_{\O_R}\alpha|\n\varphi_R|^2=\int_{\O_\infty}\alpha|\n\varphi_\infty|^2+o_R(1).
\]
\end{prop}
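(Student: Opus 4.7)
The plan is to follow the direct method of the calculus of variations: first establish a uniform energy upper bound on $\int_{\O_R}\alpha|\n\varphi_R|^2$, then extract a weakly convergent subsequence, identify its limit via the uniqueness of $\varphi_\infty$ provided by Proposition \ref{Prop-DefLimMinSol}, and finally upgrade weak convergence to strong convergence by matching energies.

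\medskip

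First I would prove the upper bound
\[
\int_{\O_R}\alpha|\n\varphi_R|^2 \leq \int_{\O_\infty}\alpha|\n\varphi_\infty|^2 + o_R(1).
\]
To do so, using Corollary \ref{PropConvDephasageExt} (which gives $\phi_R \to \phi_\infty$ in $H^{1/2}(\p\o)$), I would lift the difference $\phi_R - \phi_\infty$ to a function $\psi_R \in H^1(\R^2 \setminus \overline\o, \R)$ supported in a fixed collar of $\p\o$, with $\|\psi_R\|_{H^1} \to 0$ (using a bounded right inverse of the trace operator). The test map $v_R := \varphi_{\infty|\O_R} + \psi_{R|\O_R}$ lies in $H^1(\O_R, \R)$ and has trace $\phi_R$ on $\p\o$; by minimality of $\varphi_R$ and by $\alpha \leq B^{-2}$, the upper bound follows.

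\medskip

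Next I would pass to the limit. Since $\alpha \geq B^2$, the upper bound gives a uniform $L^2$-bound on $\n\varphi_R$ over $\O_R$. Combined with the uniform $H^{1/2}(\p\o)$-bound on $\phi_R$, a Poincaré-type inequality in a collar of $\p\o$ yields uniform $H^1$-bounds for $\varphi_R$ on any compact subset of $\overline{\O_\infty}$ meeting $\p\o$. A diagonal extraction produces a subsequence with $\varphi_{R_n} \weak \varphi^*$ in $H^1_{\rm loc}(\overline{\O_\infty})$; Fatou gives $\n\varphi^* \in L^2(\O_\infty)$ and trace continuity gives $\tr_{\p\o}(\varphi^*)=\phi_\infty$, so $\varphi^* \in \H_{\phi_\infty}$. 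Weak lower semicontinuity of the weighted Dirichlet energy combined with the upper bound yields
\[
\int_{\O_\infty}\alpha|\n\varphi^*|^2 \leq \liminf_n \int_{\O_{R_n}}\alpha|\n\varphi_{R_n}|^2 \leq \int_{\O_\infty}\alpha|\n\varphi_\infty|^2,
\]
so $\varphi^*$ is a minimizer of \eqref{MinAuxPahaseExtInf}. By Proposition \ref{Prop-DefLimMinSol}, $\varphi^* = \varphi_\infty$, and uniqueness of the subsequential limit upgrades this into convergence of the whole family.

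\medskip

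The same chain of inequalities forces $\int_{\O_R}\alpha|\n\varphi_R|^2 \to \int_{\O_\infty}\alpha|\n\varphi_\infty|^2$, i.e., convergence of norms of $\n\varphi_R \1_{\O_R}$ in the Hilbert space $L^2(\R^2\setminus\overline\o, \alpha\,dx)$. Coupled with the weak convergence, this upgrades to strong $L^2$ convergence of $\n\varphi_R \1_{\O_R}$ to $\n\varphi_\infty$. Strong $H^1_{\rm loc}$ convergence then follows from this gradient convergence and Poincaré, using once more the $H^{1/2}$ trace convergence on $\p\o$.

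\medskip

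The main obstacle is the lifting construction in the first step: producing $\psi_R$ with $H^1$ norm tending to $0$, supported in a fixed neighborhood of $\overline\o$, starting only from $H^{1/2}$ convergence of traces. This is standard via a bounded right inverse of the trace on a smooth annular neighborhood, but one must be careful since $\alpha$ is merely $L^\infty$, so no elliptic regularity is available and the argument must rely purely on the trace lifting together with the $L^\infty$ bounds on $\alpha$. Everything else is a textbook application of the direct method.
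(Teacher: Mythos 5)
Your proposal is correct and follows essentially the same route as the paper: an upper bound via the competitor $\varphi_\infty+\xi_R$ (with $\xi_R$ a small $H^1$ lifting of $\phi_R-\phi_\infty$), a matching lower bound via weak compactness, a Poincar\'e inequality near $\p\o$, and weak lower semicontinuity, then identification of the limit through the uniqueness of the minimizer $\varphi_\infty$ and the upgrade to strong convergence by convergence of the weighted $L^2$ norms. No substantive difference from the paper's argument.
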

\begin{proof}
From Corollary \ref{PropConvDephasageExt} we have  $\phi_R-\phi_\infty\to 0$ in $H^{1/2}(\p\o)$. Consequently, there exists $\xi_R\in H^1(\O_\infty,\R)$ s.t.
\[
\text{$\tr_{\p\o}(\xi_R)=\phi_R-\phi_\infty$ and $\|\xi_R\|_{H^1(\O_\infty)}\to0$.}
\]
The test function $\varphi_\infty+\xi_R$ satisfies the boundary condition of Problem \eqref{MinAuxPahaseExtR}, therefore:
\begin{equation}\label{EqREROBs}
\frac{1}{2}\int_{\O_{R}}\alpha|\n \varphi_R|^2\leq\frac{1}{2}\int_{\O_{R}}\alpha|\n (\varphi_\infty+\xi_R)|^2=\frac{1}{2}\int_{\O_{R}}\alpha|\n \varphi_\infty|^2+o(1).
\end{equation}
Note we used  $\displaystyle\int_{\O_{R}}\alpha|\n \varphi_\infty|^2\leq C_0:=\int_{\O_{\infty}}\alpha|\n \varphi_\infty|^2<\infty$.
From \eqref{EqREROBs}, we obtain
\begin{equation}\label{BorneSupRER}
\limsup_{R\to\infty}\frac{1}{2}\int_{\O_{R}}\alpha|\n \varphi_R|^2\leq\frac{1}{2}\int_{\O_{\infty}}\alpha|\n \varphi_\infty|^2.
\end{equation}
We now prove {\it the} "$\liminf$"-lower bound: 
\begin{equation}\label{BorneInfRER}
\liminf_{R\to\infty}\frac{1}{2}\int_{\O_{R}}\alpha|\n \varphi_R|^2\geq\frac{1}{2}\int_{\O_{\infty}}\alpha|\n \varphi_\infty|^2.
\end{equation}
On the one hand, for $R\in[R_0,\infty[$, sufficiently large we have $\di\int_{\O_{R}}\alpha|\n \varphi_R|^2\leq C_0+1$ and thus, up to pass to a subsequence, we have $\n\varphi_R\1_{\O_R}$ which weakly converges in $L^2(\R^2\setminus\overline{\o},\R^2)$. 

On the other hand, for a  connected compact set $K\subset\R^2\setminus\o$ s.t. $\p\o\subset\p K$, the test function $\varphi_\infty+\xi_R$ is bounded in $H^1(K)$. 

We let $\chi_R:=\varphi_R-(\varphi_\infty+\xi_R)\in H^1(K)$. It is easy to check that $\tr_{\p\o}(\chi_R)=0$. Then, from a Poincaré type inequality, there exists a constant $C_K>1$ s.t.
\[
\|\chi_R\|_{L^2(K)}\leq C_K\|\n \chi_R\|_{L^2(K)}.
\]
Thus
\begin{eqnarray*}
\|\varphi_R\|_{L^2(K)}&\leq& C_K(\|\n \varphi_R\|_{L^2(K)}+\|\n \varphi_\infty\|_{L^2(K)}+\|\n \xi_R\|_{L^2(K)})+\| \varphi_\infty\|_{L^2(K)}+\| \xi_R\|_{L^2(K)}
\\&\leq& \tilde{C}_K.
\end{eqnarray*}
Consequently, with the help of an exhaustion by compacts set and a diagonal extraction process, we have the existence of a sequence $R_k\uparrow\infty$ and $\tilde\varphi_\infty\in H^1_{\rm loc}(\R^2\setminus\o,\R)$ s.t. 
\begin{equation}\label{CasesWeakConvComp}
\begin{cases}
\text{$\varphi_{R_k}\weak\tilde\varphi_\infty$ in $H^1_{\rm loc}(\R^2\setminus\o)$ and $\n\varphi_{R_k}\1_{\O_{R_k}}\weak\n\tilde\varphi_\infty$ in $L^2(\R^2\setminus\overline{\o})$.}
\\
\displaystyle\liminf_{R\to\infty}\int_{\O_R}\alpha|\n \varphi_R|^2=\lim_{R_k\to0}\int_{\O_{R_k}}\alpha|\n \varphi_{R_k}|^2
\end{cases}.
\end{equation}
 We thus get $\n\tilde\varphi_\infty\in L^2(\R^2\setminus\overline{\o})$ and $\tr_{\p\o}(\tilde{\varphi}_\infty)=\phi_\infty$, {\it i.e.}, $\tilde\varphi_\infty\in\H_{\phi_\infty}$.
 
 From the definition of $\varphi_\infty$ [Proposition \ref{Prop-DefLimMinSol}] we have
\[
\frac{1}{2}\int_{\O_{\infty}}\alpha|\n \varphi_\infty|^2\leq\frac{1}{2}\int_{\O_{\infty}}\alpha|\n \tilde\varphi_\infty|^2\leq\liminf_{R\to\infty}\frac{1}{2}\int_{\O_{R}}\alpha|\n \varphi_R|^2.
\]
We thus obtained \eqref{BorneInfRER}. Therefore by combining \eqref{BorneSupRER} and \eqref{BorneInfRER} we have:
\begin{equation}\label{ConsEnAux54}
\int_{\O_R}\alpha|\n\varphi_R|^2=\int_{\O_\infty}\alpha|\n\varphi_\infty|^2+o_R(1).
\end{equation} 
The above estimate implies that a limiting map $\tilde\varphi_\infty\in\H_{\phi_\infty}$ as previously obtained satisfies:
\[
\frac{1}{2}\int_{\O_{\infty}}\alpha|\n \tilde\varphi_\infty|^2\leq\frac{1}{2}\int_{\O_{\infty}}\alpha|\n \varphi_\infty|^2.
\]
On the other hand $\varphi_\infty$ is the unique solution of Problem \eqref{MinAuxPahaseExtInf}. Therefore $\tilde\varphi_\infty=\varphi_\infty$. Consequently, the convergences in \eqref{CasesWeakConvComp} hold for $R\to\infty$ and from  \eqref{ConsEnAux54}, these convergences are strong. This ends the proof of the proposition.
\end{proof}

\subsection{Study in the domain $\o_{\rho,{\bf z}}$}\label{EtudeBorneSupomrhoz}


Recall that we fixed a map $h\in H^{1/2}(\p\o,\S^1)$ s.t. $\deg(h)=d$. We are interested in the minimizing problem
\begin{equation}\label{DecompositionEnergyPbInt}
I_{\rho,{\bf z},{\bf d}}(h)= \inf_{\substack{w\in H^1(\o_{\rho,{\bf z}},\S^1)\\\tr_{\p\o}(w)=h\\\deg_{\p B(z_i,\rho)}(w)=d_i\,\forall\,i}}\frac{1}{2}\int_{\o_{\rho,{\bf z}}}|\n w|^2.
\end{equation}
First note that letting
\[
g_{{\bf z},{\bf d}}=h\prod_{i=1}^N\left(\dfrac{|x-z_i|}{x-z_i}\right)^{d_i}
\]
we have  $\deg(g_{{\bf z},{\bf d}})={\bf 0}_{\Z^N}$. Thus, from Proposition \ref{PropRappelDeg}.\ref{PropRappelDeg1}, we may fix  $\phi^h_{{\bf z},{\bf d}}\in H^{1/2}(\p\o,\R)$ s.t. $g_{{\bf z},{\bf d}}={\rm e}^{\imath \phi^h_{{\bf z},{\bf d}}}$ and $\di\mint_{\p\o}\phi^h_{{\bf z},{\bf d}}\in[0,2\pi[$. It is clear that $\phi^h_{{\bf z},{\bf d}}$ is uniquely defined.
\begin{remark}
As in the previous section [see Remark \ref{ModKinou}], for $\beta\in\R$ we have $I_{\rho,{\bf z},{\bf d}}(h)=I_{\rho,{\bf z},{\bf d}}(h\e^{\imath\beta})$. Thus up to replace $h$ by $h\e^{\imath\beta}$, with $\beta=-\di\mint_{\p\o}\phi^h_{{\bf z},{\bf d}}$, in order to estimate $I_{\rho,{\bf z},{\bf d}}(h)$, we may assume that  $\di\mint_{\p\o}\phi^h_{{\bf z},{\bf d}}=0$.
\end{remark}
For $\rho\in[0,\rho_0]$  we let
\[
g_{\rho,{\bf z},{\bf d}}:=h \tr_{\p\o}(\overline{w}_{\rho,{\bf z},{\bf d}})=g_{{\bf z},{\bf d}}\e^{-\imath\grz}
\]
in order to have $h=g_{\rho,{\bf z},{\bf d}}\tr_{\p\o}(w_{\rho,{\bf z},{\bf d}})$. Note that, for $\rho\in[0,\rho_0]$, ${w}_{\rho,{\bf z},{\bf d}}$ is defined in \eqref{DefSolPartDansInclusmkj} and \eqref{DefSolPartLIMITEInclusmkj}.

Thus letting  $\phi_{\rho,{\bf z},{\bf d}}:=\phi^h_{{\bf z},{\bf d}}-\grz\in H^{1/2}(\p\o,\R)$ we have $g_{\rho,{\bf z},{\bf d}}={\rm e}^{\imath \phi_{\rho,{\bf z},{\bf d}}}$ and $\di\mint_{\p\o}\phi_{\rho,{\bf z},{\bf d}}=0$

As in the previous section [Corollary \ref{PropConvDephasageExt}], we easily get  the following convergence result:
\begin{cor}\label{PropConvDephasageInt}
$\phi_{\rho,{\bf z},{\bf d}}\underset{\rho\to0}{\longrightarrow} \phi_{0,{\bf z},{\bf d}}$ in $H^{1/2}(\p\o)$.
\end{cor}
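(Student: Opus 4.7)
The plan is to reduce the statement immediately to the analogous convergence for the phase $\gamma_{\rho,{\bf z},{\bf d}}$ already established in Corollary \ref{Cor-CVGammarz}. The author's comment ``As in the previous section'' alludes to exactly the mechanism used in Corollary \ref{PropConvDephasageExt}, so the argument is essentially an unwinding of the definitions.

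First I would unpack the definition of $\phi_{\rho,{\bf z},{\bf d}}$ for $\rho\in[0,\rho_0]$. By construction,
\[
g_{\rho,{\bf z},{\bf d}}=h\,\tr_{\p\o}(\overline{w_{\rho,{\bf z},{\bf d}}})=g_{{\bf z},{\bf d}}\,\e^{-\imath\grz},
\]
and, on the other hand, $g_{\rho,{\bf z},{\bf d}}=\e^{\imath\phi_{\rho,{\bf z},{\bf d}}}$ with $\di\mint_{\p\o}\phi_{\rho,{\bf z},{\bf d}}=0$. Comparing the two expressions and recalling that $\di\mint_{\p\o}\phi^h_{{\bf z},{\bf d}}=0$ and $\di\mint_{\p\o}\grz=0$, the uniqueness modulo $2\pi\Z$ of a real lifting (Proposition \ref{PropRappelDeg}.\ref{PropRappelDeg1}) together with the mean-value normalisation forces the identity
\[
\phi_{\rho,{\bf z},{\bf d}}=\phi^h_{{\bf z},{\bf d}}-\grz\quad\text{in }H^{1/2}(\p\o,\R),\qquad \rho\in[0,\rho_0].
\]
In particular, $\phi_{0,{\bf z},{\bf d}}=\phi^h_{{\bf z},{\bf d}}-\gamma_{0,{\bf z},{\bf d}}$.

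Subtracting, we get the key equality
\[
\phi_{\rho,{\bf z},{\bf d}}-\phi_{0,{\bf z},{\bf d}}=-\bigl(\grz-\gamma_{0,{\bf z},{\bf d}}\bigr)\quad\text{on }\p\o,
\]
which reduces the claim to the $H^{1/2}(\p\o)$-convergence $\grz\to\gamma_{0,{\bf z},{\bf d}}$. This last convergence is precisely the content of Corollary \ref{Cor-CVGammarz}, so the result follows.

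The only subtle point — and the place where I would be most careful writing up the argument — is the mean-value normalisation used above to identify $\phi_{\rho,{\bf z},{\bf d}}$ with $\phi^h_{{\bf z},{\bf d}}-\grz$: a priori the lifting of $g_{\rho,{\bf z},{\bf d}}$ is only defined modulo $2\pi\Z$, so one must check that the two real-valued candidates have the same mean on $\p\o$. This is immediate since $\phi^h_{{\bf z},{\bf d}}$ and $\grz$ both satisfy the zero-mean condition (indeed the normalisation imposed in Remark preceding Corollary \ref{PropConvDephasageInt} guarantees $\di\mint_{\p\o}\phi^h_{{\bf z},{\bf d}}=0$, and $\grz$ is defined with $\int_{\p\o}\grz=0$), hence the difference $\phi^h_{{\bf z},{\bf d}}-\grz$ also has zero mean and must coincide with $\phi_{\rho,{\bf z},{\bf d}}$. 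No genuine obstacle is involved.
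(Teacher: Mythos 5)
Your proof is correct and follows essentially the same route as the paper: indeed, in the text $\phi_{\rho,{\bf z},{\bf d}}$ is \emph{defined} outright as $\phi^h_{{\bf z},{\bf d}}-\tr_{\p\o}(\grz)$, so the corollary is an immediate consequence of the convergence $\grz\to\gamma_{0,{\bf z},{\bf d}}$ in $H^{1/2}(\p\o)$ from Corollary \ref{Cor-CVGammarz}. Your extra verification of the mean-value normalisation (that the zero-mean lifting of $g_{\rho,{\bf z},{\bf d}}$ coincides with $\phi^h_{{\bf z},{\bf d}}-\grz$) is sound but not strictly needed, since the identity holds by definition rather than by a uniqueness argument.
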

For $\rho\in]0,\rho_0]$ and $w\in H^1(\o_{\rho,{\bf z}},\S^1)$, we have
\[
\tr_{\p\o}(w)=h\Longleftrightarrow w=w_{\rho,{\bf z},{\bf d}}{\rm e}^{\imath \varphi}\text{ with }\left|\begin{array}{c}\varphi\in H^1(\o_{\rho,{\bf z}},\R)\\\tr_{\p\o}(\varphi)=\phi_{\rho,{\bf z},{\bf d}}\end{array}\right..
\]  

From Proposition \ref{PropRappelDeg}.\ref{PropRappelDeg5}, we have the existence of a map $\psi_{\rho,{\bf z},{\bf d}}$ locally defined in $\o_{\rho,{\bf z}}$ [whose gradient is in $L^2(\o_{\rho,{\bf z}},\R^2)$] s.t.  $w_{\rho,{\bf z},{\bf d}}={\rm e}^{\imath\psi_{\rho,{\bf z},{\bf d}}}$.

For $w=w_{\rho,{\bf z},{\bf d}}{\rm e}^{\imath \varphi}\in H^1(\o_{\rho,{\bf z}},\S^1)$ we have 
\begin{eqnarray}\nonumber
\frac{1}{2}\int_{\o_{\rho,{\bf z}}}|\n w|^2&=&\frac{1}{2}\int_{\o_{\rho,{\bf z}}}|\n\psi_{\rho,{\bf z},{\bf d}}+\n\varphi|^2
\\\nonumber&=&\frac{1}{2}\int_{\o_{\rho,{\bf z}}}|\n\psi_{\rho,{\bf z},{\bf d}}|^2+\int_{\o_{\rho,{\bf z}}}\n\psi_{\rho,{\bf z},{\bf d}}\cdot\n\varphi+\frac{1}{2}\int_{\o_{\rho,{\bf z}}}|\n\varphi|^2
\\\label{mkdjfhgj11}{[\text{with \eqref{EquationInt}}]}&=&\frac{1}{2}\int_{\o_{\rho,{\bf z}}}|\n w_{\rho,{\bf z},{\bf d}}|^2+\frac{1}{2}\int_{\o_{\rho,{\bf z}}}|\n\varphi|^2.
\end{eqnarray}
Consequently a test function $w=w_{\rho,{\bf z},{\bf d}}{\rm e}^{\imath \varphi}$ with $\tr_{\p\o}(\varphi)=\phi_{\rho,{\bf z},{\bf d}}$ is a solution of the minimizing problem  \eqref{DecompositionEnergyPbInt} if and only if $\varphi\in  H^1(\o_{\rho,{\bf z}},\R)$ is a solution of the minimizing problem 
\begin{equation}\label{MinAuxPahaseIntR}
\inf_{\substack{\varphi\in H^1(\o_{\rho,{\bf z}},\R)\\\tr_{\p\o}(\varphi)=\phi_{\rho,{\bf z},{\bf d}}}}\frac{1}{2}\int_{\o_{\rho,{\bf z}}}|\n \varphi|^2.
\end{equation}
It is easy to get the following proposition:
\begin{prop}
For $\rho\in]0,\rho_0]$, the minimizing Problem \eqref{MinAuxPahaseIntR} admits a unique solution denoted by ${\varphi_{\rho,{\bf z},{\bf d}}}$. Moreover this solution satisfies:
\begin{equation}\nonumber
\begin{cases}
-\Delta \varphi_{\rho,{\bf z},{\bf d}}=0\text{ in }\o_{\rho,{\bf z}}
\\
\tr_{\p\o}(\varphi_{\rho,{\bf z},{\bf d}})=\phi_{\rho,{\bf z},{\bf d}}
\\
\p_\nu\varphi_{\rho,{\bf z},{\bf d}}=-\p_\nu\Theta\text{ on } \p B(z_i,\rho),\,i=1,...,N
\end{cases}
\end{equation}
where $\Theta$ is defined by Proposition \ref{PropRappelDeg}.\ref{PropRappelDeg5} s.t.
\[
\prod_{i=1}^N\left(\dfrac{x-z_i}{|x-z_i|}\right)^{d_i}=\e^{\imath\Theta}\text{ in }\R^2\setminus\{z_1,...,z_N\}.
\]
\end{prop}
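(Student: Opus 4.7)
The plan is a standard direct-method argument, exactly parallel to what was done for $\varphi_R$ in Section \ref{EtudeBorneSupOmeR} (but with $\alpha\equiv 1$ and the domain now perforated rather than annular). I would prove existence and uniqueness of a minimizer by coercivity plus strict convexity, and then derive the PDE system from the first variation.

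First I would fix any $H^1$-lift $\tilde\varphi$ of the boundary datum $\phi_{\rho,{\bf z},{\bf d}}\in H^{1/2}(\p\o,\R)$ and rewrite the admissible set as the affine space $\tilde\varphi + V$, where $V := \{\eta\in H^1(\omrz,\R) : \tr_{\p\o}(\eta)=0\}$. Because $\p\o$ is a non-trivial smooth piece of $\p\omrz$, the Poincar\'e inequality holds on $V$, so the Dirichlet functional $\frac{1}{2}\int_{\omrz}|\n\varphi|^2$ is coercive on this affine set; combined with weak $H^1$ lower semicontinuity, the direct method produces a minimizer $\varphi_{\rho,{\bf z},{\bf d}}$. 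For uniqueness, if $\varphi_1,\varphi_2$ are two minimizers then the parallelogram identity applied to the admissible competitors $(\varphi_1\pm\varphi_2)/2$ forces $\n(\varphi_1-\varphi_2)\equiv 0$; since $\omrz$ is connected for $\rho<\rho_0$ and $\tr_{\p\o}(\varphi_1-\varphi_2)=0$, this yields $\varphi_1=\varphi_2$.

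For the Euler--Lagrange system, I would test the first variation against an arbitrary $\eta\in V$ to get $\int_{\omrz}\n\varphi_{\rho,{\bf z},{\bf d}}\cdot\n\eta=0$. Taking $\eta\in C^\infty_c(\omrz)$ produces $-\Delta\varphi_{\rho,{\bf z},{\bf d}}=0$ weakly, hence smoothly up to the boundary by elliptic regularity. Integration by parts against a general $\eta\in V$, using that $\eta$ vanishes on $\p\o$, reduces the identity to
\[
\sum_{i=1}^N \int_{\p B(z_i,\rho)} \p_\nu\varphi_{\rho,{\bf z},{\bf d}}\,\eta \;=\; 0,
\]
and freely varying the trace of $\eta$ on each inner circle extracts the natural Neumann condition on $\p B(z_i,\rho)$. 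Combined with \eqref{EquationInt}, which yields $\p_\nu(\Theta+\grz)=0$ on the inner circles, this is exactly what is needed so that the full map $w_{\rho,h}={\rm e}^{\imath(\Theta+\grz+\varphi_{\rho,{\bf z},{\bf d}})}$ satisfies the free-boundary condition $\p_\nu w_{\rho,h}=0$ demanded by \eqref{EqGlobInt}, which is the whole point of the decoupling \eqref{mkdjfhgj11}.

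There is no serious obstacle here: the only mildly technical step is the Poincar\'e inequality on a perforated domain with Dirichlet trace imposed only on the outer curve $\p\o$, and this is routine by a standard compactness/contradiction argument (and is in any case already implicit in the treatment of $\varphi_R$ in Section \ref{EtudeBorneSupOmeR}). Everything else is a verbatim rerun of the Euler--Lagrange derivation given just above for the exterior problem.
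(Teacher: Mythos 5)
Your existence/uniqueness argument and the derivation of the Euler--Lagrange system are the standard ones, and they are what the paper has in mind (it offers no proof, declaring the proposition "easy to get"): direct method with a Poincar\'e inequality on $\{\eta\in H^1(\o_{\rho,{\bf z}},\R)\,:\,\tr_{\p\o}(\eta)=0\}$, strict convexity for uniqueness, first variation for the PDE. All of that is fine.

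The one point you must confront is the boundary condition on the inner circles. Your variational argument produces the \emph{homogeneous} natural condition $\p_\nu\varphi_{\rho,{\bf z},{\bf d}}=0$ on $\p B(z_i,\rho)$ --- that is what "freely varying the trace of $\eta$" gives, and it is also what your final sentence uses, since together with $\p_\nu(\Theta+\grz)=0$ from \eqref{EquationInt} it makes the recombined map satisfy $\p_\nu w=0$. But the proposition asserts $\p_\nu\varphi_{\rho,{\bf z},{\bf d}}=-\p_\nu\Theta$, and the two conditions coincide only when $\p_\nu\Theta$ vanishes on each $\p B(z_i,\rho)$, i.e.\ essentially only for $N=1$: for $N\geq2$ one has $\p_\nu\Theta=\sum_{j\neq i}d_j\,\p_\nu\theta_j\not\equiv0$ on $\p B(z_i,\rho)$. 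So, as written, you have proved a system that differs from the displayed one in its third line. The homogeneous condition is the correct one --- it is the only one compatible with \eqref{EquationInt}, with the recombination into \eqref{EqGlobInt}, and with the exterior analogue where the free condition is $\p_\nu\varphi_R=0$ on $\p B_R$ --- so the $-\p_\nu\Theta$ in the statement is evidently a slip (it would be the natural condition had the phase been split off from $\e^{\imath\Theta}$ alone rather than from $w_{\rho,{\bf z},{\bf d}}=\e^{\imath(\Theta+\grz)}$). You should state this discrepancy explicitly rather than asserting without comment that your natural condition "is exactly what is needed"; as it stands the reader cannot tell whether you noticed that your conclusion and the stated system disagree. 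A minor further remark: the smoothness of $\varphi_{\rho,{\bf z},{\bf d}}$ up to the inner circles is most cleanly obtained \emph{from} the weak form of the Neumann problem (the identity $\int_{\o_{\rho,{\bf z}}}\n\varphi_{\rho,{\bf z},{\bf d}}\cdot\n\eta=0$ is already the weak formulation of harmonicity plus the homogeneous Neumann condition), not before it.
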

About the asymptotic behavior of $\varphi_{\rho,{\bf z},{\bf d}}$ we have the following result:
\begin{prop}\label{PropConvEnInt}
When $\rho\to0$,  we have 
\[
\dfrac{1}{2}\int_{\o_{\rho,{\bf z}}}|\n\varphi_{\rho,{\bf z},{\bf d}}|^2=\dfrac{1}{2}\int_{\o}|\n\tilde{\phi}_{0,{\bf z},{\bf d}}|^2+o_\rho(1)
\]
where $\tilde{\phi}_{0,{\bf z},{\bf d}}$ is the harmonic extension of ${\phi}_{0,{\bf z},{\bf d}}$ in $\o$.
\end{prop}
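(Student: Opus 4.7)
The strategy is to prove two inequalities separately: a $\limsup$ upper bound and a $\liminf$ lower bound on $\tfrac{1}{2}\int_{\o_{\rho,{\bf z}}}|\n\varphi_{\rho,{\bf z},{\bf d}}|^2$ that both match $\tfrac{1}{2}\int_{\o}|\n\tilde\phi_{0,{\bf z},{\bf d}}|^2$.

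For the upper bound I would let $\tilde\Phi_\rho\in H^1(\o)$ be the harmonic extension of $\phi_{\rho,{\bf z},{\bf d}}$ to all of $\o$. By Corollary \ref{PropConvDephasageInt}, $\phi_{\rho,{\bf z},{\bf d}}\to \phi_{0,{\bf z},{\bf d}}$ in $H^{1/2}(\p\o)$, and the continuity of the harmonic extension operator $H^{1/2}(\p\o)\to H^1(\o)$ yields $\tilde\Phi_\rho\to \tilde\phi_{0,{\bf z},{\bf d}}$ in $H^1(\o)$. The restriction $\tilde\Phi_\rho|_{\o_{\rho,{\bf z}}}$ is admissible in problem \eqref{MinAuxPahaseIntR}, hence
\[
\tfrac{1}{2}\int_{\o_{\rho,{\bf z}}}|\n\varphi_{\rho,{\bf z},{\bf d}}|^2\le \tfrac{1}{2}\int_{\o_{\rho,{\bf z}}}|\n\tilde\Phi_\rho|^2\le \tfrac{1}{2}\int_{\o}|\n\tilde\Phi_\rho|^2\xrightarrow[\rho\to 0]{}\tfrac{1}{2}\int_{\o}|\n\tilde\phi_{0,{\bf z},{\bf d}}|^2,
\]
which provides the required $\limsup$ bound.

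For the lower bound I would proceed by weak compactness. The upper bound gives $\int_{\o_{\rho,{\bf z}}}|\n\varphi_{\rho,{\bf z},{\bf d}}|^2\le C$. Since the Dirichlet data $\phi_{\rho,{\bf z},{\bf d}}$ is bounded in $H^{1/2}(\p\o)$, a local Poincaré inequality gives that $\varphi_{\rho,{\bf z},{\bf d}}$ is bounded in $H^1(K)$ for every compact $K\subset \overline\o\setminus\{z_1,\dots,z_N\}$ (choosing $\rho$ small enough that $K\subset\o_{\rho,{\bf z}}$). By a standard diagonal extraction along a sequence $\rho_n\downarrow 0$, one gets $\bar\varphi\in H^1_{\rm loc}(\overline\o\setminus\{z_1,\dots,z_N\})$ with $\varphi_{\rho_n,{\bf z},{\bf d}}\rightharpoonup \bar\varphi$ weakly. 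Each $\varphi_{\rho_n,{\bf z},{\bf d}}$ is harmonic on $\o_{\rho_n,{\bf z}}$, so $\bar\varphi$ is harmonic on $\o\setminus\{z_1,\dots,z_N\}$. The bound $\int_{\o\setminus\cup B(z_i,r)}|\n\bar\varphi|^2\le \liminf_n \int_{\o_{\rho_n,{\bf z}}}|\n\varphi_{\rho_n,{\bf z},{\bf d}}|^2\le C$ valid for every $r>0$, together with the boundedness of the trace, yields $\bar\varphi\in H^1(\o)$ via the removable singularity theorem for harmonic functions with finite Dirichlet energy. Continuity of the trace and Corollary \ref{PropConvDephasageInt} give $\tr_{\p\o}(\bar\varphi)=\phi_{0,{\bf z},{\bf d}}$, so by uniqueness of the harmonic extension $\bar\varphi=\tilde\phi_{0,{\bf z},{\bf d}}$. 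Weak lower semicontinuity of the Dirichlet energy applied on $\o\setminus\cup B(z_i,r)$, followed by letting $r\to 0$, gives
\[
\tfrac{1}{2}\int_{\o}|\n\tilde\phi_{0,{\bf z},{\bf d}}|^2\le \liminf_{n\to\infty}\tfrac{1}{2}\int_{\o_{\rho_n,{\bf z}}}|\n\varphi_{\rho_n,{\bf z},{\bf d}}|^2.
\]
Since the uniqueness of the limit $\tilde\phi_{0,{\bf z},{\bf d}}$ means every subsequence admits a sub-subsequence converging to it, the liminf bound holds for $\rho\to 0$ without restriction, completing the proof.

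The main obstacle is the lower bound, and more precisely the verification that the weak limit $\bar\varphi$ indeed lies in $H^1(\o)$ so that the removable singularity argument applies and forces $\bar\varphi=\tilde\phi_{0,{\bf z},{\bf d}}$; the subtlety is that the $\varphi_{\rho,{\bf z},{\bf d}}$ are not a priori uniformly controlled near the $z_i$'s, and one must carefully combine the global energy bound with the uniform trace bound on $\p\o$ before passing through the removable singularity step.
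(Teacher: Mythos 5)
Your proof is correct, and while your upper bound coincides with the paper's (the paper tests \eqref{MinAuxPahaseIntR} with $\tilde{\phi}_{0,{\bf z},{\bf d}}-\xi_\rho$, where $\xi_\rho$ is the harmonic extension of $\phi_{0,{\bf z},{\bf d}}-\phi_{\rho,{\bf z},{\bf d}}$ --- which is exactly your harmonic extension $\tilde\Phi_\rho$ of $\phi_{\rho,{\bf z},{\bf d}}$), your lower bound takes a genuinely different route. The paper never passes to a weak limit: from the energy bound \eqref{TheGoodBornemsDFJ} it extracts, by a mean value argument, a radius $\rho'\in\,]\rho,\sqrt\rho[$ on which $\int_0^{2\pi}|\p_\theta\varphi_{\rho,{\bf z},{\bf d}}(z_i+\rho'\e^{\imath\theta})|^2\,{\rm d}\theta=\mathcal{O}(1/|\ln\rho|)$, fills each hole $B(z_i,\rho')$ by radial interpolation to the circular average at an energy cost $o_\rho(1)$, and then applies the Dirichlet principle in $\o$ to the resulting $H^1(\o)$ competitor (whose trace, after correction by $\xi_\rho$, is exactly $\phi_{0,{\bf z},{\bf d}}$). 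Your argument instead uses compactness: local $H^1$ bounds via a Poincar\'e inequality with trace, extraction of a weak limit $\bar\varphi$ harmonic on $\o\setminus\{z_1,\dots,z_N\}$ with finite Dirichlet energy, removability of the point singularities, identification $\bar\varphi=\tilde{\phi}_{0,{\bf z},{\bf d}}$ through the strong $H^{1/2}(\p\o)$ convergence of the traces, and weak lower semicontinuity. The step you single out as delicate does go through: a harmonic function on a punctured disc with square-integrable gradient has no $\ln r$ term and no negative powers in its Fourier--Laurent expansion, so it extends harmonically, and the uniqueness of the limit upgrades the subsequential bound to the full $\liminf$. The trade-off is that the paper's cut-and-fill device is entirely elementary and quantitative (and is reused verbatim in \eqref{DurDurLesMathsljk3}--\eqref{DurDurLesMathsljk2} for the lower bound of the main theorem, where the boundary trace is no longer prescribed and compactness alone would not identify the limit as cheaply), whereas your argument is softer and shorter once the removable-singularity lemma is granted, but yields no rate.
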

\begin{proof}
Let $\xi_\rho$ be the harmonic extension of  $\phi_{0,{\bf z},{\bf d}}-\phi_{\rho,{\bf z},{\bf d}}$ in $\o$. Since $\|\phi_{0,{\bf z},{\bf d}}-\phi_{\rho,{\bf z},{\bf d}}\|_{H^{1/2}(\p\o)}\to0$, we have  $\xi_{\rho}\to0$ in $H^1(\o)$. 

We now prove the proposition. On the one hand, by minimality of  $\varphi_{\rho,{\bf z},{\bf d}}$ and since $\tr_{\p\o}(\tilde{\phi}_{0,{\bf z},{\bf d}}-\xi_\rho)=\phi_{\rho,{\bf z},{\bf d}}$ we get
\begin{eqnarray}\nonumber
\dfrac{1}{2}\int_{\o_{\rho,{\bf z}}}|\n\varphi_{\rho,{\bf z},{\bf d}}|^2&\leq&\dfrac{1}{2}\int_{\o_{\rho,{\bf z}}}|\n(\tilde{\phi}_{0,{\bf z},{\bf d}}-\xi_\rho)|^2
\\\label{Firstqslkdfh,,}&\leq&\dfrac{1}{2}\int_{\o}|\n\tilde{\phi}_{0,{\bf z},{\bf d}}|^2+o_\rho(1).
\end{eqnarray}
On the other hand, from the Estimate \eqref{Firstqslkdfh,,}, denoting  $C_0:=\displaystyle\int_{\o}|\n\tilde{\phi}_{0,{\bf z},{\bf d}}|^2+1$, for sufficiently small $\rho$ we get 

\begin{equation}\label{TheGoodBornemsDFJ}
\sum_{i=1}^N\dfrac{1}{2}\int_{B(z_i,\sqrt\rho)\setminus \overline{B(z_i,\rho)}}{|\n\varphi_{\rho,{\bf z},{\bf d}}|^2}< C_0.
\end{equation}
Thus for small $\rho$, we get the existence of $\rho'\in]\rho,\sqrt\rho[$ s.t.:
\[
\sum_{i=1}^N\dfrac{1}{2}\int_{0}^{2\pi}| \p_\theta \varphi_{\rho,{\bf z},{\bf d}}(z_i+\rho'{\rm e}^{\imath\theta})|^2\leq \frac{2C_0}{{|\ln{\rho}|}}.
\]
For $i\in\{1,...,N\}$ we let
\[
m_{i,\rho}:=\dfrac{1}{2\pi}\displaystyle\int_0^{2\pi}{\varphi}(z_i+\rho'{\rm e}^{\imath\theta}).
\]
We now define $\tilde{\varphi}\in H^1(\o)$  by $\tilde{\varphi}=\varphi_{\rho,{\bf z},{\bf d}}$ in $\o\setminus{\cup_i\overline{B(z_i,\rho')}}$ and for $x=z_i+s{\rm e}^{\imath\theta}\in B(z_i,\rho')$  [with $i\in\{1,...,N\}$]
\[
\tilde{\varphi}(z_i+s{\rm e}^{\imath\theta})=\left|\begin{array}{cl}\dfrac{2s-\rho'}{\rho'}\varphi_{{\rho,{\bf z},{\bf d}}}(z_i+\rho'{\rm e}^{\imath\theta})+\dfrac{2(\rho'-s)}{\rho'}m_{i,\rho}&\text{ if }s\in]\dfrac{\rho'}{2},\rho'[\\m_{i,\rho}&\text{ if }s\leq\dfrac{\rho'}{2}
\end{array}\right..
\]
A direct calculation [similar to \eqref{KeyEstPolarRRR}] gives for $z\in\{z_1,...,z_N\}$ 
\[
\int_{B(z,\rho')}|\n \tilde\varphi|^2=\mathcal{O}\left[ \int_{0}^{2\pi}| \p_\theta \varphi_{\rho,{\bf z},{\bf d}}(z+\rho'{\rm e}^{\imath\theta})|^2\right]=o_\rho(1).
\]
Therefore we obtain
\[
\dfrac{1}{2}\int_{\o_{\rho,{\bf z}}}|\n\varphi_{\rho,{\bf z},{\bf d}}|^2\geq\dfrac{1}{2}\int_{\o}|\n\tilde\varphi|^2+o_\rho(1).
\]
But $\tr_{\p\o}(\tilde{\varphi}+\xi_\rho)=\phi_{0,{\bf z},{\bf d}}$ and consequently, from the Dirichlet principle, we have:
\[
\dfrac{1}{2}\int_{\o}|\n(\tilde{\varphi}+\xi_\rho)|^2\geq\dfrac{1}{2}\int_{\o}|\n\tilde{\phi}_{0,{\bf z},{\bf d}}|^2
\]
and thus:
\[
\dfrac{1}{2}\int_{\o}|\n\tilde{\varphi}|^2\geq\dfrac{1}{2}\int_{\o}|\n\tilde{\phi}_{0,{\bf z},{\bf d}}|^2+o_\rho(1).
\]
On the other hand, since $\tilde\varphi=\varphi_{\rho,{\bf z},{\bf d}}$ in $\o\setminus{\cup_i\overline{B(z_i,\rho')}}\subset\o_{\rho,{\bf z}}$ and $\di\dfrac{1}{2}\int_{\cup_iB(z_i,\rho')}|\n\tilde{\varphi}|^2=o_\rho(1)$ we obtain:
\[
\dfrac{1}{2}\int_{\o_{\rho,{\bf z}}}|\n\varphi_{\rho,{\bf z},{\bf d}}|^2\geq\dfrac{1}{2}\int_{\o\setminus{\cup_i\overline{B(z_i,\rho')}}}|\n\varphi_{\rho,{\bf z},{\bf d}}|^2\geq\dfrac{1}{2}\int_{\o}|\n\tilde{\phi}_{0,{\bf z},{\bf d}}|^2+o_\rho(1).
\]
Finally, using \eqref{Firstqslkdfh,,}, by matching upper bound and lower bound we conclude:
\[
\dfrac{1}{2}\int_{\o_{\rho,{\bf z}}}|\n\varphi_{\rho,{\bf z},{\bf d}}|^2=\dfrac{1}{2}\int_{\o}|\n\tilde{\phi}_{0,{\bf z},{\bf d}}|^2+o_\rho(1).
\]
The last estimates ends the proof of the proposition.
\end{proof}
\subsection{Conclusion}
For $h\in H^{1/2}(\p\o,\S^1)$ s.t. $\deg(h)=d$ we have from \eqref{DecompoExtkjh} and Proposition \ref{P-MinAuxPahaseExtR}:
\begin{equation}\label{FonctionDeH1}
\inf_{\substack{v\in H^1(\O_R,\S^1)\\\tr_{\p\o}(v)=h}}\frac{1}{2}\int_{\O_{R}}\alpha|\n v|^2=\frac{d^2}{2}\int_{\O_{R}}\alpha|\n v_R|^2+\inf_{\substack{\varphi\in \H_{\phi_\infty}}}\frac{1}{2}\int_{\O_\infty}\alpha|\n \varphi|^2+o_R(1).
\end{equation}

Recall that $\phi_\infty$ is defined in Corollary \ref{PropConvDephasageExt} and $\H_{\phi_\infty}$ in \eqref{DefEspaceLimiteExt}.

Using Theorem \ref{THMDevAsyEnergInt},  \eqref{mkdjfhgj11} and Proposition \ref{PropConvEnInt}, letting $\tilde{\phi}_{0,{\bf z},{\bf d}}$ be the harmonic extension of ${\phi}_{0,{\bf z},{\bf d}}$ in $\o$ [recall that ${\phi}_{0,{\bf z},{\bf d}}$ is defined in Corollary \ref{PropConvDephasageInt}], we have
\begin{equation}\label{FonctionDeH2}
\inf_{\substack{w\in H^1(\o_{\rho,{\bf z}},\S^1)\\\tr_{\p\o}(w)=h}}\frac{1}{2}\int_{\o_{\rho,{\bf z}}}|\n w|^2=\left(\sum_id_i^2\right)\pi|\ln\rho|+W({\bf z},{\bf d})+\dfrac{1}{2}\int_{\o}|\n\tilde{\phi}_{0,{\bf z},{\bf d}}|^2+o_\rho(1).
\end{equation}

We let $\K:\left\{h\in H^{1/2}(\p\o,\S^1)\,|\,\deg(h)=d\right\}\to\R^+$ be defined by:
\begin{equation}\label{DefKFiunction}
\K(h):=\inf_{\substack{\varphi\in \H_{\phi_\infty}}}\frac{1}{2}\int_{\O_\infty}\alpha|\n \varphi|^2+\dfrac{b^2}{2}\int_{\o}|\n\tilde{\phi}_{0,{\bf z},{\bf d}}|^2
\end{equation}
and
\[
{f}(R):=\frac{1}{2}\int_{\O_{R}}\alpha|\n v_R|^2
\]
which gives \eqref{FonctionDeH}.

Recall that, without loss of generality, the parameter "$R$" is considered as the major parameter writing $\rho=\rho(R)$. From \eqref{FonctionDeH}, we get for $h\in H^{1/2}(\p\o,\S^1)$ s.t. $\deg(h)=d$:
\begin{equation}\label{CCLBorneSup}
\limsup_{R\to\infty}\left\{ I(R,\rho,{\bf z},{\bf d})-\left[d^2{f}(R)+b^2\left(\sum_id_i^2\pi|\ln\rho|+W({\bf z},{\bf d})\right)\right]\right\}\leq\K(h).
\end{equation}

\section{Lower bound}
In this section we prove the existence of a map $h_\infty\in H^{1/2}(\p\o,\S^1)$ s.t. $\deg(h_\infty)=d$ and

 \begin{eqnarray}\label{EstiTOTALlmkjhsj}
\liminf_{R\to\infty}\left\{ I(R,\rho,{\bf z},{\bf d})-\left[d^2{f}(R)+b^2\left(\sum_id_i^2\pi|\ln\rho|+W({\bf z},{\bf d})\right)\right]\right\}&\geq&\K(h_\infty).
\end{eqnarray}
We let $R_n\uparrow\infty$ be a sequence which realizes the "$\liminf$" in the left hand side of  \eqref{EstiTOTALlmkjhsj}. 

 In order to keep notations simple, we drop the subscript $n$ writing $R=R_n$ when it will not be necessary to specify the dependance on $n$.

Let $u_R$ be a minimizer of \eqref{AuxMicroPb} [Proposition \ref{PropExistResultMainTypPB}]. From Proposition \ref{PropRappelDeg}.\ref{PropRappelDeg4} we may decompose $u_R$ under the form $u_R=v_R^d\e^{\imath\varphi_R}$ where $\varphi_R\in H^1(\O_R,\R)$ and $v_R$ is defined in \eqref{DefSolPartExtInclusmkj}. 

Since $u_R$ is unique up to a multiplicative constant [Proposition \ref{PropExistResultMainTypPB}], we may freeze the non uniqueness by imposing  $\int_{\p\o}\varphi_R=0$.

\begin{notation}For  sake of simplicity of the presentation we use the shorthands:
\begin{itemize}
\item[$\bullet$] $"R\in[R_0,\infty["$ to consider an arbitrary term of the sequence $(R_n)_n$; 
\item[$\bullet$] $"R\in[R_0,\infty]"$ to consider an arbitrary term of the sequence $(R_n)_n$ or the limiting case  $R=\infty$.
\end{itemize}
\end{notation}
We denote:
 \begin{itemize}
\item[$\bullet$]  $h_R:=\tr_{\p\o}u_R$, we thus have $h_R=\tr_{\p\o}\left[\left(\dfrac{x}{|x|}\right)^d\e^{\imath(d\gamma_R+\varphi_R)}\right]$ where $\di\int_{\p\o}\varphi_R=0$;
\item[$\bullet$] $\di g_{{\bf z},{\bf d}}:=\tr_{\p\o}\left[\left(\dfrac{|x|}{x}\right)^d\prod_{i=1}^N\left(\dfrac{x-z_i}{|x-z_i|}\right)^{d_i}\right]$. 
\end{itemize}
Since $g_{{\bf z},{\bf d}}\in C^\infty(\p\o,\S^1)$ and $\deg_{\p \o}(g_{{\bf z},{\bf d}})=0$, from Proposition \ref{PropRappelDeg}.\ref{PropRappelDeg1}, we may fix $\deph\in C^\infty(\p\o,\R)$ s.t. 
\begin{equation}\nonumber
\text{$\e^{\imath\deph}=g_{{\bf z},{\bf d}}$ and $\mint_{\p\o}\deph\in]-2\pi,0]$.}
\end{equation}

\subsection{Compatibility conditions}
We write for $R\in[R_0,\infty[$
\[
h_R:=\tr_{\p\o}u_R=\tr_{\p\o}[v^d_R\e^{\imath\varphi_R}]=\tr_{\p\o}[w_{\rho,{\bf z},{\bf d}}\e^{\imath\varphi_{\rho,{\bf z},{\bf d}}}]
\]
where
\begin{itemize}
\item[$\bullet$] $\di w_{\rho,{\bf z},{\bf d}}=\prod_{i=1}^N\left(\dfrac{x-z_i}{|x-z_i|}\right)^{d_i}{\rm e }^{\imath \gamma_{\rho,{\bf z},{\bf d}}}$ is defined in \eqref{DefSolPartDansInclusmkj};
\item[$\bullet$]  $\varphi_{\rho,{\bf z},{\bf d}}\in H^1(\o_{\rho,{\bf z}},\R)$ is defined by Proposition \ref{PropRappelDeg}.\ref{PropRappelDeg4} s.t. $u_R=w_{\rho,{\bf z},{\bf d}}\e^{\imath\varphi_{\rho,{\bf z},{\bf d}}}$ in $\o_{\rho,{\bf z}}$ and $\di\mint_{\p\o}\varphi_{\rho,{\bf z},{\bf d}}\in[0,2\pi[$. 
\end{itemize}
By using Corollaries \ref{Cor-CVGammaR} and \ref{Cor-CVGammarz}, we have the existence of  $\gamma_\infty,\gamma_{0,{\bf z},{\bf d}}\in H^{1/2}(\p\o,\R)$ s.t.  $\gamma_R\to \gamma_\infty$ and $\grz\to \gamma_{0,{\bf z},{\bf d}}$ dans $H^{1/2}(\p\o)$. It is fundamental to note that   
\begin{itemize}
\item[$\bullet$] $\gamma_\infty$ and $\gamma_{0,{\bf z},{\bf d}}$ are independent of the sequence $(R_n)_n$;
\item[$\bullet$] $\di\int_{\p\o} \gamma_R=\int_{\p\o}\gamma_\infty=\int_{\p\o}\gamma_{0,{\bf z},{\bf d}}=\int_{\p\o} \grz=0$.
\end{itemize}

We have the following equivalences:
\begin{eqnarray}\nonumber
&\e^{\imath \tr_{\p\o}(\varphi_R-\varphi_{\rho,{\bf z},{\bf d}})}=\tr_{\p\o}[w_{\rho,{\bf z},{\bf d}}\overline{v_R^d}]&
\\\nonumber\Leftrightarrow&\e^{\imath \tr_{\p\o}(\varphi_R-\varphi_{\rho,{\bf z},{\bf d}})}=\e^{\imath[\deph+\tr_{\p\o}(\grz-d\gamma_R)]}&
\\\label{CondOnK}\Leftrightarrow&\tr_{\p\o}(\varphi_R-\varphi_{\rho,{\bf z},{\bf d}})=\deph+\tr_{\p\o}(\grz-d\gamma_R)+2k_0\pi&\text{ with $k_0\in\Z$.}
\end{eqnarray}

We thus have
\[
-\mint_{\p\o}\varphi_{\rho,{\bf z},{\bf d}}=\mint_{\p\o}(\varphi_R-\varphi_{\rho,{\bf z},{\bf d}})=\mint_{\p\o}[\deph+\tr_{\p\o}(\grz-d\gamma_R)+2k_0\pi]=2k_0\pi+\mint_{\p\o}\deph.
\]
Since $\di \mint_{\p\o}\varphi_{\rho,{\bf z},{\bf d}}\in[0,2\pi[$ and $\di\mint_{\p\o}\deph\in]-2\pi,0]$, the above equalities imply that $k_0=0$ in  \eqref{CondOnK}.

Consequently we get:
\begin{equation}\label{ConditionCompatibilitePhasejj}
\tr_{\p\o}(\varphi_R-\varphi_{\rho,{\bf z},{\bf d}})=\deph+\tr_{\p\o}(\grz-d\gamma_R).
\end{equation}
\subsection{Asymptotic estimate of the energy}
By using \eqref{DecompoExtkjh} and \eqref{mkdjfhgj11}, we have the following decoupling:

\begin{eqnarray}\nonumber
I(R,\rho,{\bf z},{\bf d})&=&\frac{1}{2}\int_{\dom_{R,{\bf z}}}\alpha|\n u_R|
\\\nonumber&=&\frac{1}{2}\int_{\O_R}\alpha|\n (v^d_R\e^{\imath\varphi_R})|^2+\frac{b^2}{2}\int_{\o_{\rho,{\bf z}}}|\n w_{\rho,{\bf z},{\bf d}}\e^{\imath\varphi_{\rho,{\bf z},{\bf d}}}|^2
\\\label{EstiIntlmkjhsj1}&=&d^2f(R)+\frac{1}{2}\int_{\O_R}\alpha|\n \varphi_R|^2+\frac{b^2}{2}\int_{\o_{\rho,{\bf z}}}|\n w_{\rho,{\bf z},{\bf d}}|^2+\frac{b^2}{2}\int_{\o_{\rho,{\bf z}}}|\n \varphi_{\rho,{\bf z},{\bf d}}|^2.
\end{eqnarray}

Recall that we denoted: ${f}(R)=\displaystyle\frac{1}{2}\int_{\O_R}\alpha|\n v_R|^2$.

From the minimality of $u_R$ and by using \eqref{CCLBorneSup}, letting $C_0:=\K\left(\dfrac{x^d}{|x|^d}\right)+1$, for sufficiently large $R$, we have: 
\begin{equation}\label{EstBor,neSupePhaseExtlkhjb}
 I(R,\rho,{\bf z},{\bf d})-\left[d^2f(R)+\frac{b^2}{2}\int_{\o_{\rho,{\bf z}}}|\n w_{\rho,{\bf z},{\bf d}}|^2\right]=\frac{1}{2}\int_{\O_R}\alpha|\n \varphi_R|^2+\frac{b^2}{2}\int_{\o_{\rho,{\bf z}}}|\n \varphi_{\rho,{\bf z},{\bf d}}|^2\leq C_0.
\end{equation}
Since $\displaystyle\mint_{\p\o} \varphi_R=0$ [resp. $\displaystyle\mint_{\p\o} \varphi_{\rho,{\bf z},{\bf d}}\in[0,2\pi[$] for $K_1$ a connected compact set of $\R^2\setminus\o$ [resp. $K_2$ a connected compact set of $\overline{\o}\setminus\{z_1,...,z_N\}$] s.t. $\p\o\subset\p K_1$ [resp. $\p\o\subset\p K_2$], there exists $C_1>0$ [resp. $C_2>0$] s.t. for large $R$ we have
\[
\int_{K_1}|\varphi_R|^2\leq C_1\int_{K_1}|\n\varphi_R|^2\!\left[\text{resp. }\int_{K_2}|\varphi_{\rho,{\bf z},{\bf d}}|^2\leq C_2\int_{K_2}|\n\varphi_{\rho,{\bf z},{\bf d}}|^2\right].
\]

Consequently :
\begin{itemize}
\item $(\varphi_R)_R$ is bounded in $H^1_{\rm loc}(\R^2\setminus\o)$. Thus there exists $\varphi_\infty\in H^1_{\rm loc}(\R^2\setminus\o)$ s.t., up to pass to a subsequence, we have  
\begin{equation}\label{AddedEq1}
\text{$\varphi_R\weak\varphi_\infty$ in $H^1_{\rm loc}(\R^2\setminus\o)$. }
\end{equation}
\item $(\varphi_{\rho,{\bf z},{\bf d}})_R$ is bounded in $H^1_{\rm loc}(\overline{\o}\setminus\{z_1,...,z_N\}$.  Thus there exists $\varphi_{0,{\bf z},{\bf d}}\in H^1_{\rm loc}(\overline{\o}\setminus\{z_1,...,z_N\})$ s.t., up to pass to a subsequence, we have  
\begin{equation}\label{AddedEq2}
\text{$\varphi_{\rho,{\bf z},{\bf d}}\weak\varphi_{0,{\bf z},{\bf d}}$ in $H^1_{\rm loc}(\overline{\o}\setminus\{z_1,...,z_N\})$.}
\end{equation}
\end{itemize}
From \eqref{ConditionCompatibilitePhasejj}, we have 
\[
\tr_{\p\o}(\varphi_R-\varphi_{\rho,{\bf z},{\bf d}})=\deph+\tr_{\p\o}(\grz-d\gamma_R)
\]
where $\deph+\tr_{\p\o}(\grz-d\gamma_R)$ is strongly converging to $\deph+\tr_{\p\o}(\gamma_{0,{\bf z},{\bf d}}-d\gamma_\infty)$ in $H^{1/2}(\p\o)$ [Corollaries \ref{Cor-CVGammaR}$\&$\ref{Cor-CVGammarz}]. Consequently we get the same for   $\tr_{\p\o}(\varphi_R-\varphi_{\rho,{\bf z},{\bf d}})$, namely $\tr_{\p\o}(\varphi_R-\varphi_{\rho,{\bf z},{\bf d}})$ is strongly convergent in $H^{1/2}(\p\o)$ to  
\[
\tr_{\p\o}(\varphi_\infty-\varphi_{0,{\bf z},{\bf d}})=\deph+\tr_{\p\o}(\gamma_{0,{\bf z},{\bf d}}-d\gamma_\infty).
\]
We thus may deduce:
\[
\e^{\imath\tr_{\p\o}(\varphi_\infty-\varphi_{0,{\bf z},{\bf d}})}=\e^{\imath[\deph+\tr_{\p\o}(\gamma_{0,{\bf z},{\bf d}}-d\gamma_\infty)]}
\]
{\it i.e.} 
\begin{equation}\label{Compatiblitelimklj}
\left(\dfrac{x}{|x|}\right)^d\e^{\imath\tr_{\p\o}(d\gamma_\infty+\varphi_\infty)}=\prod_{i=1}^N\left(\frac{x-z_i}{|x-z_i|}\right)^{d_i}\e^{\imath\tr_{\p\o}(\gamma_{0,{\bf z},{\bf d}}+\varphi_{0,{\bf z},{\bf d}})}.
\end{equation}
We now define:
\[
h_\infty:=\tr_{\p\o}\left[\left(\dfrac{x}{|x|}\right)^d\e^{\imath(d\gamma_\infty+\varphi_\infty)}\right]\in H^{1/2}(\p\o,\S^1).
\]
It is clear that $\deg(h_\infty)=d$.

We prove in the three next subsections [Sections \ref{SubSectI}$\&$\ref{SubSectII}$\&$\ref{SubSectIII}] that $h_\infty$ satisfies \eqref{EstiTOTALlmkjhsj}.
\subsection{Calculations in $\R^2\setminus\overline{\o}$}\label{SubSectI}
From \eqref{EstBor,neSupePhaseExtlkhjb}, we get that $\n \varphi_R\1_{\O_R}$ is bounded in $L^2(\R^2\setminus\overline{\o})$ and thus, up to pass to a subsequence,  $\n \varphi_{R}\1_{\O_{R}}$ weakly converge in $L^2(\R^2\setminus\overline{\o})$. Consequently, we may improve the convergence in \eqref{AddedEq1}, up to pass to a subsequence, we obtain that $\n \varphi_{R}\1_{\O_{R}}\weak\n\varphi_\infty$ in $L^2(\R^2\setminus\overline{\o})$. In particular we obtain $\n\varphi_\infty\in L^2(\R^2\setminus\overline{\o})$.

Consequently, denoting $\phi_\infty:=\tr_{\p\o}(\varphi_\infty)$ we obtain  $\varphi_\infty\in\H_{\phi_\infty}$ [see \eqref{DefEspaceLimiteExt} for the definition of $\H_{\phi_\infty}$]. Therefore, letting  $\O_\infty=\R^2\setminus\overline{\o}$, we have:
\begin{eqnarray}\nonumber
\liminf_{R_n\to\infty}\left\{\frac{1}{2}\int_{\O_{R_n}}\alpha|\n u_{R_n}|^2-\frac{d^2}{2}\int_{\O_{R_n}}\alpha|\n v_{R_n}|^2\right\}&=&\liminf_{R_n\to\infty}\frac{1}{2}\int_{\O_{R_n}}\alpha|\n \varphi_{R_n}|^2
\\\nonumber&\geq&\frac{1}{2}\int_{\O_\infty}\alpha|\n \varphi_\infty|^2
\\\label{ConclusionBorneInfExt}&\geq&\inf_{\substack{\varphi\in \H_{\phi_\infty}}}\frac{1}{2}\int_{\O_\infty}\alpha|\n \varphi|^2.
\end{eqnarray}
\subsection{Calculations on $\o$}\label{SubSectII}
We continue the calculations by proving:

\begin{equation}\label{EstiIntlmkjhsj2}
\frac{1}{2}\int_{\o_{\rho,{\bf z}}}|\n \varphi_{\rho,{\bf z},{\bf d}}|^2\geq\frac{1}{2}\int_{\o}|\n \tilde\phi_{0,{\bf z},{\bf d}}|^2+o_{\rho}(1)
\end{equation}
where $\tilde\phi_{0,{\bf z},{\bf d}}$ is the harmonique extension of $\phi_{0,{\bf z},{\bf d}}:=\tr_{\p\o}\varphi_{0,{\bf z},{\bf d}}$ in $\o$, $\varphi_{0,{\bf z},{\bf d}}$ is defined in \eqref{AddedEq2}.

In order to get \eqref{EstiIntlmkjhsj2}, we adapt the argument done to prove Proposition \ref{PropConvEnInt}. 

From \eqref{EstBor,neSupePhaseExtlkhjb}, we have
\begin{equation}\nonumber
\sum_{i=1}^N\dfrac{1}{2}\int_{B(z_i,\sqrt\rho)\setminus \overline{B(z_i,\rho)}}{|\n\varphi_{\rho,{\bf z},{\bf d}}|^2}\leq C_0.
\end{equation}
Thus, from a mean value argument, there exists $\rho'\in]\rho,\sqrt\rho[$ s.t. 
\[
\sum_{i=1}^N\dfrac{1}{2}\int_{0}^{2\pi}| \p_\theta \varphi_{\rho,{\bf z},{\bf d}}(z_i+\rho'{\rm e}^{\imath\theta})|^2{\rm d}\theta\leq \frac{2C_0}{{|\ln{\rho}|}}.
\]
We now define  $\tilde{\varphi_\rho}\in H^1(\o)$  by $\tilde{\varphi_\rho}=\varphi_{\rho,{\bf z},{\bf d}}$ in $\o\setminus\cup_i\overline{B(z_i,\rho')}$ and for $i\in\{1,...,N\}\,\&\,x=z_i+s{\rm e}^{\imath\theta}\in B(z_i,\rho')$ we let  
\[
\tilde{\varphi_\rho}(z_i+s{\rm e}^{\imath\theta})=\left|\begin{array}{cl}2\dfrac{s-\rho'/2}{\rho'}\varphi_{{\rho,{\bf z},{\bf d}}}(z_i+\rho'{\rm e}^{\imath\theta})+\dfrac{\rho'-s}{\pi\rho'}\displaystyle\int_0^{2\pi}{\varphi_{\rho,{\bf z},{\bf d}}}(z_i+\rho'{\rm e}^{\imath\theta}){\rm d}\theta&\text{ if }s\in]\dfrac{\rho'}{2},\rho'[\\\dfrac{1}{2\pi}\displaystyle\int_0^{2\pi}{\varphi_{\rho,{\bf z},{\bf d}}}(z_i+\rho'{\rm e}^{\imath\theta}){\rm d}\theta&\text{ if }s\leq\dfrac{\rho'}{2}
\end{array}\right..
\]
A direct calculation gives: 
\begin{equation}\label{DurDurLesMathsljk3}
\sum_{i=1}^N\int_{B(z_i,\rho')}|\n \tilde\varphi_\rho|^2=\mathcal{O}\left[ \sum_{i=1}^N\int_{0}^{2\pi}| \p_\theta \varphi_{\rho,{\bf z},{\bf d}}(z_i+\rho'{\rm e}^{\imath\theta})|^2\right]=o_\rho(1).
\end{equation}
Thus, letting $\o_{\rho',{\bf z}}=\o\setminus\cup_{i=1}^N\overline{B(z_i,\rho')}$ and $\tilde{\dom}_{\rho'}=\cup_{i=1}^N B(z_i,\rho')\setminus\overline{B(z_i,\rho)}$, we obtain:
\begin{eqnarray}\nonumber
\int_{\o_{\rho,{\bf z}}}|\n {\varphi_{\rho,{\bf z},{\bf d}}}|^2&=& \int_{\o_{\rho',{\bf z}}}|\n \tilde{\varphi_\rho}|^2+\int_{\tilde{\dom}_{\rho'}}|\n {\varphi_{\rho,{\bf z},{\bf d}}}|^2
\\\nonumber&\geq&\int_{\o_{\rho',{\bf z}}}|\n \tilde{\varphi_\rho}|^2
\\\label{DurDurLesMathsljk1}&\stackrel{\eqref{DurDurLesMathsljk3}}{=}&\int_{\o}|\n \tilde{\varphi_\rho}|^2+o_\rho(1).
\end{eqnarray}
Since $\tilde{\varphi_\rho}$ is bounded in $H^1(\o)$, up to pass to a subsequence, we may assume the existence of $\tilde{\varphi_0}\in H^1(\o)$ s.t. $\tilde{\varphi_\rho}\weak\tilde{\varphi_0}$ in $H^1(\o)$. 

On the other hand, it is clear that  $\tr_{\p\o}\tilde{\varphi_0}=\tr_{\p\o}{\varphi}_{0,{\bf z},{\bf d}}=\phi_{0,{\bf z},{\bf d}}$. Consequently from the Dirichlet principle we get
\begin{equation}\label{DurDurLesMathsljk2}
\liminf_{\rho=\rho(R_n)\to0}\int_{\o}|\n \tilde{\varphi_\rho}|^2\geq\int_{\o}|\n \tilde{\varphi_0}|^2 \geq\int_{\o}|\n\tilde\phi_{0,{\bf z},{\bf d}}|^2.
\end{equation}

By combining \eqref{DurDurLesMathsljk1} and  \eqref{DurDurLesMathsljk2}  we obtain \eqref{EstiIntlmkjhsj2}.

From \eqref{EstiIntlmkjhsj1} and \eqref{EstiIntlmkjhsj2} we may write [denoting $\rho_n=\rho(R_n)$]

\begin{eqnarray}\nonumber
\liminf_{\rho_n\to0}\left\{\frac{1}{2}\int_{\o_{\rho_n,{\bf z}}}|\n u_{R_n}|^2-\frac{1}{2}\int_{\o_{\rho_n,{\bf z}}}|\n w_{\rho_n,{\bf z},{\bf d}}|^2\right\}&=&\liminf_{\rho_n\to0}\frac{1}{2}\int_{\o_{\rho_n,{\bf z}}}|\n \varphi_{\rho_n,{\bf z},{\bf d}}|^2
\\\label{ConclusionBorneInfInt}&\geq&\frac{1}{2}\int_{\o}|\n \tilde\phi_{0,{\bf z},{\bf d}}|^2.
\end{eqnarray}

\subsection{Conclusion}\label{SubSectIII}
Using \eqref{ConclusionBorneInfExt}, \eqref{ConclusionBorneInfInt}, the definition of the sequence $(R_n)_n$ and letting $\displaystyle{f}(R)=\frac{1}{2}\int_{\O_{R}}\alpha|\n v_R|^2$ we get 
\begin{eqnarray}\nonumber
&&\liminf_{R\to\infty}\left\{ I(R,\rho,{\bf z},{\bf d})-\left(d^2{f}(R)+\frac{b^2}{2}\int_{\o_{\rho,{\bf z}}}|\n w_{\rho,{\bf z},{\bf d}}|^2\right)\right\}
\\\nonumber&=&\lim_{R_n\to\infty}\left\{ \dfrac{1}{2}\int_{\dom_{R_n,{\bf z}}}\alpha|\n u_{R_n}|^2-\left(d^2{f}(R_n)+\frac{b^2}{2}\int_{\o_{\rho_n,{\bf z}}}|\n w_{\rho_n,{\bf z},{\bf d}}|^2\right)\right\}
\\\nonumber&\geq& \liminf_{R_n\to\infty}\left\{\frac{1}{2}\int_{\O_{R_n}}\alpha|\n u_{R_n}|^2-d^2f(R_n)\right\}+b^2\liminf_{\rho_n\to0}\left\{\frac{1}{2}\int_{\o_{\rho_n,{\bf z}}}|\n u_{R_n}|^2-\frac{1}{2}\int_{\o_{\rho_n,{\bf z}}}|\n w_{\rho_n,{\bf z},{\bf d}}|^2\right\}
\\\label{AIEOUILLE1}&\geq& \inf_{\substack{\varphi\in \H_{\phi_\infty}}}\frac{1}{2}\int_{\O_\infty}\alpha|\n \varphi|^2+\dfrac{b^2}{2}\int_{\o}|\n\tilde{\phi}_{0,{\bf z},{\bf d}}|^2.
\end{eqnarray}
Recall that  
\[
h_\infty:=\left(\dfrac{x}{|x|}\right)^d\e^{\imath(d\gamma_\infty+\phi_\infty)}\in H^{1/2}(\p\o,\S^1).
\]
 Therefore from \eqref{DefKFiunction} and \eqref{Compatiblitelimklj} we may write
\[
\K(h_\infty)=\inf_{\substack{\varphi\in \H_{\phi_\infty}}}\frac{1}{2}\int_{\O_\infty}\alpha|\n \varphi|^2+\dfrac{b^2}{2}\int_{\o}|\n\tilde{\phi}_{0,{\bf z},{\bf d}}|^2.
\]
Consequently \eqref{AIEOUILLE1} becomes 
\begin{equation}\label{AIEOUILLE2}
\liminf_{R\to\infty}\left\{ I(R,\rho,{\bf z},{\bf d})-\left(d^2{f}(R)+\frac{b^2}{2}\int_{\o_{\rho,{\bf z}}}|\n w_{\rho,{\bf z},{\bf d}}|^2\right)\right\}\geq\K(h_\infty).
\end{equation}
It suffices now to see that, from Theorem \ref{THMDevAsyEnergInt} we have
\begin{equation}\nonumber
\frac{1}{2}\int_{\o_{\rho,{\bf z}}}|\n w_{\rho,{\bf z},{\bf d}}|^2=\sum_id_i^2\pi|\ln\rho|+W({\bf z},{\bf d})+o_\rho(1),
\end{equation} this combined with \eqref{CCLBorneSup} gives
\[
\lim_{R\to\infty}\left\{ I(R,\rho,{\bf z},{\bf d})-\left[{f}(R)+b^2\left(\pi\left(\sum_{i=1}^Nd_i^2\right)|\ln\rho|+W({\bf z},{\bf d})\right)\right]\right\} \text{ exists}
\] and 
\[
\lim_{R\to\infty}\left\{ I(R,\rho,{\bf z},{\bf d})-\left[{f}(R)+b^2\left(\pi\left(\sum_{i=1}^Nd_i^2\right)|\ln\rho|+W({\bf z},{\bf d})\right)\right]\right\}=\K(h_\infty).
\]
\begin{remark}\label{Remark.IndConst}
It is direct to get that  $h_\infty$ is a minimizer of $\K: \{h\in H^{1/2}(\p\o,\S^1)\,|\,\deg(h)=d\}\to\R$.
\end{remark}
We now define:
\begin{equation}\label{DefMicrEnREn}
W^{\rm micro}({\bf z},{\bf d}):= b^2W({\bf z},{\bf d})+\min_{\substack{h\in H^{1/2}(\p\o,\S^1)\\\deg(h)=d}}\K(h)
\end{equation}
in order to write
\[
I(R,\rho,{\bf z},{\bf d})=d^2f(R)+b^2\pi\left(\sum_{i=1}^Nd_i^2\right)|\ln\rho|+W^{\rm micro}({\bf z},{\bf d})+o_\rho(1).
\]
The last equality ends the proof of Theorem \ref{ThmMicroEstm}.

\section{The case of the radially symmetric impurity: $\o=\D$}\label{RadialSection}
In this section we focus on the circular case with   $\o=\D=B(0,1)$ is the unit disc and for $b\in(0,\infty)$ we let 
\[
\begin{array}{cccc}\alpha:&\R^2&\to&\{b^2;1\}\\&x&\mapsto&\begin{cases}b^2&\text{if }x\in\D\\1&\text{if }x\in\R^2\setminus\D\end{cases}.
\end{array}
\] 

We fix
\begin{itemize}
\item[$\bullet$] $N\in\N^*$, ${\bf d}=(d_1,...,d_N)\in\Z^N$ and we let $ \di d:=\sum_{i=1}^N d_i\in\Z$;
\item[$\bullet$]${\bf z}\in\dst:=\{(z_1,...,z_N)\in\D^N\,|\,z_i\neq z_j\text{ for }i\neq j\}$.
\end{itemize}
\subsection{Explicit expression of the special solutions}
We use the same notations as in Section \ref{SectionSpeSol}. 
\begin{notation}
In this section and in the next sections, in order to keep notations simple, we use the shorthand "$x$" to stand the identity map. Namely we use the abuse of notation ${\rm Id}=x$ where ${\rm Id}:U\to U,\,x\mapsto{\rm Id}(x)=x$  and $U\subset\R^2\simeq\C$ is an arbitrary set .
\end{notation}
We let $v_\infty$ be the limiting function obtained in Corollary \ref{Cor-CVGammaR}. It is easy to prove that  $v_\infty(x)=\dfrac{x}{|x|}$, {\it i.e.} $\gamma_\infty\equiv0$.

We let $\di w_{0,{\bf z},{\bf d}}=\prod_{i=1}^N\left(\dfrac{x-z_i}{|x-z_i|}\right)^{d_i}{\rm e }^{\imath \gamma_{0,{\bf z},{\bf d}}}$ be the function defined in \eqref{DefSolPartLIMITEInclusmkj}. This function is the canonical harmonique map in  $\D$ associated to the singularities $({\bf z},{\bf d})$. 

On the unit circle $\S^1$ we have $\tr_{\S^1}(w_{0,{\bf z},{\bf d}})=\e^{\imath\psi_{0,{\bf z},{\bf d}}}$ with 
\[
\p_\tau \psi_{0,{\bf z},{\bf d}}=\p_\nu\left[\sum_{j=1}^Nd_j\left(\ln|x-z_j|-\ln|1-\overline{z_j}x|\right)\right].
\]
This result comes from \cite{LamyMironescu2014} Eq. (2.25) et (4.1).

From (4.14) in  \cite{LamyMironescu2014} we have
\[
\p_\tau \psi_{0,{\bf z},{\bf d}}=\sum_{j=1}^Nd_j\left[2\p_\nu\left(\ln|x-z_j|\right)-1\right].
\]
Thus
\[
\p_\tau \psi_{0,{\bf z},{\bf d}}=\sum_{j=1}^Nd_j\left[2\p_\tau\left({\rm arg}(x-z_j)\right)-1\right]
\]
with $\dfrac{x-z_j}{|x-z_j|}=\e^{\imath {\rm arg}(x-z_j)}$.

Consequently we get
\begin{equation}\label{ExpFormSpeSolInt}
\tr_{\S^1}(w_{0,{\bf z},{\bf d}})=\e^{\imath\psi_{0,{\bf z},{\bf d}}}={\rm Cst}\times x^{-d}\prod_{j=1}^N\left(\dfrac{x-z_j}{|x-z_j|}\right)^{2d_j}
\end{equation}
where ${\rm Cst}\in\S^1$ is a constant.
\subsection{Expression of the dephasing}
For $h_\infty\in H^{1/2}(\S^1,\S^1)$ we have [see \eqref{DefEspaceLimiteExt} and \eqref{DefKFiunction}]
\[
\K(h_\infty)=\inf_{\substack{\varphi\in \H_{\phi_\infty}}}\frac{1}{2}\int_{\O_\infty}|\n \varphi|^2+\dfrac{b^2}{2}\int_{\o}|\n\tilde{\phi}_{0}|^2,
\]
where:
\begin{itemize}
\item[$\bullet$] on the unit circle we have
\begin{equation}\label{CondCompExplicit}
h_\infty=x^d\e^{\imath\phi_\infty}=w_{0,{\bf z},{\bf d}}\e^{\imath\phi_{0}},
\end{equation}
\item[$\bullet$] $\tilde{\phi}_{0}$ is the harmonic extension of  ${\phi}_{0}$ in $\D$.
\end{itemize}

Condition \eqref{CondCompExplicit} is a compatibility condition between the fonctions $\phi_\infty$ et $\phi_0$. It is clear that from the definition of  $\K$ we may slightly modify Condition \eqref{CondCompExplicit} by imposing 
\begin{equation}\label{CondCompExplicitbis}
\left(\dfrac{x}{|x|}\right)^d\e^{\imath\phi_\infty}={\rm Cst}\times w_{0,{\bf z},{\bf d}}\e^{\imath\phi_{0}}\text{ with }{\rm Cst}\in\S^1.
\end{equation}
We may easily prove that
\[
\inf_{\substack{\varphi\in \H_{\phi_\infty}}}\frac{1}{2}\int_{\O_\infty}|\n \varphi|^2=\frac{1}{2}\int_{\O_\infty}|\n \hat{\phi}_\infty|^2
\] where for  $\phi\in H^{1/2}(\S^1,\R)$, $\hat{\phi}\in H^1_{\rm loc}(\R^2\setminus\overline{\D})$ is the unique solution of
\begin{equation}\nonumber
\begin{cases}
-\Delta\varphi=0&\text{in }\R^2\setminus\overline{\D}\\\tr_{\S^1}(\varphi)=\phi,&\n\varphi\in L^2(\R^2\setminus\overline{\D})
\end{cases}.
\end{equation}
[See Proposition \ref{BasicPropFourier} for more details about  $\hat{\phi}$]

From \eqref{ExpFormSpeSolInt}, an equivalent reformulation of  \eqref{CondCompExplicitbis}  is
\[
{\rm Cst}\prod_{j=1}^N\left(\dfrac{x-z_j}{|x-z_j|\times x}\right)^{2d_j}=\e^{\imath(\phi_\infty-\phi_0)}\text{ with }{\rm Cst}\in\S^1.
\]
The above condition is equivalent to the compatibility condition:
\begin{equation}\label{COmpatibilielkjuklhjb}
\phi_\infty-\phi_0=\Deph_{{\bf z},{\bf d}}+{\rm Cst}\text{ where }{\rm Cst}\in\R
\end{equation}
with  $\Deph_{{\bf z},{\bf d}}\in C^\infty(\S^1,\R)$ which is a lifting of 
\[
\prod_{j=1}^N\left(\dfrac{x-z_j}{|x-z_j|\times x}\right)^{2d_j}.
\]
Here we used Proposition \ref{PropRappelDeg}.\ref{PropRappelDeg1} and the smoothness of $\e^{\imath \Deph_{{\bf z},{\bf d}}}$.

With a direct calculation, for $z_0\in\D$ and $x\in\S^1$, we have
\[
\left(\dfrac{x-z_0}{|x-z_0|x}\right)^{2}=\dfrac{x-z_0}{\overline{x-z_0}\times x^2}=\dfrac{x-z_0}{1-\overline{z_0} x}\times\dfrac{1}{x}=M_{z_0}(x)\times\dfrac{1}{x}
\]
where $M_{z_0}:\D\to\D$ is the Moebius function defined by  $M_{z_0}(x)=\dfrac{x-z_0}{1-\overline{z_0} x}$.

In \cite{dos2015microscopic}, it is proved [Section 7] that if $z_0\in\D\cap\R^+$ then for $\e^{\imath\theta}\in\S^1$
\[
M_{z_0}(\e^{\imath\theta})\e^{-\imath\theta}=\e^{\Deph_{z_0,1}(\e^{\imath\theta})}\text{ where } \Deph_{z_0,1}(\e^{\imath\theta})=\sum_{n\in\Z^*}\frac{z_0^{|n|}}{\imath n}\e^{\imath n\theta}+{\rm Cst},\,{\rm Cst}\in\R.
\]

In the general case $z_0=t\e^{\imath \gamma}\in\D$ [with $t\geq0, \gamma\in\R$] we easily deduce from the previous equality:
\begin{eqnarray*}
M_{z_0}(\e^{\imath\theta})\e^{-\imath\theta}&=&\dfrac{\e^{\imath\theta}-t\e^{\imath \gamma}}{(1-t\e^{-\imath \gamma} \e^{\imath\theta})\e^{\imath\theta}}
\\&=&\dfrac{\e^{\imath(\theta-\gamma)}-t}{(1-t \e^{\imath(\theta-\gamma)})\e^{\imath(\theta-\gamma)}}
\\&=&M_{t}[\e^{\imath(\theta-\gamma)}]\e^{-\imath(\theta-\gamma)}.
\end{eqnarray*}
Then  
\begin{eqnarray}\nonumber
 \Deph_{z_0,1}(\e^{\imath\theta})&=& \Deph_{t,1}(\e^{\imath(\theta-\gamma)})+{\rm Cst}
 \\\nonumber&=&\sum_{n\in\Z^*}\frac{t^{|n|}}{\imath n}\e^{\imath n(\theta-\gamma)}+{\rm Cst}
 \\\nonumber&=&\sum_{n\in\N^*}\left[\frac{\overline{z_0}^{n}}{\imath n}\e^{\imath n\theta}-\frac{{z_0}^{n}}{\imath n}\e^{-\imath n\theta}\right]+{\rm Cst},\,{\rm Cst}\in\R.
\end{eqnarray}

It is easy to prove that we have  $\Deph_{{\bf z},{\bf d}}=\sum_{j=1}^Nd_j \Deph_{z_0,1}+{\rm Cst}$ [${\rm Cst}\in\R$] and then 
\begin{equation}\label{DecopFourierDeph}
\Deph_{{\bf z},{\bf d}}(\e^{\imath\theta})={\rm Cst}+\sum_{n\in\N^*}\sum_{j=1}^Nd_j\left[\frac{\overline{z_j}^{n}}{\imath n}\e^{\imath n\theta}-\frac{{z_j}^{n}}{\imath n}\e^{-\imath n\theta}\right].
\end{equation}
We are now in position to reformulate the compatibility condition \eqref{COmpatibilielkjuklhjb}. 

Let $\phi_0,\phi_\infty\in H^{1/2}(\S^1,\S^1)$, consider their Fourier decompositions:
\begin{equation}\label{FourierDec}
\phi_0(\e^{\imath\theta})=\sum_{n\in\Z}c_{0,n}\e^{\imath n\theta}\text{ and }\phi_\infty(\e^{\imath\theta})=\sum_{n\in\Z}c_{\infty,n}\e^{\imath n\theta}.
\end{equation}
We have
\begin{align}
&&\left(\dfrac{x}{|x|}\right)^d\e^{\imath\phi_\infty}={\rm Cst} \times w_{0,{\bf z},{\bf d}}\e^{\imath\phi_{0}},\,{\rm Cst}\in\S^1\tag{\ref{CondCompExplicitbis}}\\&\Leftrightarrow&\phi_\infty-\phi_0=\Deph_{{\bf z},{\bf d}}+{\rm Cst},\,{\rm Cst}\in\R\tag{\ref{COmpatibilielkjuklhjb}}
\\&\Leftrightarrow&\forall\,n\in\Z^*,\, c_{\infty,n}-c_{0,n}=\begin{cases}\di\sum_{j=1}^N d_j\frac{\overline{z_j}^{n}}{\imath n}&\text{if $n>0$}\\\di-\sum_{j=1}^N d_j\frac{{z_j}^{n}}{\imath n}&\text{if $n<0$}\end{cases}.
\end{align}
\subsection{Explicit expression of the minimal value of $\K$ }
For $\phi_\infty,\phi_0\in H^{1/2}(\S^1,\R)$ we use Notation \eqref{FourierDec} for their Fourier coefficients:
\begin{itemize}
\item the Fourier coefficients of  $\phi_\infty$ are denoted by $(c_{\infty,n})_{n\in\Z}$,
\item the Fourier coefficients of  $\phi_0$ are denoted by $(c_{0,n})_{n\in\Z}$.
\end{itemize}

Before going further we recall some basic facts. 
\begin{prop}\label{BasicPropFourier}
Let $\phi\in H^{1/2}(\S^1,\R)$ and consider $\phi(\e^{\imath\theta})=\sum_{n\in\Z} c_n\e^{\imath n\theta}$ be its Fourier decomposition.

Then we have
\begin{enumerate}
\item\label{FourierProp1} $c_n=\overline{c_{-n}}$
\item\label{FourierProp2}  $\di\sum_{n\in\Z}|n||c_n|^2<\infty$ and we may choose the quantity $\di\sqrt{\pi\sum_{n\in\Z}|n||c_n|^2}$ as a semi-norm in $H^{1/2}(\S^1,\R)$.
\item\label{FourierProp3}  The map
\[
\begin{array}{cccc}
\tilde{\phi}:&\D&\to&\R\\&r\e^{\imath\theta}&\mapsto&\di\sum_{n\in\Z} c_n r^{|n|}\e^{\imath n\theta}
\end{array}
\]
is the harmonic extension of $\phi$. Moreover
\[
\dfrac{1}{2}\int_\D|\n\tilde \phi|^2=\pi{\sum_{n\in\Z}|n||c_n|^2}.
\]
\item\label{FourierProp4}  The map
\[
\begin{array}{cccc}
\hat{\phi}:&\R^2\setminus\overline{\D}&\to&\R\\&r\e^{\imath\theta}&\mapsto&\di\sum_{n\in\Z} c_n r^{-|n|}\e^{\imath n\theta}
\end{array}
\]
is an exterior harmonic extension of $\phi$. Moreover
\[
\dfrac{1}{2}\int_{\R^2\setminus\overline{\D}}|\n\hat \phi|^2=\pi{\sum_{n\in\Z}|n||c_n|^2}.
\]
\item\label{FourierProp5}  $\hat{\phi}$ is the unique solution of 
\begin{equation}\label{MinExteriorHarmExt}
\begin{cases}
-\Delta\varphi=0\text{ in }\R^2\setminus\overline{\D},
\\\varphi\in H^1_{\rm loc}(\R^2\setminus\D,\R)
\\\tr_{\S^1}(\varphi)=\phi,\,\n\varphi\in L^2(\R^2\setminus\overline{\D},\R^2)
\end{cases}.
\end{equation}
Therefore it is also the unique solution of the problem
\begin{equation}\nonumber\label{MinExteriorHarmExtPB}
\inf_{\substack{\varphi\in H^1_{\rm loc}(\R^2\setminus\D,\R)\\\tr_{\S^1}(\varphi)=\phi,\,\n\varphi\in L^2(\R^2\setminus\overline{\D},\R^2)}}\dfrac{1}{2}\int_{\R^2\setminus\overline{\D}}|\n \varphi|^2.
\end{equation}
\end{enumerate}
\end{prop}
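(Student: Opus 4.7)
The plan is to treat each item by a direct Fourier-series computation in polar coordinates, combined with standard elliptic theory for the exterior Dirichlet problem. For item \ref{FourierProp1}, since $\phi$ is real-valued, the identity $\phi=\overline\phi$ applied to the Fourier series immediately gives $c_n=\overline{c_{-n}}$. For items \ref{FourierProp3} and \ref{FourierProp4} I would first observe that every elementary mode $r^{|n|}\e^{\imath n \theta}$ (resp.\ $r^{-|n|}\e^{\imath n \theta}$) is harmonic, being the real or imaginary part of $z^n$ or $\overline z^{|n|}$; this makes $\tilde{\phi}$ and $\hat{\phi}$ harmonic in their respective domains with trace $\phi$ on $\S^1$. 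The corresponding Dirichlet energies are then computed in polar coordinates using $|\n u|^2 = |\p_r u|^2 + r^{-2}|\p_\theta u|^2$, Parseval's identity in $\theta$, and the fact that cross terms between distinct Fourier modes integrate to zero, so each mode contributes independently. A short calculation produces, for each $n\neq 0$, a contribution $\pi|n||c_n|^2$ both to the interior energy $\tfrac12\int_\D|\n\tilde\phi|^2$ (integrating $r^{2|n|-1}$ on $(0,1)$) and to the exterior energy $\tfrac12\int_{\R^2\setminus\overline\D}|\n\hat\phi|^2$ (integrating $r^{-2|n|-1}$ on $(1,\infty)$). Item \ref{FourierProp2} is then a by-product: the $H^{1/2}$-seminorm of $\phi$ is equivalent to the square root of the Dirichlet energy of its interior harmonic extension, which forces $\sum|n||c_n|^2<\infty$ and shows that $\sqrt{\pi\sum|n||c_n|^2}$ is indeed a valid semi-norm.

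For item \ref{FourierProp5} I would split the argument into existence and uniqueness. Existence is already furnished by item \ref{FourierProp4}: $\hat\phi$ is harmonic, has trace $\phi$ on $\S^1$, and its gradient lies in $L^2(\R^2\setminus\overline\D)$. For uniqueness, if $\varphi_1,\varphi_2$ are two solutions of \eqref{MinExteriorHarmExt} then $\psi=\varphi_1-\varphi_2$ is harmonic outside $\overline\D$, with vanishing trace on $\S^1$ and with $\n\psi\in L^2$. Expanding $\psi(r,\theta)=\sum_n a_n(r)\e^{\imath n\theta}$ at fixed $r>1$ and using harmonicity, each $a_n$ satisfies an Euler-type ODE whose fundamental solutions are $r^{|n|}$ and $r^{-|n|}$ (respectively $1$ and $\ln r$ when $n=0$). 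The $L^2$-integrability of $\n\psi$ at infinity rules out the growing modes $r^{|n|}$ and $\ln r$, while the vanishing trace on $\S^1$ kills the remaining $r^{-|n|}$ and constant modes; hence $\psi\equiv 0$. The variational characterization then follows from the strict convexity of the Dirichlet functional on the affine constraint set, whose unique critical point satisfies the Euler-Lagrange equation \eqref{MinExteriorHarmExt}.

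The main technical difficulty is precisely this uniqueness/minimization step in the exterior domain, since the standard Poincar\'e inequality fails and one cannot invoke the Dirichlet principle verbatim. The Fourier/ODE route above circumvents the issue by ruling out all growing modes through the $L^2$ control on the gradient, which avoids any delicate cut-off or Hardy-type estimate at infinity; alternatively one could apply the Kelvin transform $x\mapsto x/|x|^2$ to reduce the exterior Dirichlet problem to an interior one and invoke the classical results, which would provide another route to the same conclusion.
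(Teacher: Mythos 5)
Your proof is correct. Items \ref{FourierProp1}--\ref{FourierProp4} are handled exactly as the paper intends (it simply declares them ``standard''), and your polar-coordinate/Parseval computation of the two Dirichlet energies is the right calculation. The genuine difference lies in item \ref{FourierProp5}, which is also the only step the paper proves in detail. For the uniqueness of the exterior harmonic extension with $\n\varphi\in L^2$, the paper takes the difference $\eta$ of two solutions and invokes an external result (Theorem II.6.2.ii of Simader--Sohr on the Dirichlet problem in exterior domains) to conclude $\eta=0$; you instead give a self-contained mode-by-mode argument, expanding $\eta$ in angular Fourier modes, solving the Euler ODE for each radial coefficient, and killing the growing branches $r^{|n|}$ and $\ln r$ by the $L^2$ condition on the gradient and the decaying branches by the vanishing trace. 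Your route is more elementary and keeps the proposition entirely within the Fourier framework already set up in items \ref{FourierProp3}--\ref{FourierProp4}, at the cost of a slightly longer argument (one must justify that the angular Fourier coefficients of a harmonic function solve the Euler ODE and that the Parseval decomposition of $\int|\n\eta|^2$ lets you discard each growing mode separately --- both routine). The citation route is shorter but imports a nontrivial exterior-domain result. For the final variational characterization, both arguments are essentially identical: a minimizer exists by the direct method, solves the Euler--Lagrange system, and hence coincides with $\hat\phi$ by the uniqueness just established; your appeal to strict convexity should be read this way, since convexity alone does not produce existence on the unbounded domain --- you still need either the direct method or the explicit orthogonality $\int_{\R^2\setminus\overline\D}\n\hat\phi\cdot\n\eta=0$ for admissible perturbations $\eta$ with zero trace. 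Your closing remark about the Kelvin transform is a third valid route the paper does not take.
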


\begin{proof}
Assertions \ref{FourierProp1} and \ref{FourierProp2} are quite standard. Assertions \ref{FourierProp3} and \ref{FourierProp4} follow from standard calculations.

We now prove Assertion \ref{FourierProp5}. Let $\phi\in H^{1/2}(\S^1,\R)$ and let $\hat\phi$ be defined by Assertion \ref{FourierProp4}. It is clear that $\hat\phi$ solves \eqref{MinExteriorHarmExt}.  Assume that $\varphi_0$ is a solution of \eqref{MinExteriorHarmExt} and let $\eta:=\hat\phi-\varphi_0$. Then $\eta$ satisfies:
\begin{equation}\nonumber
\begin{cases}
-\Delta\eta=0\text{ in }\R^2\setminus\overline{\D},
\\\eta\in H^1_{\rm loc}(\R^2\setminus\D,\R)
\\\tr_{\S^1}(\eta)=0,\,\n\eta\in L^2(\R^2\setminus\overline{\D},\R^2)
\end{cases}.
\end{equation}
From \cite{simader1996dirichlet} [Theorem II.6.2.ii] we get $\eta=0$. This clearly gives the uniqueness of the solution of \eqref{MinExteriorHarmExt}.

On the one hand, by direct minimization we know that Problem \ref{MinExteriorHarmExtPB} admits solution(s). It is standard to check that a minimizer for \eqref{MinExteriorHarmExtPB} solves \eqref{MinExteriorHarmExt}. Consequently $\hat\phi$ is the unique solution of \eqref{MinExteriorHarmExtPB}. 
\end{proof}
\begin{notation}
From now on, for $\phi\in H^{1/2}(\S^1,\R)$ with Fourier decomposition $\phi(\e^{\imath\theta})=\sum_{n\in\Z} c_n\e^{\imath n\theta}$, we let  
\[
|\phi|_{H^{1/2}}:=\sqrt{\pi{\sum_{n\in\Z}|n||c_n|^2}}=\sqrt{2\pi{\sum_{n\in\N^*}|n||c_n|^2}}=\sqrt{{\dfrac{1}{2}\int_{\R^2\setminus\overline\D}|\n\hat\phi|^2}}=\sqrt{\dfrac{1}{2}\int_{\D}|\n\tilde\phi|^2}.
\]
\end{notation}
For $n\in\N^*$, letting $\gamma_n=\sum_{j=1}^N d_j\frac{\overline{z_j}^{n}}{\imath n}$, {\it i.e.} $\Deph_{{\bf z},{\bf d}}(\e^{\imath\theta})={\rm Cst}+\sum_{n\in\Z^*}\gamma_n\e^{\imath n\theta}$ [see \eqref{DecopFourierDeph}], we get
\begin{eqnarray}\nonumber
\inf_{\substack{h\in H^{1/2}(\S^1,\S^1)\\\deg(h)=d}}\K(h)&=&\inf_{\substack{\phi_0,\phi_\infty\in H^{1/2}(\S^1,\R)\\x^d\e^{\imath\phi_\infty}={\rm Cst} \times w_{0,{\bf z},{\bf d}}\e^{\imath\phi_{0}}}}\left(\frac{1}{2}\int_{\O_\infty}|\n \hat\phi_\infty|^2+\dfrac{b^2}{2}\int_{\o}|\n\tilde{\phi}_{0}|^2\right)
\\\nonumber&=&2\pi\inf_{\substack{(c_{0,n})_{n\in\N^*},(c_{\infty,n})\in \ell^2(\N^*)\\c_{\infty,n}-c_{0,n}=\gamma_n\,\forall\,n\in\N^*}}\left(\sum_{n\in\N}n|c_{0,n}|^2+b^2\sum_{n\in\N}n|c_{\infty,n}|^2\right)
\\\nonumber&=&2\pi\sum_{n\in\N^*}\left[n\times\inf_{\substack{c_{0,n},c_{\infty,n}\in\C\\c_{\infty,n}-c_{0,n}=\gamma_n\,\forall\,n\in\N^*}}\left(|c_{0,n}|^2+b^2|c_{\infty,n}|^2\right)\right]
\\\nonumber&=&2\pi\sum_{n\in\N^*}\left[n\times\inf_{\substack{c_{0,n}\in\C}}\left(|c_{0,n}|^2+b^2|c_{0,n}+\gamma_n|^2\right)\right]
\\\nonumber&=&2\pi\sum_{n\in\N^*}\left[n\times\left(\left|\dfrac{-b^2}{1+b^2}\gamma_n\right|^2+b^2\left|\dfrac{-b^2}{1+b^2}\gamma_n+\gamma_n\right|^2\right)\right]
\\\label{ExpreMinKInf}&=&\frac{b^2}{1+b^2}2\pi\sum_{n\in\N^*}n|\gamma_n|^2=\frac{b^2}{1+b^2}|\Deph_{{\bf z},{\bf d}}|_{H^{1/2}}^2.
\end{eqnarray}
\subsection{Explicit expression of $W^{\rm micro}$: Proof of Proposition \ref{Prop-Exp-Circ}}\label{ProofOf-Prop-Exp-Circ}
We first recall the expression of $W({\bf z},{\bf d})$ [see Proposition 1 in \cite{LR1}]:
\[
W({\bf z},{\bf d})=-\pi\sum_{i\neq j}d_id_j\ln|z_i-z_j|+\pi\sum_{i=1}^Nd_i^2\ln(1-|z_i|^2)+\pi\sum_{i\neq j}d_id_j\ln|1-z_i\overline{z_j}|.
\]
From \eqref{DefMicrEnREn} we have
\[
W^{\rm micro}({\bf z},{\bf d})= b^2W({\bf z},{\bf d})+\min_{\substack{h\in H^{1/2}(\S^1,\S^1)\\\deg(h)=d}}\K(h).
\]
By combining \eqref{DecopFourierDeph} and \eqref{ExpreMinKInf} we may write
\[
\min_{\substack{h\in H^{1/2}(\S^1,\S^1)\\\deg(h)=d}}\K(h)=\frac{2b^2}{1+b^2}\pi\sum_{n\in\N^*}n\left|\sum_{j=1}^N d_j\frac{\overline{z_j}^{n}}{\imath n}\right|^2=\frac{2b^2}{1+b^2}\pi\sum_{n\in\N^*}\dfrac{1}{n}\left|\sum_{j=1}^N d_j{z_j^{n}}\right|^2.
\]
For $n\in\N^*$ we have the following expanding
\[
\left|\sum_{j=1}^N d_j{z_j^{n}}\right|^2=\sum_{j=1}^N d_j^2|z_j|^{2n}+\sum_{i\neq j}d_id_j(z_i\overline{z_j})^n=\sum_{j=1}^N d_j^2|z_j|^{2n}+2{\rm Re}\left[\sum_{i< j}d_id_j(z_i\overline{z_j})^n\right].
\]

Therefore we obtain
\begin{eqnarray*}
\sum_{n\in\N^*}\dfrac{1}{n}\left|\sum_{j=1}^N d_j{z_j^{n}}\right|^2&=&\sum_{j=1}^N d_j^2\left(\sum_{n\in\N^*}\dfrac{1}{n}|z_j|^{2n}\right)+
2\sum_{i< j}d_id_j{\rm Re}\left[\sum_{n\in\N^*}\dfrac{1}{n}(z_i\overline{z_j})^n\right]
\\&=&-\sum_{j=1}^N d_j^2\ln(1-|z_j|^2)-2\sum_{i< j}d_id_j{\rm Re}\left[\ln(1-z_i\overline{z_j})\right]
\\&=&-\sum_{j=1}^N d_j^2\ln(1-|z_j|^2)-\sum_{i\neq j}d_id_j\ln|1-z_i\overline{z_j}|.
\end{eqnarray*}
We may thus conclude:
\begin{eqnarray*}
W^{\rm micro}({\bf z},{\bf d})&=& b^2\pi\left[-\sum_{i\neq j}d_id_j\ln|z_i-z_j|+\sum_{i=1}^Nd_i^2\ln(1-|z_i|^2)+\sum_{i\neq j}d_id_j\ln|1-z_i\overline{z_j}|-\right.\\&&\phantom{aaaaaaaffff}\left.-\frac{2}{1+b^2}\left(\sum_{j=1}^N d_j^2\ln(1-|z_j|^2)+\sum_{i\neq j}d_id_j\ln|1-z_i\overline{z_j}|\right)\right]
\\&=&-b^2\pi\left[\sum_{i\neq j}d_id_j\ln|z_i-z_j|+\dfrac{1-b^2}{1+b^2}\sum_{j=1}^N d_j^2\ln(1-|z_j|^2)+\dfrac{1-b^2}{1+b^2}\sum_{i\neq j}d_id_j\ln|1-z_i\overline{z_j}|\right].
\end{eqnarray*}
These calculations end the proof of Proposition \ref{Prop-Exp-Circ}.
\subsection{Minimization of $W^{\rm micro}$ in some particular cases}\label{PartMinCasPartDisque}
We first claim that if ${\bf d}={\bf 0}_{\Z^N}$ then $W^{\rm micro}(\cdot,{\bf d})\equiv0$. In the following we consider ${\bf d}\in\Z^N\setminus\{{\bf 0}_{\Z^N}\}$.
\subsubsection{The case $N=1$ and the case $N\geq2\&\exists! k_0\in\{1,...,N\}$ s.t. $d_{k_0}\neq0$}
We first treat the case  $N=1$. In this situation, we have for $z\in\D$ and $d\in\Z$ :
\[
W^{\rm micro}({ z},{ d})=-\dfrac{b^2(1-b^2)}{1+b^2}\pi d^2\ln(1-|z|^2)
\]
Therefore, if $b<1$ then $z=0$ is the unique minimizer of $W^{\rm micro}$. 

\begin{remark} This simple fact is the main result of \cite{dos2015microscopic} [where the explicit expression of $W^{\rm micro}$ was unknown].
\end{remark}

If $b=1$ then  $W^{\rm micro}(\cdot,{ d})\equiv0$.

If $b>1$ then 
\[
W^{\rm micro}({ z},{ d})=\dfrac{b^2(b^2-1)}{1+b^2}\pi d^2\ln(1-|z|^2).
\]
Consequently $W^{\rm micro}({ z},{ d})\to-\infty$ when $|z|\to1$. This implies that $W^{\rm micro}({ \cdot},{ d})$ does not admit minimizers.

\begin{remark}
We may conclude that the condition $b<1$ creates a confinement effect for the points of minimum of $W^{\rm micro}({ \cdot},{ d})$. This confinement effect does not hold for $b\geq1$.
\end{remark}

We now consider the case $N\geq2$. We assume that  $d_1\neq0$ and $d_l=0$ for $l\neq1$. 

This situation is similar to the above one since for ${\bf z}=(z_1,...,z_N)\in\ost$ we have $W^{\rm micro}({\bf z},{\bf d})=W^{\rm micro}({ z_1},{ d_1})$. Consequently as previously we have:
\begin{itemize}
\item If $b<1$ then the set of global minimizers of $W^{\rm micro}$ is $\{{\bf z}\in\ost\,|\,z_1=0\}$. 
\item If $b=1$ then  $W^{\rm micro}(\cdot,{\bf d})\equiv0$.
\item If $b>1$ then $W^{\rm micro}({\bf z},{\bf d})\to-\infty$ when $|z_1|\to1$.
\end{itemize}

\subsubsection{The case $N\geq2$ and there exist $k, l$ s.t. $d_kd_l<0$}
Let  ${\bf d}\in\Z^N$ s.t. there exist $k\neq l$ satisfying $d_kd_l<0$. 

In this situation we have 
\begin{eqnarray*}
\inf_{{\bf z}\in\ost}W^{\rm micro}({\bf z},{\bf d})&=& -\infty.
\end{eqnarray*}
Indeed, without loss of generality, we may assume that $d_1d_2<0$. We thus consider $z_1^{(n)}=-1/n$,  $z_2^{(n)}=1/n$ and for $k\in\{1,...,N\}\setminus\{1,2\}$, $z_k=\e^{\imath2k\pi/N}/2$. 

With direct calculations, we obtain $\lim_n W({\bf z}_n,{\bf d})=-\infty$.
\begin{remark}
This fact underline that if we impose $d_1d_2<0$ then the main part of the {\it optimal} energy $I(R,\rho,{\bf z},{\bf d})$ is not 
\[
\left(\sum_{i=1}^Nd_i\right)^2f(R)+b^2\left(\sum_{i=1}^Nd_i^2\right)|\ln\rho|.
\]
Indeed when we consider {\it very near} singularities $z_1\&z_2$ we may optimize the divergent term $b^2\left(\sum_{i=1}^Nd_i^2\right)|\ln\rho|$. The key argument is that with degrees having different signs ({\it e.g} $d_1d_2<0$) we have
\[
\sum_{i=1}^Nd_i^2>(d_1+d_2)^2+\sum_{i=3}^Nd_i^2.
\]
This is an example of the standard attractive effect of singularities having degrees with different signs.
\end{remark}
\subsubsection{The case $b=1$, $N\geq2$, $d_k d_l\geq0$ $\forall k,l$ and there exist $k_0,l_0$ s.t.  $d_{k_0}d_{l_0}>0$}
When $b=1$, for $({\bf z},{\bf d})\in\ost\times\Z^N$ we have
\begin{eqnarray*}
W^{\rm micro}({\bf z},{\bf d})&=& -\pi\sum_{i\neq j}d_id_j\ln|z_i-z_j|.
\end{eqnarray*}
Thus
\[
\inf_{{\bf z}\in\ost}W^{\rm micro}({\bf z},{\bf d})>-\infty
\]
but the lower bound is not attained.

Indeed, it is easy to check for ${\bf z}\in\ost$
\[
0>\inf_{{\bf z}\in\ost}W^{\rm micro}({\bf z},{\bf d})> -\pi\sum_{i\neq j}d_id_j\ln2.
\]
Consequently $W^{\rm micro}(\cdot,{\bf d})$ is bounded from below.

We now prove that the lower bound is not reached.  Let ${\bf z}\in\ost$, and consider $\tilde{\bf z}\in\ost$ be s.t. $\tilde{z}_k=\lambda z_k$ with $\lambda:=\dfrac{2}{1+\max\{|z_l|,l\in\{1,...,N\}\}}$. It is easy to check that  $\tilde{\bf z}\in\ost$.

We get 
\[
W^{\rm micro}(\tilde{\bf z},{\bf d})= W^{\rm micro}({\bf z},{\bf d})-\pi\ln\lambda\sum_{i\neq j}d_id_j.
\]
Since $\lambda>1$, we have $W^{\rm micro}(\tilde{\bf z},{\bf d})< W^{\rm micro}({\bf z},{\bf d})$. This fact implies that the lower bound is not reached.
\begin{remark}
When $b=1$, the impurity $\o=\D$ does not play any role. Then, due to the standard repulsion effect between vortices, the more the vortices are distant the smaller the energy. Consequently, for fixed degrees having all the same sign, minimal sequences of singularities go to the boundary of the impurity which is not an admissible configuration in this framework.
\end{remark}
\subsubsection{The case $b>1$ and $N\geq2$}
If $b>1$ then for $({\bf z},{\bf d})\in\ost\times\Z^N$ we have
\begin{eqnarray*}
W^{\rm micro}({\bf z},{\bf d})&=& b^2\pi\left[-\sum_{i\neq j}d_id_j\ln|z_i-z_j|+\dfrac{b^2-1}{1+b^2}\sum_{j=1}^N d_j^2\ln(1-|z_j|^2)+\dfrac{b^2-1}{1+b^2}\sum_{i\neq j}d_id_j\ln|1-z_i\overline{z_j}|\right]
\end{eqnarray*}
Taking, for $k\in\{1,...,N\}$, $z^{(n)}_k=(1-{1}/{n})\e^{\imath{2\pi k}/{N}}$ we have 
\[
W^{\rm micro}({\bf z},{\bf d})=\mathcal{O}(1)+\dfrac{b^2-1}{1+b^2}\sum_{j=1}^N d_j^2\ln(1-|z_j|^2)\to-\infty\text{ when }n\to\infty.
\]
\begin{remark}
The case $b>1$ corresponds to an impurity $\o=\D$ which have a repulsive effect on the singularities.
\end{remark}
\subsubsection{The case $0<b<1$, $N\geq2$ and ${\bf d}\in\N^N$}

This situation is the most challenging. 

Note that with the help of \cite{Publi3} we may obtain the existence of minimizers for $W^{\rm micro}(\cdot,{\bf d})$ with $d_i=1$ for $i\in\{1,...,N\}$, $N\in\N^*$. But \cite{Publi3} does not give any information on the location of minimizers and for other configurations of degrees.

From technical issues, we restrict the study to $N=2$ and $p,q\in\N^*$. Note that the case $p,q<0$ is obviously symmetric.

We are going to prove that there exist minimizers and there are unique up to a rotation [see \eqref{ExprSComplique}$\&$\eqref{ExprSCompliqueComplete}].

We may assume $p\leq q$. For $z_1,z_2\in\D$ we have, writing $({\bf z},{\bf d})=((z_1,p),(z_2,q))$
\begin{eqnarray*}
\dfrac{W^{\rm micro}({\bf z},{\bf d})}{-b^2\pi}&=&2pq\ln|z_1-z_2|+\dfrac{1-b^2}{1+b^2}\left[p^2\ln(1-|z_1|^2)+q^2\ln(1-|z_2|^2)+2pq\ln|1-z_1\overline{z_2}|\right].
\end{eqnarray*}

We let:
\begin{itemize}
\item[$\bullet$] $\B:=\dfrac{1-b^2}{1+b^2}$ and $\A:=\dfrac{p}{q}\leq1$;
\item[$\bullet$] $f(z_1,z_2)=2\ln|z_1-z_2|+\B\left[\A\ln(1-|z_1|^2)+\A^{-1}\ln(1-|z_2|^2)+2\ln|1-z_1\overline{z_2}|\right]$.
\end{itemize}
Note that $W^{\rm micro}[(z_1,z_2),(p,q)]=-b^2pq\pi f(z_1,z_2)$. Consequently, in order to study minimizing points of $W^{\rm micro}[\cdot,(p,q)]$, we have to maximize $f(\cdot)$.

Since $z_1\neq z_2$ and since for $t\in\R$ we have $f(z_1,z_2)=f(z_1\e^{\imath t},z_2\e^{\imath t})$, we may assume that  $z_1=s\geq0$. We thus have for $z_2=\rho\e^{\imath\theta}$ [$0\leq\rho<1,\theta\in\R$]
\[
f(s,\rho\e^{\imath\theta})=\ln\left[s^2+\rho^2-2s\rho\cos\theta\right]+\B\left[\A\ln(1-s^2)+\A^{-1}\ln(1-\rho^2)+\ln(1+s^2\rho^2-2s\rho\cos\theta)\right].
\]
We first claim that if $s=0$ then $\rho>0$ and for $\v>0$ we have
\[
f(\v,-\rho)=f(0,\rho\e^{\imath\theta})+\v({\rho}^{-1}+2\beta\rho)+\mathcal{O}(\v^2).
\]
Consequently, for $\v>0$ sufficiently small we have $f(\v,-\rho)>f(0,\rho\e^{\imath\theta})$. Therefore, if $(s,\rho\e^{\imath\theta})$ maximizes $f$, then $s\in(0;1)$. Using a similar argument, we may prove that for $s>0$, if $(s,\rho\e^{\imath\theta})$ maximizes $f$, then $\rho\in(0;1)$.

On the other hand, from direct checking, for $s,\rho>0$, the map  $\theta\in[0,2\pi]\mapsto f(s,\rho\e^{\imath\theta})$  is maximal if and only if $\theta=\pi$.
 
 Consequently, we focus on the map 
 \[
 \begin{array}{cccc}
 g:&(0;1)^2&\to&\R\\&(s,t)&\mapsto&f(s,-t)=2\ln\left(s+t\right)+\B\left[\A\ln(1-s^2)+\A^{-1}\ln(1-t^2)+2\ln(1+st)\right]
 \end{array}.
 \]
 We first look for critical points of $g$:
\begin{eqnarray}\nonumber
\n g(s,t)={\bf 0}&\Leftrightarrow&\begin{cases}\displaystyle\dfrac{1}{s+t}+\B\left(\dfrac{-\A s}{1-s^2}+\dfrac{t}{1+st}\right)=0\\\displaystyle\dfrac{1}{s+t}+\B\left(\dfrac{-\A^{-1}t}{1-t^2}+\dfrac{s}{1+st}\right)=0\end{cases}
\\\label{SystCrPoint1}
&\Leftrightarrow&\begin{cases}\displaystyle(1-s^2)(1+st)+\B\left[-\A s(1+st)(s+t)+{t}(1-s^2)(s+t)\right]=0\\\displaystyle(1-t^2)(1+st)+\B\left[-\A^{-1} t(1+st)(s+t)+{s}(1-t^2)(s+t)\right]=0\end{cases}
\end{eqnarray}
By considering the difference of both lines in \eqref{SystCrPoint1} we get:
\begin{eqnarray}\nonumber
&&(t^2-s^2)(1+st)+\B\left[(\A^{-1} t-\A s)(1+st)(s+t)+(t-s^2t-s+st^2)(s+t)\right]=0
\\\nonumber
&\Longleftrightarrow&(1+st)(s+t)\left[t-s+\B((\A^{-1}+1) t-(\A+1) s)\right]=0
\\\nonumber
&\stackrel{[s,t>0]}{\Longleftrightarrow}&[1+\B(\A^{-1}+1)] t-[1+\B(\A+1)] s=0
\\\label{SystCrPoint2}
&{\Longleftrightarrow}&t=\lambda s\text{ with }\lambda:=\frac{1+\B(\A+1)}{1+\B(\A^{-1}+1)}.
\end{eqnarray} 
\begin{remark}
It is important to note that $0<\lambda\leq1$. Moreover $\lambda=1$ if and only if $p=q$.
\end{remark}
Using \eqref{SystCrPoint2} in the first line of  \eqref{SystCrPoint1} we have
\begin{equation}\label{LasetEqkjb}
(1-s^2)(1+\lambda s^2)+\B\left[-\A s^2(1+\lambda s^2)(1+\lambda)+{\lambda s^2}(1-s^2)(1+\lambda)\right]=0.
\end{equation}
Thus, letting $\sigma=s^2$, we get the following equation:
\begin{equation}\label{PrePreExprSComplique}
[\lambda+(\A+1)\B\lambda(1+\lambda)]\sigma^2+[1-\lambda+(\A-\lambda)\B(1+\lambda)]\sigma-1=0
\end{equation}
We let
\[
\Delta:=[1-\lambda+(\A-\lambda)\B(1+\lambda)]^2+4[\lambda+(\A+1)\B\lambda(1+\lambda)].
\]
Note that $\Delta>0$ and $\sqrt\Delta>1-\lambda+(\A-\lambda)\B(1+\lambda)$.

We obtain immediately that
\begin{equation}\label{PreExprSComplique}
\sigma_0=\dfrac{-[1-\lambda+(\A-\lambda)\B(1+\lambda)]+\sqrt{\Delta}}{2[\lambda+(\A+1)\B\lambda(1+\lambda)]}
\end{equation}
is the unique positive solution of \eqref{PrePreExprSComplique}.

Consequently
\begin{equation}\label{ExprSComplique}
s_0=\sqrt{\dfrac{-[1-\lambda+(\A-\lambda)\B(1+\lambda)]+\sqrt{\Delta}}{2[\lambda+(\A+1)\B\lambda(1+\lambda)]}}
\end{equation}
is the unique positive solution of \eqref{LasetEqkjb}.

In order to prove that $(s_0,-\lambda s_0)\in(\D^2)^*$, since $0<\lambda\leq1$ and $s_0=\sqrt{\sigma_0}$, it suffices to check that the positive roots $\sigma_0$ given in \eqref{PreExprSComplique} satisfies $\sigma_0<1$. To this end we let $P$ be the quadratic polynomial function expresses in the LHS of \eqref{PrePreExprSComplique} with variable $\sigma$. With direct computations we get $P(0)=-1<0$ and $P(1)=\B(1+\lambda)^2\A>0$. Therefore the equation \eqref{PrePreExprSComplique} admits at least a solution $\tilde{\sigma}\in(0;1)$. Since $\sigma_0$ given in \eqref{PreExprSComplique} is the unique positive solution of \eqref{PrePreExprSComplique} we get $\sigma_0\in(0;1)$.

In conclusion, the set of global minimizers of $W^{\rm micro}[\cdot,(p,q)]$ is
\begin{equation}\label{ExprSCompliqueComplete}
\left\{\left(s_0\e^{\imath\theta};-\lambda s_0\e^{\imath\theta}\right)\in(\D^2)^*\,|\,\theta\in\R\right\}
\end{equation}
where $s_0$ is given by $\eqref{ExprSComplique}$ and $\lambda$ by \eqref{SystCrPoint2}.

In particular, if $p=q$ then $\A=\lambda=1$ and in this case the set of minimizers of $W^{\rm micro}[\cdot,(p,p)]$ is
\[
\left\{\left(\left(1+4\dfrac{1-b^2}{1+b^2}\right)^{-1/4}\e^{\imath\theta};-\left(1+4\dfrac{1-b^2}{1+b^2}\right)^{-1/4}\e^{\imath\theta}\right)\in(\D^2)^*\,|\,\theta\in\R\right\}.
\]
\begin{remark}
It is interesting to note that if $((z_1,z_2),(p,q))\in(\D^2)^*\times(\N^*)^2$ is a minimizers for $W^{\rm micro}$, then we have:
\[
|z_1|\leq|z_2|\Longleftrightarrow p\geq q
\]
and
\[
|z_1|=|z_2|\Longleftrightarrow p= q.
\]
\end{remark}

\appendix
\section{Proof of Lemma \ref{AhhAhhAhh1}}\label{SecProofAsyBeha}

The key ingredient to get Lemma \ref{AhhAhhAhh1} is Proposition C.4 in \cite{Publi4}. For the convenience of the reader we state this proposition:
\begin{prop}\label{PropCited}[Proposition C.4 in \cite{Publi4}]

Let $\alpha\in L^\infty(\R^2,[B^2;B^{-2}])$ and $R>r>0$ we denote:
\begin{itemize}
\item[$\bullet$] $\Ring:=B_R\setminus \overline{B_r}$,
\item[$\bullet$] $\di \mu^{\rm Dir}(\Ring):=\inf\left\{\dfrac{1}{2}\int_{\Ring}\alpha|\n w|^2\,\left|\begin{array}{c}w\in H^1(\Ring,\S^1)\text{ s.t. },\,w(r\e^{\imath\theta})=\e^{\imath\theta},\\ w(R\e^{\imath\theta})=\e^{\imath(\theta+\theta_0)},\,\theta_0\in\R\end{array}\right.\right\}$,
\item[$\bullet$] $\di\mu(\Ring):=\inf\left\{\dfrac{1}{2}\int_{\Ring}\alpha|\n w|^2\,\left|\begin{array}{c}w\in H^1(\Ring,\S^1)\\\text{ s.t. }\deg(w)=1\end{array}\right.\right\}$.
\end{itemize}
There exists a constant $C_B$ depending only on $B$ s.t.
\[
\mu(\Ring)\leq\mu^{\rm Dir}(\Ring)\leq\mu(\Ring)+C_B.
\]
\end{prop}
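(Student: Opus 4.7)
The first inequality $\mu(\Ring)\le\mu^{\rm Dir}(\Ring)$ is immediate: any competitor $w$ for $\mu^{\rm Dir}(\Ring)$ satisfies $w(r\e^{\imath\theta})=\e^{\imath\theta}$, which has degree one on $\partial B_r$, and is therefore admissible for $\mu(\Ring)$. The content of the proposition lies in the converse $\mu^{\rm Dir}(\Ring)\le\mu(\Ring)+C_B$, which asks to upgrade a degree-constrained minimizer to a Dirichlet competitor at a cost depending only on $B$.

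My plan is to take a minimizer $u_\star$ of $\mu(\Ring)$ (existing by Proposition \ref{PropExistResultMainTypPB}) and perturb it only on two thin collars of fixed aspect ratio, $C_r:=B_{2r}\setminus\overline{B_r}$ and $C_R:=B_R\setminus\overline{B_{R/2}}$. Since $\deg(u_\star)=1$, Proposition \ref{PropRappelDeg}.\ref{PropRappelDeg5} yields $u_\star=(x/|x|)\e^{\imath\Psi}$ with $\Psi\in H^1(\Ring,\R)$; the phase solves the linear Neumann problem $-\Div(\alpha\n(\theta+\Psi))=0$ in $\Ring$ with $\p_\nu(\theta+\Psi)=0$ on $\partial\Ring$. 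On $C_r$ I would redefine the phase as the $\alpha$-harmonic function with Dirichlet data $\tr_{\partial B_{2r}}\Psi$ on $\partial B_{2r}$ and $0$ on $\partial B_r$; symmetrically on $C_R$, with Dirichlet value the constant $\theta_0:=\mint_{\partial B_{R/2}}\Psi$ on $\partial B_R$. The resulting map equals $x/|x|$ on $\partial B_r$ and $(x/|x|)\e^{\imath\theta_0}$ on $\partial B_R$, hence is admissible for $\mu^{\rm Dir}(\Ring)$.

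By the Dirichlet principle applied on each collar separately, the excess energy $\mu^{\rm Dir}(\Ring)-\mu(\Ring)$ is bounded by the sum of the two weighted extension energies produced by killing the phase on $\partial B_r$ and forcing it to a constant on $\partial B_R$. On a fixed-ratio annulus this quantity is controlled, via standard trace theory with weight $\alpha\in[B^2,B^{-2}]$, by the two squared $H^{1/2}$-seminorms $|\Psi-\mint_{\partial B_{2r}}\Psi|_{H^{1/2}(\partial B_{2r})}^2$ and $|\Psi-\mint_{\partial B_{R/2}}\Psi|_{H^{1/2}(\partial B_{R/2})}^2$, with constants depending only on $B$.

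The \emph{main obstacle} is to bound these two $H^{1/2}$-seminorms uniformly in $R,r$: this is the entire point of the proposition, since the naive global estimate $\int_{\Ring}\alpha|\n\Psi|^2\le\int_{\Ring}\alpha|\n\theta|^2\le 2\pi B^{-2}\ln(R/r)$ (obtained by testing the Euler--Lagrange equation against $\Psi$ and using Cauchy--Schwarz) grows with $\ln(R/r)$ and is far too weak. My approach would combine two ingredients: a mean-value argument on the logarithmic scale, which extracts from a neighborhood of $\partial B_r$ (resp.\ $\partial B_R$) a subannulus of fixed aspect ratio on which $\int\alpha|\n\Psi|^2$ is bounded independently of $R,r$; and De~Giorgi--Nash--Moser regularity for the $\alpha$-harmonic equation with $L^\infty$ coefficients in $[B^2,B^{-2}]$, together with the Neumann boundary data on $\partial B_r$ and $\partial B_R$ (allowing a reflection extension across these circles), which propagates the energy bound up to the boundary and yields an oscillation bound for $\Psi$ on $C_r$ and $C_R$ depending only on $B$. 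A Poincar\'e-type inequality on such an annulus then upgrades this oscillation bound to the sought $H^{1/2}$-seminorm bound, closing the argument.
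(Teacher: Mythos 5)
You should first be aware that the paper contains no proof of this statement: it is imported verbatim (up to replacing $[b^2;1]$ by $[B^2;B^{-2}]$) from Proposition C.4 of \cite{Publi4}, so there is no internal argument to compare yours with. Your first inequality is correct, and so is the reduction of $\mu^{\rm Dir}(\Ring)-\mu(\Ring)$ to controlling, uniformly in $R$ and $r$, the $H^{1/2}$-seminorm (modulo constants in $2\pi\Z$) of the phase $\Psi$ on circles at fixed ratio from $\p B_r$ and $\p B_R$, the surgery cost in a fixed-aspect-ratio collar being handled by the Dirichlet principle. The gap is that the step you yourself identify as the main obstacle is never actually closed, and the two tools you propose do not combine to close it.

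Concretely: a pigeonhole over the $\sim\ln(R/r)$ dyadic scales of $\Ring$ produces a subannulus $B_{2r'}\setminus\overline{B_{r'}}$ with bounded phase energy at some \emph{uncontrolled} radius $r'$, not one adjacent to $\p B_r$; if you then perform the surgery on $B_{r'}\setminus\overline{B_r}$, the energy you can guarantee for the replacement is $B^{-2}\pi\ln(r'/r)+C$ while the energy discarded is only known to be $\geq \mu(B_{r'}\setminus\overline{B_r})\geq B^{2}\pi\ln(r'/r)$, so the certified cost is $(B^{-2}-B^{2})\pi\ln(r'/r)+C$, which is unbounded. To run the pigeonhole inside the fixed collar $B_{2r}\setminus\overline{B_r}$ instead, you need the a priori bound $\int_{B_{2r}\setminus\overline{B_r}}\alpha|\n u_\star|^2\leq C_B$; since $u_\star$ restricted to $B_R\setminus\overline{B_{2r}}$ is admissible there, this amounts to the almost-superadditivity $\mu(B_R\setminus\overline{B_r})\leq\mu(B_R\setminus\overline{B_{2r}})+C_B$, which is essentially the proposition itself restated and is established nowhere in your argument. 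De Giorgi--Nash--Moser does not rescue this: it turns a local energy bound into a H\"older/oscillation bound, so it needs the missing collar energy bound as input rather than producing it; and even granting an $L^\infty$-oscillation bound on $\Psi$ in the collar, no ``Poincar\'e-type inequality'' passes from $L^\infty$ to $H^{1/2}$ (a bounded but rapidly oscillating trace has arbitrarily large $H^{1/2}$-seminorm), so the final upgrade step is also invalid. The uniform boundary energy estimate for the phase of the degree-constrained minimizer is the real content of Proposition C.4 in \cite{Publi4}, and it requires an argument (for instance via the conjugate function and the $\alpha^{-1}$-capacitary potential) that your sketch does not supply.
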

\begin{remark}
In \cite{Publi4}, Proposition C.4, was initially stated for $\tilde\alpha\in L^\infty(\R^2,[b^2;1])$ and $b\in(0;1)$. Some obvious modifications allow to get the aforementioned formulation.
\end{remark}
Lemma \ref{AhhAhhAhh1} is equivalent to
\begin{equation}\label{AhhAhhAhh2}
\dfrac{1}{2}\int_{\O_R}\alpha|\n u_{R'}|^2-\dfrac{1}{2}\int_{\O_R}\alpha|\n u_R|^2\leq C_{B,\o}.
\end{equation}
Recall that $R_0:=\max\{1;10^2\cdot{\rm diam}({\o})\}$, thus $\overline{\o}\subset B_{R_0}$.

We let 
\begin{equation}\label{FirstContAhAH}
C_\o:=\dfrac{1}{2}\int_{B_{R_0}\setminus\overline{\o}}\left|\n \left(\frac{x}{|x|}\right)\right|^2.
\end{equation}
It is obvious that we have:
\[
\dfrac{1}{2}\int_{\O_{R'}}\alpha|\n u_{R'}|^2\geq\mu(B_{R'}\setminus\overline{B_R})+\mu(B_{R}\setminus\overline{B_{R_0}}).
\]
Using Proposition \ref{PropCited} we have:
\[
\dfrac{1}{2}\int_{\O_{R'}}\alpha|\n u_{R'}|^2\geq\mu^{\rm Dir}(B_{R'}\setminus\overline{B_R})+\mu^{\rm Dir}(B_{R}\setminus\overline{B_{R_0}})-2C_B.
\]
It is easy to check, {\it e.g.} using the direct method of minimization, that the minima $\mu^{\rm Dir}(B_{R'}\setminus\overline{B_R})$ and $\mu^{\rm Dir}(B_{R}\setminus\overline{B_{R_0}}) $ are reached. Let $u_1$ [resp. $u_2$] be a minimizer of $\mu^{\rm Dir}(B_{R'}\setminus\overline{B_R})$ [resp. $\mu^{\rm Dir}(B_{R}\setminus\overline{B_{R_0}})$].

Up to multiply $u_1$ by a constant rotation we may assume $\tr_{\p B_R}u_1=\tr_{\p B_R}u_2$.

We are now in position to define
\[
u=\begin{cases}
u_1&\text{in }B_{R'}\setminus\overline{B_R}\\u_2&\text{in }B_{R}\setminus\overline{B_{R_0}}\\\dfrac{x}{|x|}&\text{in }B_{R_0}\setminus\overline{\o} 
\end{cases}.
\]
It is clear that $u\in H^1(\O_{R'},\S^1)$ and $\deg(u)=1$. Consequently
\begin{eqnarray*}
\dfrac{1}{2}\int_{\O_{R'}}\alpha|\n u_{R'}|^2&\leq&\dfrac{1}{2}\int_{\O_{R'}}\alpha|\n u|^2
\\&=&\mu^{\rm Dir}(B_{R'}\setminus\overline{B_R})+\mu^{\rm Dir}(B_{R}\setminus\overline{B_{R_0}})+\dfrac{1}{2}\int_{B_{R_0}\setminus\overline{\o}}\alpha\left|\n \left(\frac{x}{|x|}\right)\right|^2
\\\text{[Prop. \ref{PropCited}$\&$ Eq. \eqref{FirstContAhAH}]}&\leq&\mu(B_{R'}\setminus\overline{B_R})+\mu(B_{R}\setminus\overline{B_{R_0}})+2C_B+B^{-2}C_\o.
\end{eqnarray*}
Since $\di \mu(B_{R'}\setminus\overline{B_R})\leq \frac{1}{2}\int_{B_{R'}\setminus\overline{B_R}}\alpha|\n u_{R'}|^2$ and $\mu(B_{R}\setminus\overline{B_{R_0}})\leq\di\frac{1}{2}\int_{\O_R}\alpha|\n u_{R}|^2$ we obtain:
\[
\dfrac{1}{2}\int_{\O_R}\alpha|\n u_{R'}|^2\leq\frac{1}{2}\int_{\O_R}\alpha|\n u_{R}|^2+2C_B+B^{-2}C_\o.
\]
Letting $C_{B,\o}:=2C_B+B^{-2}C_\o$ the above inequality is exactly \eqref{AhhAhhAhh2}.\\

\noindent{\bf Acknowledgements.} 
The author would like to thank Petru Mironescu for fruitful discussions.

\bibliographystyle{amsalpha}
\bibliography{bibHomRen}
\end{document}